\newcommand{\spacebreak}
{\begin{displaymath} \triangleleft \; \lhd \;
\diamond \; \rhd \; \triangleright
  \end{displaymath}}
\def\section{\@startsection{section}{1}%
 \z@{.7\linespacing\@plus\linespacing}{.5\linespacing}%
 {\normalfont\bfseries\scshape\centering}}
\def\subsection{\@startsection{subsection}{2}%
  \z@{.5\linespacing\@plus\linespacing}{.5\linespacing}%
  {\normalfont\bfseries\scshape}}
\def\subsubsection{\@startsection{subsubsection}{3}%
 \z@{.5\linespacing\@plus\linespacing}{-.5em}
 {\normalfont\bfseries}}
\newtheorem{theorem}{Theorem}
\newtheorem{lemma}[theorem]{Lemma}
\newtheorem{prop}[theorem]{Proposition}
\newtheorem{obs}[theorem]{Observation}
\newtheorem{definition}[theorem]{Definition}
\theoremstyle{definition}
\newtheorem{example}[theorem]{Example}
\let\set\mathbb
\renewcommand{\epsilon}{\varepsilon}
\renewcommand{\iota}{\phi}
\newcommand{\bone}{\bar 1}
\def\testb#1{\testb@i#1,,\@nil}%
\def\testb@i#1,#2,#3\@nil{%
  \draw[->, thick] (O) --++(#1);
  \ifx\relax#2\relax\else\testb@i#2,#3\@nil\fi}
\newcommand{\makediag}[1]{
    \coordinate (O) at (0,0); \coordinate (N) at (0,0.8);
    \coordinate (NE) at (0.8,0.8); \coordinate (E) at (0.8,0);
    \coordinate (SE) at (0.8,-0.8); \coordinate (S) at (0,-0.8);
    \coordinate (SW) at (-0.8,-0.8);\coordinate (W) at (-0.8,0);
    \coordinate (NW) at (-0.8,0.8); \coordinate (B1) at (1.2,1.2);
    \coordinate (B2) at (-1.2,-1.2);
    \testb{#1}
} 
\newcommand{\diagr}[1]{
  \begin{tikzpicture}[scale=0.8]\makediag{#1}\end{tikzpicture}
}
\def\clap#1{\hbox to0pt{\hss#1\hss}}
\def\Stepset#1#2#3#4#5#6#7#8#9{%
  \raisebox{-9pt}{%
  \setlength{\unitlength}{.9pt}%
  \begin{picture}(20,20)(-10,-10)
    \put(-5,-5){\clap{$\scriptstyle\ifx1#1\bullet\else\cdot\fi$}}
    \put(0,-5){\clap{$\scriptstyle\ifx1#2\bullet\else\cdot\fi$}}
    \put(5,-5){\clap{$\scriptstyle\ifx1#3\bullet\else\cdot\fi$}}
    \put(-5,0){\clap{$\scriptstyle\ifx1#4\bullet\else\cdot\fi$}}
    \put(0,0){\clap{$\scriptstyle\ifx1#5\bullet\else\cdot\fi$}}
    \put(5,0){\clap{$\scriptstyle\ifx1#6\bullet\else\cdot\fi$}}
    \put(-5,5){\clap{$\scriptstyle\ifx1#7\bullet\else\cdot\fi$}}
    \put(0,5){\clap{$\scriptstyle\ifx1#8\bullet\else\cdot\fi$}}
    \put(5,5){\clap{$\scriptstyle\ifx1#9\bullet\else\cdot\fi$}}
  \end{picture}}\StepsetB}
\def\StepsetB#1#2#3#4#5#6#7#8{%
  \raisebox{-8pt}{%
  \setlength{\unitlength}{.9pt}%
  \begin{picture}(20,20)(-10,-10)
    \put(-5,-5){\clap{$\scriptstyle\ifx1#1\bullet\else\cdot\fi$}}
    \put(0,-5){\clap{$\scriptstyle\ifx1#2\bullet\else\cdot\fi$}}
    \put(5,-5){\clap{$\scriptstyle\ifx1#3\bullet\else\cdot\fi$}}
    \put(-5,0){\clap{$\scriptstyle\ifx1#4\bullet\else\cdot\fi$}}
    \put(5,0){\clap{$\scriptstyle\ifx1#5\bullet\else\cdot\fi$}}
    \put(-5,5){\clap{$\scriptstyle\ifx1#6\bullet\else\cdot\fi$}}
    \put(0,5){\clap{$\scriptstyle\ifx1#7\bullet\else\cdot\fi$}}
    \put(5,5){\clap{$\scriptstyle\ifx1#8\bullet\else\cdot\fi$}}
  \end{picture}}\StepsetC}
\def\StepsetC#1#2#3#4#5#6#7#8#9{%
  \raisebox{-7pt}{%
  \setlength{\unitlength}{.9pt}%
  \begin{picture}(20,20)(-10,-10)
    \put(-5,-5){\clap{$\scriptstyle\ifx1#1\bullet\else\cdot\fi$}}
    \put(0,-5){\clap{$\scriptstyle\ifx1#2\bullet\else\cdot\fi$}}
    \put(5,-5){\clap{$\scriptstyle\ifx1#3\bullet\else\cdot\fi$}}
    \put(-5,0){\clap{$\scriptstyle\ifx1#4\bullet\else\cdot\fi$}}
    \put(0,0){\clap{$\scriptstyle\ifx1#5\bullet\else\cdot\fi$}}
    \put(5,0){\clap{$\scriptstyle\ifx1#6\bullet\else\cdot\fi$}}
    \put(-5,5){\clap{$\scriptstyle\ifx1#7\bullet\else\cdot\fi$}}
    \put(0,5){\clap{$\scriptstyle\ifx1#8\bullet\else\cdot\fi$}}
    \put(5,5){\clap{$\scriptstyle\ifx1#9\bullet\else\cdot\fi$}}
  \end{picture}}}
\def\sequenceThreeD#1#2#3#4{%
  #2\quad\hbox to.6\hsize{$#3,\dots$\hfill}\quad\textrm{(#4)}
}
\long\def\greybox#1{%
    \newbox\contentbox%
    \newbox\bkgdbox%
    \setbox\contentbox\hbox to \hsize{%
        \vtop{
            \kern\columnsep
            \hbox to \hsize{%
                \kern\columnsep%
                \advance\hsize by -2\columnsep%
                \setlength{\textwidth}{\hsize}%
                \vbox{
                    \parindent=0bp
                    #1
                }%
                \kern\columnsep%
            }%
            \kern\columnsep%
        }%
    }%
    \setbox\bkgdbox\vbox{
        \pdfliteral{0.9 0.9 0.9 rg}
        \hrule width  \wd\contentbox %
               height \ht\contentbox %
               depth  \dp\contentbox
        \pdfliteral{0 0 0 rg}
    }%
    \wd\bkgdbox=0bp%
    \vbox{\hbox to \hsize{\box\bkgdbox\box\contentbox}}%
}
\newcommand{\ns}{\mathbb{N}}
\newcommand{\zs}{\mathbb{Z}}
\newcommand{\qs}{\mathbb{Q}}
\newcommand{\fps}{formal power series}
\newcommand{\bx}{\bar x}
\newcommand{\by}{\bar y}
\newcommand{\bY}{\bar Y}
\newcommand{\bz}{\bar z}
\newcommand{\bX}{\bar X}
\newcommand{\bv}{\bar v}
\newcommand{\cC}{\mathcal C}
\newcommand{\cS}{\mathcal S}
\newcommand{\cU}{\mathcal U}
\newcommand{\cV}{\mathcal V}
\newcommand{\cW}{\mathcal W}
\newcommand{\cT}{\mathcal T}
\newcommand{\Sn}{\mathfrak S}
\DeclareMathOperator{\id}{id}
\newcommand{\p}{permutation}
\def\emm#1,{{\em #1}}
\newcommand{\si}{\sigma}
\def\hoeij#1{{#1}}
\begin{document}
\title[Walks confined to the positive octant]
{On 3-dimensional lattice walks\\
confined to the positive octant}

\author[A. Bostan]{Alin Bostan}

\author[M. Bousquet-M\'elou]{Mireille Bousquet-M\'elou}

\author[M. Kauers]{Manuel Kauers}

\author[S. Melczer]{Stephen Melczer}

\thanks{A.B. was supported by the Microsoft Research\,--\,Inria Joint Centre.}
\thanks{M.K. was supported by FWF Grants Y464-N18 and F50-04.}
\thanks{S.M. was supported by Inria, the Embassy of France in Canada, and an NSERC MSFSS}

\address{AB: INRIA Saclay,  1 rue Honoré d'Estienne d'Orves,
F-91120 Palaiseau, France} \email{Alin.Bostan@inria.fr}
 
\address{MBM: CNRS, LaBRI, Universit\'e de Bordeaux, 351 cours de la
  Lib\'eration,  F-33405 Talence Cedex, France} 
\email{bousquet@labri.fr}

\address{MK: RISC, Johannes Kepler Universität,
Altenbergerstraße 69, A-4040 Linz, Austria} 
\email{manuel@kauers.de}

\address{SM: Cheriton School of Computer Science, University of Waterloo, Waterloo ON
Canada  \&  U. Lyon, CNRS, ENS de Lyon, Inria, UCBL, Laboratoire LIP}
\email{smelczer@uwaterloo.ca}

\begin{abstract}

Many recent papers deal with the enumeration of 2-dimensional walks with
prescribed steps confined to the positive quadrant. The classification is now
complete for walks with steps in $\{0, \pm 1\}^2$: the generating function is
D-finite if and only if a certain group associated with the step set is
finite.

We explore in this paper the analogous problem for 3-dimensional walks
confined to the positive octant. The first difficulty is their number: we have
to examine no less than 11\,074\,225 step sets in $\{0, \pm 1\}^3$ (instead of
79 in the quadrant case). We focus on the 35\,548 that have at most six steps.

We apply to them a combined approach, first experimental and then rigorous. On
the experimental side, we try to guess differential equations. We also try to
determine if the associated group is finite. The largest finite groups that we
find have order 48 -- the larger ones have order at least 200 and we believe
them to be infinite. No differential equation has been detected in those
cases.

On the rigorous side, we apply three main techniques to prove D-finiteness.
The algebraic kernel method, applied earlier to quadrant walks, works in many
cases. Certain, more challenging, cases turn out to have a special \emm
Hadamard structure, which allows us to solve them via a reduction to problems
of smaller dimension. Finally, for two special cases, we had to resort to
computer algebra proofs. We prove with these techniques all the guessed
differential equations.

This leaves us with exactly 19 very intriguing step sets for which the group
is finite, but the nature of the generating function still unclear.
\end{abstract}

\keywords{Lattice walks --- Exact enumeration --- D-finite series}
\maketitle

\section{Introduction}

The enumeration of lattice walks is a venerable topic in
combinatorics, which has numerous applications as well as connections
with other mathematical fields such as probability theory.
In recent years the enumeration of walks confined to cones has received a
lot of attention, and the present article also explores this topic.

Let us recall a few results for walks on $\zs^d$ that start at the origin
and consist of steps taken in~$\cS$, a finite subset of $\zs^d$. Clearly, there are $|\cS|^n$ 
walks of \emm length, $n$ (that is, using exactly $n$ steps), and the
associated (length) generating function,
$$
\sum_{w} t^{|w|}= \sum_{n\ge 0} |\cS|^nt^n =\frac 1{1-|\cS|t},
$$
is rational. Here, the sum runs over all walks~$w$, and $|w|$ denotes the
length of $w$.

If we now confine walks to a (rational) half-space, typically by enforcing the
first coordinate to be non-negative, then the resulting generating function
becomes algebraic. This question was notably studied in one dimension (walks
on the half-line
$\ns$)~\cite{gessel-factorization,Duchon98,bousquet-petkovsek-1,BaFl02}, but
most methods can be extended to arbitrary dimension. These methods are
effective and yield an explicit system of algebraic equations for the
generating function.

In the past few years, the ``next'' natural case, namely walks confined to the
intersection of two half-spaces, has been extensively studied in the form of
walks on $\zs^2$ confined to the positive quadrant
$\ns^2$~\cite{BoKa09,BoRaSa12,Bous05, BoMi10, FaIaMa99, FaRa10, KuRa11,
MiRe09}. We call such walks \emm quadrant walks., In the early days of this
study, it was sometimes believed, from the inspection of examples, that the
associated generating function was always \emm D-finite,, that is, satisfied a
linear differential equation with polynomial coefficients. This is now known
to be wrong, even for sets of \emm small, steps, that is, sets included in
$\{\bar 1 ,0,1\}^2$, with $\bar1=-1$. For example, the step set
$\cS=\{(1,1),(\bar 1 ,0),(0,\bar 1 )\}$ is associated with a D-finite (and
even algebraic) generating
function~\cite{gessel-proba,bousquet-versailles,Bous05}, but D-finiteness is
lost if the step $(\bone,\bone)$ is added~\cite{BoRaSa12}.

In fact, Bousquet-M\'elou and Mishna~\cite{BoMi10} observed that for quadrant
walks with small steps, the nature of the generating function seemed to be
correlated to the finiteness of a certain group of bi-rational transformations
of the $xy$-plane associated with $\cS$. Of the 79 non-equivalent step sets
(or: \emm models,\/) under consideration, they proved that exactly 23 sets
were associated with a finite group. Among them, they proved that 22 models
admitted D-finite generating functions. The 23rd model with a finite group was
proven D-finite (in fact, algebraic) by Bostan and Kauers~\cite{BoKa10}. These
D-finiteness results extend to the trivariate generating functions that keep
track of the length and the position of the endpoint. For 51 of the 56 models
with an infinite group, Kurkova and Raschel~\cite{KuRa11} proved that this
trivariate generating function is not D-finite, and Bostan \emph{et
al.}~\cite{BoRaSa12} proved that the (length) generating functions for walks
returning to the origin is not D-finite. The nature of the length generating
function\ for \emm all, quadrant walks is still unknown in those 51 cases.
The remaining 5 models were proven to have non-D-finite length generating
functions by Mishna and Rechnitzer~\cite{MiRe09} and Melczer and
Mishna~\cite{MeMi13}. 

\medskip
It is now natural to move one dimension higher, and to study  walks 
confined to the non-negative octant $\ns^3$. A similar group of
rational transformations can be defined: does the  satisfactory
dichotomy observed for quadrant walks (finite
group $\Leftrightarrow$ D-finite series) persist? Here, much less is
known. Beyond a few explicit examples, all we have so 
far is an empirical classification by Bostan and Kauers~\cite{BoKa09} of the
83\,682 (possibly trivial, possibly equivalent) models
$\mathcal{S}\subseteq\{\bar 1 ,0,1\}^3\setminus\{(0,0,0)\}$ with at most 
five steps. However, their paper does not discuss the associated group. 

The aim of the present article is to initiate a systematic study of octant
walks, their groups and their generating functions. We start from the 313\,912
models with at most six steps, and apply various methods in order to identify
provably and/or probably D-finite cases as well as probably non-D-finite
cases. All our strategies for proving D-finiteness are illustrated by
examples, and have been implemented. The software that accompanies this paper
can be found on \href{http://www.risc.jku.at/people/mkauers/bobokame}{Manuel
Kauers' website}.

We now give an overview of the paper. First, we discard models that are in
fact half-space models (and are thus algebraic), as well as models that are
naturally in bijection to others (Section~\ref{sec:IsoRed}). This leaves us
with (only) 35\,548 models with at most six steps. Some of them are equivalent
to a model of walks confined to the intersection of two half-spaces: we say
they are \emm two-dimensional,. The remaining ones are said to be \emm
three-dimensional,. The group of a model is defined at the end of
Section~\ref{sec:IsoRed}.

In Section~\ref{sec:summary} we describe our experimental attempts to decide,
for a given model, if the group is finite and if the generating function is
D-finite. By the end of the paper we will have proven all our D-finiteness
conjectures.

In Section~\ref{sec:kernel} we present the so-called \emm algebraic kernel
method,, a powerful tool that proves D-finiteness of many models with a finite
group. This is a natural extension of the method applied in~\cite{BoMi10} to
quadrant walks.

In Section~\ref{sec:hadamard} we show how the enumeration of walks of a \emm
Hadamard model, reduces to the enumeration of certain quadrant walks. Together
with the algebraic kernel method, this proves all our D-finiteness conjectures
for three-dimensional models (Section~\ref{sec:ThreeDim}). There remain 19
three-dimensional models with a finite group that may not be D-finite. Among
them, a striking example is the 3D analogue of Kreweras' quadrant
model~\cite{Bous05}.

In Section~\ref{sec:TwoDim} we discuss 2-dimensional octant models. To
simplify the discussion we only study their projection on the relevant
quadrant. This includes the ordinary quadrant models studied earlier, but also
models that have several copies of the same step. For all models with a finite
group, we prove the D-finiteness of the generating function. In two cases,
the only proofs we found are based on computer algebra. They are described in
Section~\ref{sec:computer}.

We conclude in Section~\ref{sec:final} with some comments and questions.

\section{Preliminaries}
\label{sec:IsoRed}

Let $\cS$ be a subset of $\{\bar 1 ,0,1\}^3\setminus\{(0,0,0)\}$, which we
think of as a set of \emm steps,, and often call \emm model,\footnote{Strictly
speaking, a model would be a step set plus a region to which walks are
confined, but the region will always be the non-negative octant in this paper,
unless specified otherwise.}. To shorten notation, we denote steps of $\zs^3$
by three-letter words: for instance, $\bone 10$ stands for the step
$(-1,1,0)$. We say that a step is $x$-positive (abbreviated as $x^+$) if its
first coordinate is 1. We define similarly $x^-$ steps, $y^+$ steps and so on.
Note that there are $2^{26}$ different models.

We define an \emm $\cS$-walk, to be any walk which starts from the origin
$(0,0,0)$ and takes its steps in $\cS$. The present focus is on $\cS$-walks
that remain in the positive octant $\ns^3$, with $\ns=\{0,1,2,\ldots\}$. We
are interested in the generating function that counts them by the length
(number of steps) and the coordinates of the endpoint: $$ O(x,y,z;t)=
\sum_{i,j,k,n \geq 0} o(i,j,k;n) x^iy^jz^kt^n, $$ where $o(i,j,k;n)$ is the
number of $n$-step walks in the octant that end at position $(i,j,k)$. The
dependence of our series on $t$ is often omitted, writing for instance
$O(x,y,z)$ instead of $O(x,y,z;t)$, and this series is called the \emm
complete, generating function of octant walks. We are particularly
interested in the \emm nature, of this series: it is \emm rational, if it can
be written as a ratio of polynomials, \emm algebraic, if there exists a
non-zero polynomial $P\in \qs[x,y,z,t,s]$ such that $P(x,y,z,t,O(x,y,z;t))=0$,
and \emm D-finite, (with respect to the variable $t$) if the vector space over
$\qs(x,y,z,t)$ spanned by the iterated derivatives $D_t^m O(x,y,z;t)$ has
finite dimension (here, $D_t$ denotes differentiation with respect to $t$).
The latter definition can be adapted to D-finiteness in several variables, for
instance $x$, $y$, $z$ and $t$: in this case we require D-finiteness with
respect to \emm each, variable separately~\cite{lipshitz-df}. Every rational
series is algebraic, and every algebraic series is D-finite.

For a ring $R$, we denote by $R[x]$ (resp.~$R[[x]]$) the ring of polynomials
(resp. formal power series) in $x$ with coefficients in $R$. If $R$ is a
field, then $R(x)$ stands for the field of rational functions in $x$. This
notation is generalised to several variables in the usual way. For instance,
$O(x,y,z;t)$ is a series of $\qs[x,y,z][[t]]$. Finally, if $F(u;t)$ is a power
series in $t$ whose coefficients are Laurent series in $u$, say, $$
F(u;t)=\sum_{n\ge0} t^n \left(\sum_{i\ge i(n)} u^i f(i;n)\right), $$ we denote
by $[u^{>0}]F(u;t)$ the \emm positive part of $F$ in $u$,: $$ [u^{>0}]F(u;t)=
\sum_{n\ge 0} t^n \left(\sum_{i>0 } u^i f(i;n)\right). $$ This series can be
obtained by taking a diagonal in a series involving one more variable:
\begin{equation}\label{diag} [u^{>0}]F(u;t) = \Delta_{s,t} \left( \frac s{1-s}
F(u/s, st)\right), \end{equation} where the (linear) diagonal operator $
\Delta_{s,t}$ is defined by $ \Delta_{s,t} (s^i t^j) = \mathbbm{1}_{i=j} t^j.$

\subsection{The dimension of a model}
\label{sec:dim}
Let $\cS$ be a model. A walk of length $n$  taking its steps in
$\mathcal{S}$ can be viewed as a word $w=w_1 w_2 \ldots w_n$ made up
of  letters  of $\cS$. For  $s\in
\cS$, let $a_s$ be the number of occurrences of $s$ in $w$ 
(also called multiplicity of $s$ in $w$).
Then $w$ ends in the positive octant if and only if 
the following three linear inequalities hold:
\begin{equation}\label{3ineq}
\sum_{s\in \cS} a_s s_x \ge 0, \quad \quad \sum_{s\in \cS} a_s s_y \ge 0,
\quad \quad \sum_{s\in \cS} a_s s_z \ge 0,  
\end{equation}
where $s=(s_x,s_y,s_z)$. Of course, the walk $w$ remains in the octant if the
multiplicities observed in each of its prefixes satisfy these inequalities.

\noindent
\begin{example}\label{ex:2D}
Take $\cS\ = \{0\bone\bar 1 , \bar 1 10, \bar 1 11,101\}$ (this is the third
model of Figure~\ref{fig:dimension}). If we write $a, b, c$ and $d$ for the
multiplicities of the four steps (taken in the above order), then the
inequalities~\eqref{3ineq} read
\[ d \geq b+c, \qquad\qquad\qquad b+c \geq a,\qquad\qquad\qquad c+d \geq a. \]
Note that if the first two inequalities hold, corresponding to a walk ending
in the intersection of two half-spaces, then the third inequality is
automatically satisfied. 
\end{example}

\begin{definition}\label{def:dim}
Let $d\in\{0,1,2,3\}$.  A model $\cS$ is said to have dimension at most
$d$ if there exist $d$ inequalities in~\eqref{3ineq} such that any
$|\cS|$-tuple $(a_s)_{s\in \cS}$ of non-negative integers satisfying
these $d$ inequalities satisfies in fact the three ones.
We define accordingly models of dimension (exactly) $d$.
\end{definition}
Examples are shown in Figure~\ref{fig:dimension}. 
\begin{figure}[bht]
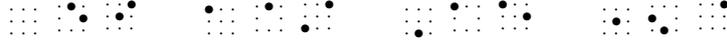

\begin{center}
\begin{tabular}{c@{\qquad}c@{\qquad}c@{\qquad}c}
\Stepset000 000 000   000 01 010 000 010 001 &
\Stepset000 000 100   000 00 010 100 000 001 &
\Stepset010 000 000   000 00 100 000 001 100 &
\Stepset00001000001010000000000001
\end{tabular}  
\end{center}
  \caption{Four-step models of respective dimension 0, 1, 2 and~3. For
  each model, the first diagram shows steps of the form $ij\bone$, the
  second shows steps $ij0$, and the third shows steps $ij1$.}
\label{fig:dimension}  
\end{figure}

It is clear that a model is 0-dimensional if and only if it is a
subset of $\{0,1\}^3\setminus\{000\}$. Such models have a rational
generating function:
$$ O(x,y,z;t)= \frac 1{1-t\sum_{s\in \cS}x^{s_x}y^{s_y}z^{s_z}}.$$
Let us also characterise models of dimension at most 1, or more precisely,
those in which the first inequality in~\eqref{3ineq} (also called the \emm
$x$-condition,) suffices to confine walks in the octant.

\begin{lemma}\label{lem:1D}
  Let  $\cS$ be a model. The  $y$- and $z$-conditions can be ignored 
when defining $\cS$-walks in the octant if and only if  the
following two conditions hold:
 \begin{enumerate}
 \item $\cS$ contains no  $y^-$ step or every   step $ijk\in \cS$ satisfies $j\ge i$,
 \item $\cS$ contains no  $z^-$ step
or every   step $ijk\in \cS$ satisfies $k\ge i$.
  \end{enumerate}
\end{lemma}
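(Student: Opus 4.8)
The plan is to recast this statement, which concerns entire walks, as a pointwise statement about multiplicity tuples, and then read off both directions from a single elementary inequality. First I would observe that a walk stays in the octant precisely when each of its prefixes does, and that the endpoint coordinates of a prefix are exactly $\sum_{s} a_s s_x$, $\sum_{s} a_s s_y$, $\sum_{s} a_s s_z$, where $(a_s)$ are the multiplicities of the prefix. Since the octant is contained in the half-space $\{x\ge 0\}$, the only content of the lemma is the reverse implication: that every walk confined to $\{x\ge 0\}$ (i.e.\ satisfying the $x$-condition of \eqref{3ineq} at each prefix) automatically satisfies the $y$- and $z$-conditions at each prefix. The whole argument then hinges on the identity $\sum_{s\in\cS} a_s s_y-\sum_{s\in\cS} a_s s_x=\sum_{s\in\cS} a_s(s_y-s_x)$, where $s_y-s_x=j-i$ for a step $s=ijk$, and its $z$-analogue.

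For the ``if'' direction I would fix a tuple $(a_s)$ of non-negative integers with $\sum_s a_s s_x\ge 0$ and show, assuming (1), that $\sum_s a_s s_y\ge 0$. Condition (1) splits into two cases, each immediate: if $\cS$ has no $y^-$ step then every $s_y\ge 0$ and the $y$-sum is non-negative outright; if instead every step $ijk$ satisfies $j\ge i$, then the identity above gives $\sum_s a_s s_y\ge\sum_s a_s s_x\ge 0$. Condition (2) controls the $z$-coordinate in exactly the same way. Applying this to the multiplicities of each prefix of a walk confined to $\{x\ge 0\}$ shows that such a walk never leaves the octant, so the $y$- and $z$-conditions are indeed redundant.

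For the ``only if'' direction I would argue by contraposition and exhibit an explicit short walk that stays in $\{x\ge 0\}$ yet leaves the octant through a negative $y$-coordinate. If (1) fails there is a $y^-$ step and a step $s''$ with $s''_x>s''_y$. If some $y^-$ step $\tilde s$ has $\tilde s_x\ge 0$, the single-step walk $\tilde s$ already suffices: its only prefix has $x=\tilde s_x\ge 0$ but $y=-1$. The genuine difficulty is the remaining case, in which every $y^-$ step has $x$-coordinate $\bone$, so that one cannot lower $y$ without simultaneously pushing $x$ below $0$, and no single step can witness the failure. Here I would use $s''$: since $s''_x>s''_y\ge-1$ forces $s''_x\ge 0$, and $s''_x=0$ would make $s''$ itself a $y^-$ step with non-negative $x$ (excluded in this case), we must have $s''_x=1$; the length-two walk taking $s''$ first and a $y^-$ step second stays in $\{x\ge 0\}$ (with prefix $x$-coordinates $1$ then $0$) while ending at $y=s''_y-1\le-1$. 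This forces (1), and the symmetric construction for the $z$-coordinate forces (2).

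The main obstacle is exactly this last case of the ``only if'' part: handling $y^-$ steps that are simultaneously $x^-$ steps, which is precisely why the hypothesis is an ``either/or'' rather than a single clean inequality. The care it demands is in ordering the two steps so that the prefix $x$-coordinate never dips below zero—placing the $x^+$ step first—so that the counterexample walk genuinely lives in the half-space while escaping the octant.
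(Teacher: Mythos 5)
Your proof is correct, and it takes a genuinely different route from the paper's. The paper proves the ``only if'' direction by projecting octant walks onto the $xy$-plane and invoking, as a black box, the quadrant criterion from~\cite[Section~2]{BoMi10} (the $y$-condition can be ignored for a 2D step set if and only if there is no $y^-$ step or every step $ij$ satisfies $j\ge i$), with a symmetric argument giving condition (b); the converse is dismissed there as ``easy to see.'' You instead give a self-contained elementary argument: the ``if'' direction via the telescoping inequality $\sum_s a_s s_y=\sum_s a_s s_x+\sum_s a_s(s_y-s_x)\ge \sum_s a_s s_x$ applied prefix by prefix, and the ``only if'' direction via an explicit one- or two-step witness walk. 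Your case analysis is sound: when some $y^-$ step has non-negative $x$-coordinate a single step suffices, and otherwise every $y^-$ step has $x$-coordinate $\bone$, so from $s''_x>s''_y\ge -1$ you correctly force $s''_x\in\{0,1\}$, exclude $s''_x=0$ (it would make $s''$ a $y^-$ step with non-negative $x$), and order the two-step walk so the prefix $x$-coordinates are $1$ then $0$ while the final $y$-coordinate is $s''_y-1\le -1$; this two-step construction is exactly the combinatorial core hidden inside the cited 2D result, and it is the only place where the small-steps hypothesis $\cS\subset\{\bone,0,1\}^3$ is genuinely used. What each approach buys: the paper's reduction is shorter and fits its recurring theme of reducing dimension, but it leans on the external reference both for the forward direction and (implicitly) for the unproved converse; your version is independent of~\cite{BoMi10}, proves both directions in full, and in effect re-derives the quadrant criterion along the way.
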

\begin{proof}
 Let us first recall the analogous result for quadrant
 walks~\cite[Section~2]{BoMi10}:  the $y$-condition can be ignored for
 a step set $\cS \subset \{\bone,0,1\}^2$ 
if and only if $\cS$ contains no $y^-$ step,
or  every step  $ij\in \cS$ satisfies $j\ge i$.

Let us now consider a model in $\zs^3$ such that the $y$- and $z$-conditions
can be ignored. In particular, as soon as an $\cS$-walk satisfies the
$x$-condition, it satisfies the $y$-condition. By projecting walks on the
$xy$-plane, and applying the above quadrant criterion, we see that either
there are no $y^-$ steps in $\cS$, or any step $ijk\in \cS$ satisfies $j\ge
i$. This is Condition (a). A symmetric argument proves~(b). Conversely, it is
easy to see that for any model satisfying~(a) and~(b), the $y$- and
$z$-conditions can be ignored. 
\end{proof}

For models of dimension at most 1, it suffices to enforce one of the three
conditions for the other two to hold. This means that we are effectively
counting walks confined to a half-space delimited, e.g., by the plane $x=0$.
As recalled in the introduction, such walks are known to have an algebraic
generating function, and the proof of algebraicity is constructive. For these
reasons, we do not discuss further 0- or 1-dimensional models.

For 2-dimensional models, we have not found an intrinsic characterisation that
would be the counterpart of Lemma~\ref{lem:1D}. In order to determine whether
a given model is (at most) 2-dimensional, we used integer linear
programming~\cite{Sch98}, as follows. For a model $\mathcal{S}$ of cardinality
$m$ define the three linear forms $I_1,I_2,I_3$ in the $m$ variables
$(a_s)_{s\in \cS}$ by~\eqref{3ineq}. One condition, say the third, is
redundant if the non-negativity of the corresponding linear form is implied by
the non-negativity of the two others, e.g., if for all $(a_s)_{s\in
\cS}\in\set N^m$ we have $(I_1\geq0)\land( I_2\geq 0)\Rightarrow I_3\geq0$. To
check algorithmically whether this is the case we minimise the objective
function $I_3$ under the constraints $I_1\geq0$ and $I_2\geq0$ (and $a_s\geq0$
for all~$s$). The third condition is redundant if and only if the minimum is
zero.

To check whether a given model has dimension at most two thus requires
checking whether 
\begin{multline*}
  (I_1\geq0)\land (I_2\geq0)\Rightarrow I_3\geq0\quad \hbox{ or } \quad(I_1\geq0)\land
(I_3\geq0)\Rightarrow I_2\geq0
\\
 \hbox{ or }\quad (I_2\geq0)\land
(I_3\geq0)\Rightarrow I_1\geq0.
\end{multline*}
This can be done by solving three small integer linear programming problems.
It is worth mentioning that in each 2-dimensional case that we discovered, the
2-dimensionality can be explained (up to a permutation of coordinates) by the
existence of two non-negative numbers $\alpha$ and $\beta$ such that for any
$ijk\in \cS$, $$ k\ge \alpha i +\beta j. $$ The pairs $(\alpha,\beta)$ that we
find are $(0,0), (1,0), (1,1), (1/2,1/2)$ and $ (1,2)$. In
Example~\ref{ex:2D}, this inequality holds for $\alpha=\beta=1$.

\subsection{Equivalent models}

Two models are said to be \emm equivalent, if they only differ by a
permutation of the step coordinates, or if they only differ by \emm unused
steps,, that is, steps that are never used in a walk confined to the octant.

\begin{lemma}\label{lem:unused}
    The model $\cS$ contains unused steps if and only if one of the following conditions holds:
  \begin{enumerate}
 \item [{$(A_x)$}] $\cS$ contains an $x^-$ step, but no $x^+$ step,
 \item [{$(B)$}] $\cS$ is non-empty, and each step of $\cS$ has a negative coordinate,
\item [{$(C_z)$}]$\cS$ contains $001$, any step $ijk$  of $\cS$
  satisfies $i+j\le 0$, and $\cS\not \subset \{001,00\bar 1 \}$,
 \item [{$(D)$}] there exists a permutation of the coordinates which, applied
   to $\cS$, gives a step set satisfying $A_x$ or $C_z$.
  \end{enumerate}
In Case~$A_x$, every $x^-$ step is unused. In Case B, all steps are unused. In
Case~$C_z$ all steps with a negative $x$- or $y$-coordinate are unused.
\end{lemma}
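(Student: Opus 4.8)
The plan is to prove Lemma~\ref{lem:unused} by establishing the two directions separately, and in each direction reducing everything to a one-dimensional (half-space) criterion for which steps can appear. A step $s$ is \emph{unused} precisely when no walk confined to the octant ever takes the step $s$; equivalently, there is no octant-walk whose first step is $s$ (since any prefix of an octant-walk is again an octant-walk, and by translation invariance at the level of the inequalities, $s$ is usable somewhere iff it is usable as a first step, which forces all coordinates of $s$ to be non-negative only in degenerate cases). The cleanest formulation I would use: $s$ is usable iff there is a nonempty octant-walk starting with $s$, and I would analyse usability coordinate by coordinate.

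First I would dispose of the ``if'' direction by exhibiting the unused steps in each case. In Case $(A_x)$, if $\cS$ has an $x^-$ step but no $x^+$ step, then every walk has non-increasing $x$-coordinate; since the walk starts at the origin with $x=0$, it can never move the $x$-coordinate below $0$, so an $x^-$ step can never be used — giving the stated conclusion that every $x^-$ step is unused. In Case $(B)$, every step has a strictly negative coordinate in some direction; the very first step would then push that coordinate to $-1$, violating the corresponding condition in~\eqref{3ineq}, so \emph{no} step can be used, i.e.\ all steps are unused. Case $(C_z)$ is the subtle one: here $001$ is present, every step satisfies $i+j\le 0$, and $\cS\not\subset\{001,00\bar1\}$. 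The point is that the only way to enter the region $i+j>0$ is barred (every step weakly decreases $i+j$), so the walk is confined to $i+j\le 0$; combined with $i,j\ge0$ this forces $i=j=0$ throughout. Hence any step with a negative $x$- or $y$-coordinate would immediately leave $\{i=j=0\}$ and cannot be used. Case $(D)$ is just the permutation-closure of $(A_x)$ and $(C_z)$, so it follows formally once those are handled.

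For the ``only if'' direction I would argue contrapositively: assuming none of $(A_x)$, $(B)$, $(C_z)$, $(D)$ holds, I would show every step is usable. The negation of $(B)$ (in its permuted form, via $(D)$) guarantees that $\cS$ is not entirely ``negative'', and the negations of $(A_x)$ and $(C_z)$ (and their permutations) ensure that for each coordinate direction there is enough ``positive mass'' to compensate. Concretely, for a given target step $s$ I would build an explicit usable witness: first take a sequence of steps that climbs high enough in each coordinate to create room, then take $s$. The negation of $(A_x)$ and its permutations tells me that whenever $\cS$ contains a step with a negative coordinate in some direction, it also contains a step positive in that direction, so I can pre-load that coordinate. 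The negation of the $(C_z)$ conditions rules out the obstruction where a coordinate (or a sum like $i+j$) is trapped at its boundary. Assembling these climbing steps into a valid octant prefix, and then appending $s$, exhibits $s$ as used.

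The main obstacle I expect is Case $(C_z)$ and, correspondingly, its role in the ``only if'' direction: unlike $(A_x)$ which is a single-coordinate monotonicity obstruction, $(C_z)$ is a \emph{combined} obstruction on $i+j$ together with the presence of the specific step $001$, and the exceptional clause $\cS\not\subset\{001,00\bar1\}$ is needed precisely to exclude the degenerate one-dimensional model where nothing is unused. Getting the boundary bookkeeping right — showing that the trapping of $i+j$ at $0$ is the \emph{only} new way (beyond the $(A_x)$-type monotonicity) that steps become unused, and that outside these enumerated cases one can always climb away from every boundary before using the target step — is where the real care lies. I would likely organise this by a short case analysis on which coordinate-sums are ``monotone decreasing along every step of $\cS$'', since each such monotone invariant starting from the origin is exactly a mechanism that can pin a walk to a face of the octant and render steps unusable.
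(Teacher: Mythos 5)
Your sufficiency direction is fine and matches what the paper dismisses in one line as ``the rest of the lemma is obvious'': monotonicity of the $x$-coordinate for $(A_x)$, the first-step violation for $(B)$, and the trapped invariant $i+j\le 0$ for $(C_z)$ are exactly the right mechanisms. One incidental error: your opening claim that a step is usable if and only if some octant walk has it as its \emph{first} step is false --- for $\cS=\{100,\bar 100\}$ the step $\bar 100$ is used as the second step of the walk $100,\bar 100$ but never as a first step; there is no translation invariance here, since positivity is measured from the origin. The rest of your plan (climb, then append $s$) rightly ignores this, so it is a misstatement rather than a load-bearing error.

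The genuine gap is in the converse, and it is precisely the step you flag as ``where the real care lies'' without supplying it. Your contrapositive reads: $\neg(A_x)$ gives an $x^+$ step whenever there is an $x^-$ step, ``so I can pre-load that coordinate''. That inference is circular: $\neg(A_x)$ gives the \emph{existence} of an $x^+$ step, not its \emph{usability}, and that $x^+$ step may itself require pre-loading in a direction whose positive steps in turn require $x$ to be pre-loaded. The model $\cS=\{1\bar 10,\ \bar 110,\ 001\}$ is exactly such a deadlock (it is an instance of $(C_z)$), so breaking this circularity is the entire content of the hard direction. The paper's key idea, absent from your proposal, is a bootstrap claim: \emph{every unused step $s$ admits a negative coordinate direction in which all positive steps are also unused}. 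It is proved by concatenation: if for each negative coordinate of $s$ there were a \emph{used} step positive in that direction, take octant walks ending with each such step and concatenate them (a concatenation of octant walks is an octant walk, and each gained coordinate stays $\ge 1$ since the later blocks have non-negative partial sums); appending $s$ then exhibits $s$ as used. Once this claim is in hand, one fixes a direction, say $x$, with all $x^+$ steps unused, and a short case analysis on which steps of $\{0,1\}^3$ can lie in $\cS$ pins down exactly $(B)$ and $(C_z)$. Your proposed substitute --- organising by ``which coordinate-sums are monotone decreasing along every step of $\cS$'' --- assumes what must be proved, namely that such monotone non-negative functionals (plus the first-step obstruction of $(B)$) exhaust all trapping mechanisms; nothing in the proposal excludes, say, an obstruction along $i+2j$, and the framing is demonstrably leaky: for $\cS=\{\bar 111,\ 1\bar 11,\ 11\bar 1\}$ every step is unused, yet no non-zero functional $\alpha i+\beta j+\gamma k$ with $\alpha,\beta,\gamma\ge 0$ is non-increasing along all three steps (summing the three values gives $\alpha+\beta+\gamma\le 0$). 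So the easy half of your proof stands, but the hard half is missing its engine --- the concatenation/bootstrap lemma, or an equivalent closure argument on the set of used steps.
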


\begin{proof}

Let $s$ be an unused step. Let us prove that $s$ has at least one negative
coordinate, say along the $x$-direction, such that all $x^+$-steps (if any)
are also unused. We prove this by contradiction. Assume that for any negative
coordinate of $s$ there exists a used step which is positive in this
direction. Let $u_1, u_2, \ldots$ be a collection of such used steps (at most
three of them), one for each negative coordinate of $s$. Take an octant walk
ending with $u_1$, concatenate it with an octant walk ending with $u_2$, and
so on to obtain finally a walk $w$. Adding $s$ at the end of $w$ gives an
octant walk containing $s$. We have thus reached a contradiction, and proved
our statement.

So let us assume that $\cS$ contains an unused step with a negative
$x$-coordinate, and that all $x^+$ steps of $\cS$, if any, are unused. Then
all $x^-$ steps are unused. If there is no $x^+$ step, we are in Case $A_x$.
Now assume that there are $x^+$ steps, but that all of them are unused. Let us
then prove that $B$ or $C_z$ (or its variant $C_y$) holds.

\begin{itemize}
\item If each step of $\cS$ has a negative coordinate (Case $B$), then all
steps are indeed unused.
\item We now assume that $\cS$ contains a non-negative step, that is, a step
in $\{0,1\}^3$. The steps $111$, $110$, $101$ and $100$ cannot belong to
$\cS$, since they are $x^+$ and used. The step $011$ cannot belong to $\cS$
either, otherwise any $x^+$ step would be used.
\item We are thus left with sets in which the non-negative steps are $001$
and/or $010$. In fact, they cannot be both in $\cS$, otherwise any $x^+$ step
would be used. So assume that the only non-negative step of $\cS$ is $001$.
This forces every $x^+$ step to be $y^-$ (otherwise it would be used), and
conversely every $y^+$ step must be $x^-$ (otherwise any $x^+$ step would be
used). In other words, we are in Case $C_z$.
\end{itemize}
The rest of the lemma is obvious.
\end{proof}

\subsection{The number of non-equivalent models of dimension 2 or 3}
We now determine how many models are left when we discard  models
of dimension at most 1, models with unused steps, and when we moreover identify
models that only differ by a permutation of the coordinates. We count
these models by their cardinality.

\begin{prop}\label{prop:interesting}
   The generating function of  models
having dimension $2$ or $3$, no unused step, and counted up to
permutations of the coordinates, is  
\begin{multline*}
    I=73\,{u}^{3}+979\,{u}^{4}+6425\,{u}^{5}+28071\,{u}^{6}+91372\,{u}^{7}+
234716\,{u}^{8}+492168\,{u}^{9}\\+860382\,{u}^{10}+1271488\,{u}^{11}+
1603184\,{u}^{12}+1734396\,{u}^{13}+1614372\,{u}^{14}\\+1293402\,{u}^{15
}+890395\,{u}^{16}+524638\,{u}^{17}+263008\,{u}^{18}+111251\,{u}^{19}\\+
39256\,{u}^{20}+11390\,{u}^{21}+2676\,{u}^{22}+500\,{u}^{23}+73\,{u}^{
24}+9\,{u}^{25}+{u}^{26}.
\end{multline*}
\end{prop}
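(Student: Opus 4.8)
The statement is purely enumerative, so the plan is to count, grouped by cardinality, the subsets $\cS$ of the $26$-element step set $T=\{\bar 1,0,1\}^3\setminus\{000\}$ that pass two filters — dimension at least $2$, and no unused step — modulo the action of the symmetric group $\mathfrak S_3$ permuting the three coordinates. The first thing I would record is that the ``interesting'' property is $\mathfrak S_3$-invariant: permuting coordinates permutes the three inequalities in~\eqref{3ineq}, hence preserves the dimension, while Lemma~\ref{lem:unused} (through its Case $D$) makes the presence of unused steps symmetric as well. Moreover, among models with no unused step the only surviving equivalence is coordinate permutation, since two distinct models without unused steps cannot differ merely by unused steps. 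Thus $I(u)$ is the $u^{|\cS|}$-weighted count of $\mathfrak S_3$-orbits of good models, and Burnside's (Cauchy--Frobenius) lemma applies because the weight is constant on each orbit.

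Next I would turn both filters into explicit combinatorial predicates, so that no optimisation is needed at the cutoff. Discarding dimension $\le 1$ is handled by Lemma~\ref{lem:1D}: a model has dimension $\le 1$ exactly when, for one of the three coordinates playing the role of $x$, conditions (a) and (b) hold, the $0$-dimensional case $\cS\subseteq\{0,1\}^3\setminus\{000\}$ being a further degeneration. Discarding models with an unused step is handled by the four cases $A_x,B,C_z,D$ of Lemma~\ref{lem:unused}. Each is a membership/sign condition on the coordinates of the steps of $\cS$, so ``good'' is a decidable predicate on subsets of $T$, and in particular the ILP procedure of Section~\ref{sec:dim} is not required here (it only separates dimension $2$ from $3$, whereas the proposition lumps these together).

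The Burnside computation then reduces to evaluating, for each $g\in\mathfrak S_3$, the weighted series $F_g(u)=\sum u^{|\cS|}$ over good $\cS$ with $g\cdot\cS=\cS$, and forming $I=\tfrac16\sum_g F_g$. The cycle structures of $g$ on $T$ are easy to list: the identity fixes all $26$ steps; each of the three transpositions fixes the $8$ steps with two equal coordinates and pairs up the remaining $18$; each of the two $3$-cycles fixes only $\pm(1,1,1)$ and splits the other $24$ steps into $8$ triples. Without any filter these give the baselines $(1+u)^{26}$, $(1+u)^8(1+u^2)^9$ and $(1+u)^2(1+u^3)^8$; the content of the proof is to carry out the same counts restricted to good $g$-invariant $\cS$, which I would do by inclusion--exclusion on the two bad events, \emph{dimension} $\le 1$ and \emph{possessing an unused step}.

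The main obstacle is precisely that the ``good'' predicate is \emph{not} multiplicative over the cycles of $g$: whether $\cS$ has dimension $\ge 2$ and no unused step depends on the global pattern of its steps, so the clean product formulas above no longer hold once the filters are imposed, and the $F_g$ must be obtained by a careful enumeration rather than read off. For this reason I would perform the tallies by computer, using Lemmas~\ref{lem:1D} and~\ref{lem:unused} as the filtering subroutines, and verify the outcome in two independent ways: by comparing the unweighted totals $F_{\mathrm{id}}(1)$ and $I(1)$ against a brute-force pass over all $2^{26}$ models, and by the partial-sum cross-check that the coefficients of $u^3,\dots,u^6$ in $I$ sum to the announced $35\,548$.
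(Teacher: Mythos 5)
Your proposal is correct and shares its skeleton with the paper's proof: both reduce the count to a weighted Burnside (Cauchy--Frobenius) sum over $\mathfrak{S}_3$, both rest on the $\mathfrak{S}_3$-invariance of the two filters, both use Lemma~\ref{lem:unused} and Lemma~\ref{lem:1D} as the combinatorial characterisations (and you rightly note that the ILP test of Section~\ref{sec:dim} is not needed, since the proposition lumps dimensions $2$ and $3$ together), and your cycle data and unfiltered baselines $(1+u)^{26}$, $(1+u)^8(1+u^2)^9$, $(1+u)^2(1+u^3)^8$ agree exactly with the paper's $[26]$, $[8][[9]]$ and $[2]\langle 8\rangle$. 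Where you diverge is in the evaluation of the fixed-point series $F_g$. You assert that the good predicate is not multiplicative over the cycles of $g$ and that the $F_g$ ``must be obtained by a careful enumeration rather than read off''; this is too pessimistic, and it is precisely the point where the paper does more. The paper splits $I=J-K$ (no-unused-step models, minus those of dimension at most $1$), and computes each of $J^{\sigma}$ and $K^{\sigma}$ by inclusion--exclusion over the cases of Lemma~\ref{lem:unused} and the privileged-coordinate cases of Lemma~\ref{lem:1D}: every event in these inclusion--exclusions is of the form ``certain steps are forced in, the rest are drawn freely from an explicit $g$-stable sublist'', so each term \emph{is} a product of factors $(1+u)^a(1+u^2)^b(1+u^3)^c$ times a monomial, and closed forms such as~\eqref{Jid}, \eqref{J12}, \eqref{J123}, \eqref{K1idsol}, \eqref{K12sol}, \eqref{K123sol} can be read off by hand. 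Your machine enumeration of $g$-invariant subsets ($2^{26}$ for the identity, $2^{17}$ per transposition, $2^{10}$ per $3$-cycle) is computationally trivial and constitutes a valid proof given a correct implementation of the two lemmas; what the paper's route buys is a self-contained, human-checkable derivation, at the cost of a tedious case analysis. One caution on your verification plan: checking $F_{\mathrm{id}}(1)$ against a brute-force pass is the same computation twice, and the $35\,548$ cross-check is circular since that figure is itself extracted from $I$; a genuinely independent check would be to recompute the paper's $J$ and $K$ of Propositions~\ref{propA} and~\ref{propA1} separately, or to enumerate orbits directly via canonical representatives and compare with the Burnside average.
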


\noindent This is big (11\,074\,225 models), but more encouraging than
\[ (1+u)^{26}=
1+26\,u+325\,{u}^{2}+2600\,{u}^{3}+14950\,{u}^{4}+65780\,{u}^{5}+
230230\,{u}^{6}+O( {u}^{7}) . \] 
In particular, the number of non-equivalent interesting models of
cardinality at most 6 is 35\,548, which is about 9 times less than
$1+26+\cdots+230230=313\,912$. 

The proof of this proposition involves inclusion-exclusion, Burnside's
lemma, and the above characterisations of 0- and 1-dimensional
models and of models with unused steps. We first determine the
polynomial $J$ that counts,  up to 
permutations of the coordinates, sets having 
no unused step. We then subtract from $J$ the polynomial $K$ that
counts models of dimension at most 1.
The proof is rather tedious and given in Appendix~\ref{app:interesting}.

\subsection{The group of the model}
\label{sec:group}
Given a model $\cS$, we denote by $S$ the Laurent polynomial 
\begin{equation}\label{char-def}
S(x,y,z) =\displaystyle\sum_{ijk\in\mathcal{S}}x^iy^jz^k,
\end{equation}
and write  
\begin{align}
S(x,y,z) &= \bx 
A_{-}(y,z) + A_{0}(y,z) + x A_{+}(y,z)
\nonumber\\
             &= \by B_{-}(x,z) + B_{0}(x,z) + y B_{+}(x,z)\label{ABC-def}
\\
             &= \bz C_{-}(x,y) + C_{0}(x,y) + z C_{+}(x,y)\nonumber
\end{align}
where $\bx=1/x$, $\by=1/y$ and $\bz=1/z$.
We call $S$ the \emm characteristic polynomial, of~$\cS$.

Let us first assume that $\cS$ is 3-dimensional. Then it has
a positive step in each direction, and
 $A_+$, $B_+$ and $C_+$ are non-zero. 
The \emm group of, $\cS$ is the group $G$ of
birational transformations of the variables $[x,y,z]$ generated by the
following three involutions:
\[ \iota\left([x,y,z]\right) = \left[\bx\,
  \frac{A_{-}(y,z)}{A_+(y,z)},y,z\right] ,
\hspace{0.4in}
    \psi \left([x,y,z]\right) = \left[
      x,\by\, \frac{B_{-}(x,z)}{B_+(x,z)},z\right],
\]
\[
    \tau \left([x,y,z]\right) = \left[ x,y,\bz\, \frac{C_{-}(x,y)}{C_+(x,y)}\right]. \]
By construction, $G$ fixes the Laurent polynomial $S(x,y,z)$.

For a 2-dimensional model in which the $z$-condition can be ignored,
the relevant group is the group  generated by $\iota$ and $\psi$.

\section{Computer predictions and summary of the results}
\label{sec:summary}
We  focus in this paper on models with at most 6 steps.
Given such a model $\cS$, we compute the first 1000 terms of the series
$O(x_0,y_0,z_0;t)$, for all $(x_0,y_0,z_0) \in \{0,1\}^3$, using the following
step-by-step recurrence relation for the coefficients of $O(x,y,z;t)$: 
\begin{equation}\label{rec}
o(i,j,k;n)=\left\{
\begin{array}{ll}
  0 & \hbox{if } i<0 \hbox{ or } j<0 \hbox{ or } k<0,\\
\mathbbm{1}_{i=j=k=0} & \hbox{if } n=0,\\ 
\displaystyle \sum _{abc\in \cS} o(i-a,j-b,k-c;n-1) &\hbox{otherwise.}
\end{array}\right.
\end{equation}
From these numbers, we try to guess a differential equation (in $t$)
satisfied by $O(x_0,y_0,z_0;t)$
using the techniques described in~\cite{BoKa09}
and the references therein.

As described above in Section~\ref{sec:group}, we associate to $\cS$ a group
$G$ of rational transformations. Inspired by the quadrant case~\cite{BoMi10},
where the finiteness of the group is directly correlated to the D-finiteness
of the generating function, we also try to determine experimentally if $G$ is
finite. The procedure is easy: one simply writes out all words in $\iota$,
$\psi$, and $\tau$ of some length $N$, removes words which correspond to the
same transformation, and checks whether or not the remaining elements form a
group (if not then this process is repeated with a larger $N$).

\begin{figure}[htb]
  \centering
  \begin{tabular}{cccc}
     $ \diagr{NE,S,W}$  & $  \diagr{SW,N,E}$
  & $\diagr{N,S,E,W,NE,SW} $ &
  $ \diagr{E,W,SW,NE}$
\\
Kreweras & Reverse Kreweras& Double Kreweras& Gessel
  \end{tabular}
   \caption{The four quadrant models with orbit sum zero:
 all of them are algebraic.}
  \label{fig:alg}
\end{figure}
Before we summarise our results for octant walks, let us recall those obtained
for quadrant walks (Table~\ref{table2}). The most striking feature is of
course the equivalence between the finiteness of the group and the
D-finiteness of the generating function. The four trickiest D-finite models,
shown in Figure~\ref{fig:alg}, are those for which a certain \emm orbit sum,
(OS in the table) vanishes. They are in fact algebraic.

\begin{table}
\begin{center}
\def\alg{\leaf{\fbox{\textbf{algebraic}}}}
\def\dfinite{\leaf{\fbox{\textbf{D-finite}}}}
\def\notdfinite{\leaf{\fbox{\textbf{not D-finite}}}}
\edgeheight=5pt\nodeskip=1em\leavevmode
\tree{
  \begin{tabular}[c]{c}
    2D  quadrant models\\ $79=[7,23,27,16,5,1]$
  \end{tabular}
}{
        \tree{ \begin{tabular}[c]{c}$|G|{<}\infty$\\ $23=[5,6,4,5,2,1]$\end{tabular}}{
        \tree{ \begin{tabular}[c]{c}OS${\neq}0$\\ $19=[3,5,4,4,2,1]$\end{tabular}}{\dfinite}
        \tree{ \begin{tabular}[c]{c}OS${=}0$\\ $4=[2,1,0,1,0,0]$\end{tabular}}{\alg}
      }
      \tree{ \begin{tabular}[c]{c}$|G|{=}\infty$\\ $56=[2,17,23,11,3,0]$\end{tabular}}{\notdfinite}
    }
\end{center}
\caption{Results obtained for 2D quadrant
  walks. D-finiteness and algebraicity are meant
in all variables, $x$,
  $y$ and $t$ (see~\cite{BoMi10,BoKa10}). For the non-D-finite
  models, it is known that   $Q(0,0;t)$ or $Q(1,1;t)$ is not
  D-finite~\cite{BoRaSa12,MiRe09,MeMi13}. The numbers in brackets give
for each class the number of models of cardinality $3, 4, \ldots,
8$.}
\label{table2}
\end{table}

\begin{table}
\begin{center}
\def\dfinite{\leaf{\fbox{\textbf{D-finite}}}}
\def\notdfinite{\leaf{\fbox{\textbf{not D-finite?}}}}
\edgeheight=5pt\nodeskip=1em\leavevmode
\tree{\begin{tabular}{c}3D  octant models with $\leq6$ steps\\ $20804=[1,220,2852,17731]$\end{tabular}}
{
    \tree{
\begin{tabular}{c}
 $|G|{<}\infty$\\
$170=[0,26,47,97]$
\end{tabular}}{
      \tree{\begin{tabular}{c}OS${\neq}0$\\ $108=[0,11,31,66]$\end{tabular}}{\tree{kernel method}{\dfinite}}
      \tree{\begin{tabular}{c} OS${=}0$\\ $62=[0,15,16,31]$\end{tabular}}{
        \tree{\begin{tabular}{c}Hadamard\\ $43=[0,8,16,19]$\end{tabular}}{\dfinite}
        \tree{\begin{tabular}{c}not Hadamard\\ $19=[0,7,0,12]$\end{tabular}}{\notdfinite}
      }
    }
    \tree{\begin{tabular}{c}$|G|{=}\infty$?\\ $20634 =[1,194,2805,17634]$\end{tabular}}{\notdfinite}
  }
\end{center}
\caption{Results and conjectures for 3D models. D-finiteness is meant
  in all variables, $x, y,z$ and $t$. The numbers in brackets give for
  each class the number of models of cardinality 3, 4, 5 and 6.}
\label{table:3D}
\end{table}

Table~\ref{table:3D} summarises our conjectures and results for 3D octant
models. We find 170 groups of finite order (in fact, of order at most 48). The
remaining ones have order at least 200. All models for which the length
generating function $O(1,1,1;t)$ was conjectured D-finite have a finite
group, and we have in fact proved that their complete generating function
$O(x,y,z;t)$ is D-finite in its four variables. The table tells which of our
main two methods (the kernel method and the Hadamard decomposition) proves
D-finiteness. The main difference with the quadrant case is the following:
\emph{for 19 models with a finite group and zero orbit sum, we have not been
able to guess differential equations, and it may be that these models are not
D-finite.} Details are given in Section~\ref{sec:ThreeDim}. In particular, we
have computed much more than 1000 coefficients for these $19$ intriguing
models. Another difference between Tables~\ref{table2} and~\ref{table:3D} is
the disappearance of algebraic models. For some models $\cS$, we find certain
algebraic specialisations $O(x_0,y_0,z_0;t)$. But then the walks counted by
this series do not use all steps of $\cS$, and deleting the unused steps
leaves a model of lower dimension. We conjecture that, apart from these
degenerate cases, there is no transcendental series in 3D models. In
particular, we believe $O(x,y,z;t)$ to be transcendental.

For 2D octant models, we have only studied the projection of the walks on the
relevant quadrant. Since several 3D steps may project on the same 2D step,
this means studying a quadrant model with steps \emm in a multiset,. It seems
that the dichotomy established earlier for quadrant walks without multiple
steps still holds: all models for which we found a finite group have been
proved D-finite, and the others are conjectured to have an infinite group and
to be non-D-finite. See Section~\ref{sec:TwoDim} for details and a table
classifying these models. Four D-finite models, shown in
Figure~\ref{fig:quatre}, turn out to be especially interesting.

\section{The algebraic kernel method}
\label{sec:kernel}

In this section, we adapt to octant walks the material developed for quadrant
walks in Sections~3 and~4 of~\cite{BoMi10}. It will allow us to prove
D-finiteness for a large number of models.

\subsection{A functional equation}

Let $\mathcal{S}$ be a model with associated generating function $O(x,y,z;t)
\equiv O(x,y,z)$. Recall the definitions~\eqref{char-def} and~\eqref{ABC-def}
of the characteristic polynomial of $\cS$ and of the polynomials $A_+, A_0,
A_-$, etc.
We also let 
\[
D_{-}(z)=[\bx\by]
S(x,y,z) := \sum_{k \hbox{ \scriptsize{s.t. }} \bone\bone k \in \cS} z^k,
\] 
and define similarly
\[
 E_{-}(y) = [\bx\bz] S(x,y,z) \qquad \hbox{and } \quad F_{-}(x) = [\by\bz]S(x,y,z) .
 \] 
Finally, let $\epsilon$ be 1 if $\bar 1 \bar 1 \bar 1 $ belongs to
$\mathcal{S}$ and $0$ otherwise. The following functional equation translates
the fact that a walk of length $n$ must be a walk of length $n-1$ followed by
a step in $\mathcal{S}$, provided that this step does not take the walk out of
the octant:
\begin{align} \label{eq:fun3D}
O(x,y,z) = 1 &+ tS(x,y,z)O(x,y,z) \\
        &- {t}\bx A_{-}(y,z) O(0,y,z) - {t}{\by}B_{-}(x,z) O(x,0,z) - {t}{\bz}C_{-}(x,y) O(x,y,0) \notag \\
        &+ {t}{\bx\by}D_{-}(z) O(0,0,z)  + {t}{\bx\bz}E_{-}(y)O(0,y,0) + {t}{\by\bz}F_{-}(x) O(x,0,0)\notag \\
        &- {\epsilon t}{\bx\by\bz} O(0,0,0).\notag
\end{align}
 The terms on the last three lines ensure that the restriction to the
 positive octant is enforced, and are given by the inclusion-exclusion
 principle.  This is the series counterpart of the recurrence
 relation~\eqref{rec}. 
 
If the model is only 2-dimensional, so that, for instance, the
positivity condition in the third variable can be ignored, the
following simpler equation holds:
\begin{align} \label{eq:fun3D-2D}
O(x,y,z) = 1 &+ tS(x,y,z)O(x,y,z) \\
        &- {t}{\bx}A_{-}(y,z) O(0,y,z) - {t}{\by}B_{-}(x,z)
        O(x,0,z) 
 \notag \\
        &
+{t}{\bx\by}D_{-}(z) O(0,0,z) \notag .
\end{align}
It is then of the same nature as the equations studied for quadrant
models, with the variable $z$ playing no particular
role~\cite{BoMi10}. 

\subsection{Orbit sums for finite groups} 
Let us multiply~\eqref{eq:fun3D} by $xyz$ and group the terms
involving $O(x,y,z)$. This gives
\begin{align} \label{eq:ker3D}
xyzK(x,y,z)O(x,y,z) &= xyz - tyzA_{-}(y,z) O(0,y,z)\\
& - txzB_{-}(x,z) O(x,0,z) - txyC_{-}(x,y) O(x,y,0)\notag \\
        &+ t zD_{-}(z) O(0,0,z)  + tyE_{-}(y)O(0,y,0) + txF_{-}(x) O(x,0,0)\notag \\
        &- t\epsilon O(0,0,0).\notag
\end{align}
where $K(x,y,z) := 1-tS(x,y,z)$ is the \textit{kernel} of the model. If the
model is 2-dimensional (with the $z$-condition redundant), the terms in $C_-$,
$E_{-}$, $F_-$ and $\epsilon$ are not there, see~\eqref{eq:fun3D-2D}. What is
important in the above equation is that all unknown summands on the right-hand
side involve at most two of the three variables $x$, $y$ and $z$. For
instance, the second term, $tyzA_-(y,z)O(0,y,z)$, does not involve $x$.

Assume that the group $G$ is finite. Let us write the $|G|$ equations obtained
from~\eqref{eq:ker3D} by replacing the 3-tuple $(x,y,z)$ by any element of its
orbit. Then, we form the alternating sum of these $|G|$ equations by weighting
each of them by the sign of the corresponding group element (that is, minus
one to the length of a minimal word in $\iota, \psi$ and $\tau$ that expresses
it). Since each unknown term on the right-hand side involves at most two of
the three variables $x,y,z$, and each of the involutions $\iota$, $\psi$ and
$\tau$ fixes one coordinate, all unknown series cancel on the right-hand side,
leaving:
\begin{equation} \label{eq:sum3D}
\sum_{g \in G} \text{sign}(g)\  g\!\left(xyzO(x,y,z)\right) = \frac{1}{K(x,y,z)}  \sum_{g \in G} \text{sign}(g)g(xyz),
\end{equation}
where, for any series $T(x,y,z)$ and any element $g$ of $G$,
the term $g(T(x,y,z))$ must be understood as $T(g(x,y,z))$. 
The right-hand side is now an explicit rational function, called the
\emm orbit sum,.
In the left-hand side, we have a \fps\ in $t$ with coefficients in
$\qs(x,y,z)$. We call~\eqref{eq:sum3D} the \emm orbit equation,.

Assume that all the coordinates of all elements in the orbit of $[x,y,z]$ are
\emm Laurent polynomials, in $x$, $y$ and $z$. This happens for instance in
the example of Section~\ref{sec:example-kernel-Laurent} below (the orbit is
shown in Figure~\ref{fig:orbit}). Assume moreover that for any element
$[x',y',z']$ in the orbit, other than $[x,y,z]$, there exists a variable, say
$x$, such that $x'$, $y'$ and $z'$ are in fact polynomials in $\bx$. This is
again true in Figure~\ref{fig:orbit}. Then the series $x'y'z'O(x',y',z')$
occurring in the left-hand side of~\eqref{eq:sum3D} consist of monomials $x^i
y^j z^k t^n$ where at least one of $i, j$ and $k$ is non-positive. In this
case, extracting from~\eqref{eq:sum3D} the monomials with positive exponents
in $x, y$ and $z$ gives
\begin{equation}\label{extracted}
xyzO(x,y,z;t) = [x^{>0}][y^{>0}][z^{>0}]\frac{1}{K(x,y,z;t)} \sum_{g
  \in G} \text{sign}(g)g(xyz).
\end{equation}
By~\eqref{diag}, the positive part of a series can be expressed as a
diagonal. Thus $O(x,y,z;t)$ is a diagonal of a rational
series, and is D-finite in its four variables~\cite{Li88,lipshitz-df}.  

More generally, the above identity holds if every orbit element $[x',y',z']$
other than $[x,y,z]$ satisfies:
\begin{multline}
\label{extraction}
\{x',y',z'\} \subset \qs(x,y)[\bz]
\quad \hbox{or } \quad 
\{x',y',z'\} \subset \qs(x)[\by,z]
\quad \\ \hbox{or } \quad 
\{x',y',z'\} \subset \qs
 [\bx,y,z]. \quad
\end{multline}
Indeed, in this case, the coefficient of $t^n$ in $x'y'z'O(x',y',z';t)$, once
expanded as a (Laurent) series in $z$, then $y$, then $x$, consists of
monomials with at least one non-positive exponent. Extracting the monomials
where all exponents are positive gives~\eqref{extracted}. An example is
detailed in Section~\ref{sec:example-kernel-rational}.

Condition~\eqref{extraction} does not cover all cases where the extraction of
monomials with positive exponents in $x$, $y$ and $z$ leads
to~\eqref{extracted}. Two interesting two-dimensional examples are discussed
in Section~\ref{sec:kernel-interesting}.

This procedure is what we call the \emm algebraic kernel method,.

\subsection{First illustration of the kernel method}
\label{sec:example-kernel-Laurent}

Consider the step set $\cS=\{\bar 1 \bar 1 \bar 1 ,\bar 1 \bar 1 1,\bar 1
10,100\}$. Denoting by $a, b, c$ and $d$ the number of steps of each type in
an $\cS$-walk, the positivity conditions read
$$ d\ge a+b+c, \quad c\ge a+b, \quad b\ge a, $$
and it is clear that none is redundant. So the model is 3-dimensional.
The functional equation~\eqref{eq:fun3D} reads in this case
\begin{multline*}
   K(x,y,z)  O(x,y,z)= 1  
-t\bx(y+\by z+\by \bz) O( 0,y,z)
-t\bx\by (z+\bz) O(x, 0,z)
-t\bx\by\bz O(x,y, 0)\\
+t\bx\by (z+\bz) O( 0, 0,z)
+ t\bx\by\bz O( 0, y,  0)
+t\bx\by\bz O(x, 0, 0)
-t\bx\by\bz O( 0, 0, 0),
\end{multline*}
where the kernel is
$$ K(x,y,z)=1-t( \bx\by\bz +\bx\by z+\bx y+x). $$
The images of $[x,y,z]$ by the involutions $\iota$, $\psi$ and $\tau$
are respectively
$$
\left[\bx(y+\by z+\by\bz),y,z\right],
\quad \left[x,\by(z+\bz),z\right],
\quad \left[x,y,\bz\right].
$$
By composing these involutions, one observes that they generate a group of
order~8, shown in Figure~\ref{fig:orbit}. In fact, $\iota$, $\psi$ and $\tau$
commute.
Note that all coordinates in the orbit are Laurent polynomials in $x$, $y$
and $z$. The orbit equation~\eqref{eq:sum3D} reads
\begin{multline*}
  xyzO(x,y,z) - \bx yz(y+\by z+\by \bz) O(\bx(y+\by z+\by \bz),y,z )
- x\by z(z+\bz) O(x, \by(z+\bz),z)\\
-xy\bz O(x,y,\bz)
+\bx \by z (y+\by z+\by \bz)(z+\bz) O(\bx(y+\by z+\by
\bz),\by(z+\bz),z)
\\
+\bx y\bz(y+\by z+\by\bz) O(\bx(y+\by z+\by \bz),y,\bz)
+x\by\bz(z+\bz)O(x,\by(z+\bz),\bz)
\\
-\bx\by\bz(y+\by z+\by\bz)(z+\bz) O(\bx(y+\by z+\by \bz),\by(z+\bz),\bz)
\\=
\frac{(x-\bx y-\bx\by z-\bx\by\bz)(y-\by z-\by\bz)(z-\bz)}{xyzK(x,y,z)}.
\end{multline*}
Now let us examine the eight unknown series occurring in
the left-hand side. The series $xyzO(x,y,z)$ is \emm positive, in $x$,
$y$ and $z$, meaning that all its monomials  involve a
positive power of each variable. In contrast, each of the other seven
series is non-positive (and in fact, negative) in at least one
variable, because Condition~\eqref{extraction} holds.
Hence, extracting the positive part in the above identity gives
$$
xyzO(x,y,z)= [x^{>0}] [y^{>0}] [z^{>0}]\frac{(x-\bx y-\bx\by
  z-\bx\by\bz)(y-\by z-\by\bz)(z-\bz)}{xyz(1-t( \bx\by\bz +\bx\by z+\bx y+x))}.
$$
We thus conclude that $O(x,y,z;t)$ is D-finite in all variables. Moreover, the
coefficient extraction can be performed explicitly, and gives rise to nice
coefficients. Details of the extraction are left to the reader.

\begin{prop}
  For the model $\cS=\{\bar 1 \bar 1 \bar 1 ,\bar 1 \bar 1 1,\bar 1 10,100\}$,
 the number of walks of length $n$ ending
at $(i,j,k)$ is non-zero if
and only if $n$ can be written $8m+i+2j+4k$, 
in which case
$$
o(i,j,k;n)=\frac{(i+1)(j+1)(k+1)n!}{(4m+i+j+2k+1)!(2m+j+k+1)!(m+k+1)!m!}.
$$
\end{prop}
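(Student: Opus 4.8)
The plan is to start from the explicit coefficient extraction that the excerpt already reduced to, namely
\[
xyz\,O(x,y,z) = [x^{>0}][y^{>0}][z^{>0}]\,
\frac{(x-\bx y-\bx\by z-\bx\by\bz)(y-\by z-\by\bz)(z-\bz)}
{xyz\bigl(1-t(\bx\by\bz+\bx\by z+\bx y+x)\bigr)}.
\]
First I would expand the geometric series $1/(1-tS)=\sum_{n\ge0}t^n S^n$ where $S=\bx\by\bz+\bx\by z+\bx y+x$, and apply the multinomial theorem to $S^n$. Because the four monomials of $S$ have disjoint ``roles'' in the exponents, writing $S^n$ as a sum over $(a,b,c,d)$ with $a+b+c+d=n$ produces a term $\binom{n}{a,b,c,d}$ times a monomial in $x,y,z$ whose exponents are explicit linear forms in $a,b,c,d$. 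The key computational step is then to track, for each of the three factors in the numerator, how the two choices in each binomial combine with the multinomial exponents, and to impose the constraint that the surviving monomial has strictly positive exponents in all three variables.

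The cleanest way to organise this is to read off the exponent bookkeeping directly from the step set: the four steps $\bone\bone\bone,\ \bone\bone1,\ \bone10,\ 100$ contribute to the $x,y,z$-exponents of a walk exactly as the positivity conditions $d\ge a+b+c$, $c\ge a+b$, $b\ge a$ suggest. I would set $i = d-a-b-c$, $j = c-a-b$, $k = b-a$ (the final coordinates, up to the shift by $xyz$), so that $a=m$, and then solve the linear system to express $a,b,c,d$ in terms of $i,j,k$ and a free parameter $m\ge0$. This immediately yields the length constraint $n=a+b+c+d = 8m+i+2j+4k$, explaining the stated necessary-and-sufficient condition on $n$, and shows that for each admissible $(i,j,k,n)$ there is a \emph{unique} choice of $(a,b,c,d)$, so the number of walks is governed by a single multinomial coefficient rather than a sum.

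Next I would combine that single multinomial $\binom{n}{a,b,c,d}=n!/(a!\,b!\,c!\,d!)$ with the contributions of the three numerator factors. Each numerator factor is a small signed sum, and after the positive-part extraction only specific sign choices survive; the net effect is to replace the bare multinomial by the product with the prefactor $(i+1)(j+1)(k+1)$ and to shift the four factorials in the denominator to $(4m+i+j+2k+1)!$, $(2m+j+k+1)!$, $(m+k+1)!$, $m!$, matching the claimed formula. The main obstacle, and the part requiring genuine care, is precisely this last reconciliation: verifying that the alternating signs in the three numerator factors telescope (rather than merely cancel terms) so that the shifted factorials and the linear prefactor $(i+1)(j+1)(k+1)$ emerge cleanly. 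I expect this to reduce to a short, if slightly intricate, finite manipulation of binomial identities, after which substituting $a,b,c,d$ in terms of $m,i,j,k$ gives the stated closed form.
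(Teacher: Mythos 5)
Your proposal follows essentially the same route as the paper: the paper proves this proposition precisely by the positive-part extraction from the kernel-method identity, saying only that ``the coefficient extraction can be performed explicitly'' with details left to the reader, and your multinomial expansion of $1/(1-tS)$, the observation that the step multiplicities $(a,b,c,d)$ are uniquely determined by $(i,j,k,n)$ (which yields the congruence $n=8m+i+2j+4k$), and the signed combination of the $4\times 3\times 2$ numerator terms constitute exactly that extraction. The one step you defer --- the final binomial manipulation producing the prefactor $(i+1)(j+1)(k+1)$ and the shifted factorials --- is likewise omitted by the paper, so your argument is at least as detailed as the published one.
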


\noindent{\bf Note.} Throughout the paper, $m!$ is defined to be infinite if $m<0$. 

\begin{figure}[ht]
\begin{center}
\hskip -10mm\scalebox{0.9}{\input{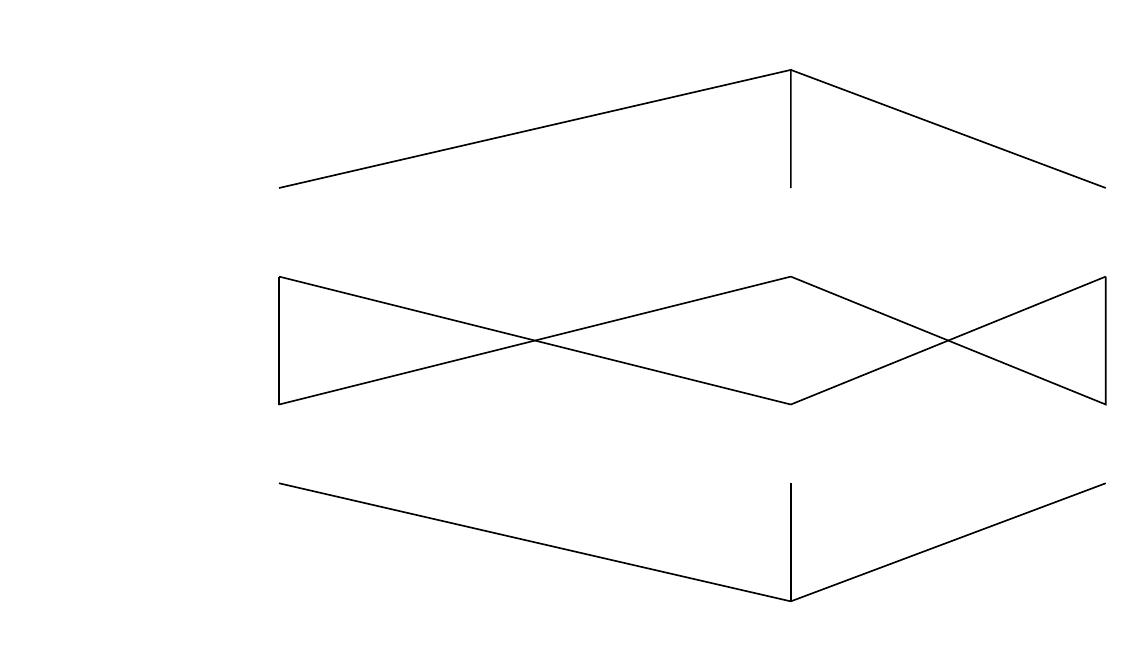_t}}
\caption{The orbit of $[x,y,z]$ under the group generated by $\iota$,
  $\psi$ and $\tau$, when $\cS=\{\bar 1 \bar 1 \bar 1 ,\bar 1 \bar 1 1,\bar 1 10,100\}$.} 
\label{fig:orbit}
\end{center}
\end{figure}

\subsection{Second illustration of the kernel method}
\label{sec:example-kernel-rational}
Let us now take
$\cS=\{\bone 0\bone, \bone 11, 0 \bone 1, 10\bone, 111\}$. This model
is also 3-dimensional.  The functional equation~\eqref{eq:fun3D} reads
\begin{multline*}
K(x,y,z)O(x,y,z)=1-t\bx(\bz+yz)O(0,y,z)
-t\by zO(x,0,z)\\
-t\bz(x+\bx) O(x,y,0)
+t\bx\bz O(0,y,0),
\end{multline*}
where
$$
K(x,y,z)= 1-t\left( \bx\bz+\bx yz+\by z+x\bz+xyz\right).
$$
The images of $[x,y,z]$ by the involutions $\phi$, $\psi$ and $\tau$
are respectively
$$
\left[ \bx,y ,z \right], \quad \left[x ,\frac\by{x+\bx} , z\right],
\quad \left[x ,y , \bz\frac{x+\bx}{\by+y(x+\bx)}\right] .
$$
As in the previous example, these three involutions commute, and thus
generate a group of order 8. The coefficients of the series occurring
in the orbit equation are rational functions in $x, y$ and $z$, but no
longer Laurent polynomials:
\begin{multline*}
  xyzO\left(x,y,z\right)
-\bx y z O\left(\bx,y ,z \right)
-\frac{x\by z}{x+\bx} O\left(x  ,\frac\by{x+\bx} , z\right)
\\
-\frac{x y  \bz\left(x+\bx\right)}{\by+y\left(x+\bx\right)} O\left( x ,y , \frac{\bz(x+\bx)}{\by+y\left(x+\bx\right)}\right)
+ \frac{\bx \by z}{x+\bx}  O\left(\bx  ,\frac\by{x+\bx} , z\right)
\\
+\frac{\bx y  \bz(x+\bx)}{\by+y\left(x+\bx\right)}O\left(\bx ,y , \frac{\bz(x+\bx)}{\by+y\left(x+\bx\right)}\right)
+\frac{x \by\bz}{\by+y\left(x+\bx\right)}O\left( x
  ,\frac\by{x+\bx},\frac{\bz(x+\bx)}{\by+y\left(x+\bx\right)}\right)
\\
-\frac{\bx \by\bz}{\by+y\left(x+\bx\right)} O\left(\bx
  ,\frac\by{x+\bx},\frac{\bz(x+\bx)}{\by+y\left(x+\bx\right)}\right)
\\=
\frac{x-\bx}{x+\bx} \cdot \frac {\by-y\left(x+\bx\right)}{\by+y\left(x+\bx\right)}\cdot 
\frac {\bx\bz-\bx yz-\by z+x\bz-xyz}{K(x,y,z)}.
\end{multline*}
However, we observe that for any series $O(x',y',z')$ occurring in
this equation, other than $O(x,y,z)$, the variables $x', y', z'$
satisfy Condition~\eqref{extraction}.
By extracting the positive part first in $z$, then in $y$, and finally
in $x$, we thus obtain:
\begin{multline*}
  xyzO\left(x,y,z\right)
=
[x^{>0}][y^{>0}][z^{>0}]\frac{x-\bx}{x+\bx} \cdot \frac {\by-y\left(x+\bx\right)}{\by+y\left(x+\bx\right)}\cdot 
\frac {\bx\bz-\bx yz-\by z+x\bz-xyz}{K(x,y,z)},
\end{multline*}
so that $O(x,y,z;t)$ is again D-finite. Moreover, the coefficient
extraction can be performed explicitly and gives rise to nice
numbers. Details are left to the reader.

\begin{prop}
  For the model $\cS=\{\bone 0\bone, \bone 11, 0 \bone 1, 10\bone, 111\}$,
the number of walks of length $n$ ending at $(i,j,k)$ is non-zero if
and only if $n$ can be written $8m+4i+2j+3k$, in which case
\begin{multline*}
  o(i,j,k;n)= \frac { \left( i+1 \right)  \left( j+1 \right)  \left( k+1 \right)}{  \left( 4m+2i+j+2k+1 \right)}\frac{ \left( 6m+3i+2j+2k \right) !}{\left( 3m+2i+j+k+1 \right) ! \left( 3m+i+j+k \right) !} \\
\frac{ n!}{ \left( 2m+i+k \right) ! \left( 2m+i+j+k+1 \right) !
  \left( 4m+2i+j+k \right) ! }.
\end{multline*}
\end{prop}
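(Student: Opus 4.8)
The plan is to prove the formula by carrying out explicitly the coefficient extraction in the identity just established, namely
\[
xyz\,O(x,y,z)=[x^{>0}][y^{>0}][z^{>0}]\,\frac{x-\bx}{x+\bx}\cdot\frac{\by-y(x+\bx)}{\by+y(x+\bx)}\cdot\frac{\bx\bz-\bx yz-\by z+x\bz-xyz}{K(x,y,z)}.
\]
Since $o(i,j,k;n)$ is the coefficient of $x^{i+1}y^{j+1}z^{k+1}t^n$ in the left-hand side and all three exponents are positive, it equals the coefficient of the same monomial in the rational function on the right, expanded as an iterated Laurent series in $z$, then $y$, then $x$ --- the order for which Condition~\eqref{extraction} guarantees that the positive-part operators retain exactly the genuine contribution.

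The first step I would isolate is an algebraic simplification. Writing $\alpha=x+\bx$ and $\beta=\by+y\alpha$, a direct check gives $S(x,y,z)=\bz\alpha+z\beta$, so that $K=1-t(\bz\alpha+z\beta)$, while the numerator factors as $\bx\bz-\bx yz-\by z+x\bz-xyz=\bz\alpha-z\beta$; moreover the rational prefactor is exactly $\tfrac{x-\bx}{\alpha}\cdot\tfrac{\by-y\alpha}{\beta}$. Hence the entire right-hand side reads
\[
\frac{x-\bx}{\alpha}\cdot\frac{\by-y\alpha}{\beta}\cdot\frac{\bz\alpha-z\beta}{1-t(\bz\alpha+z\beta)},
\]
a nested expression in which $z$ occurs only through the innermost ($t$-deformed) factor, $y$ only through $\beta$ and the middle factor, and $x$ only through $\alpha$. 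This structure is what lets me perform the three extractions in succession, each being a one-dimensional \emph{balanced-kernel} computation.

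Concretely, I would first expand $1/\bigl(1-t(\bz\alpha+z\beta)\bigr)$ by the binomial theorem and extract $[z^{k+1}][z^{>0}]$: the two monomials produced by $\bz\alpha-z\beta$ land on the \emph{same} power of $\alpha$ and of $\beta$, so the result is a single difference of binomial coefficients (a ballot number) times $\alpha^{q}\beta^{\,n-q}$ after cancelling the $1/\beta$, with $q=(n-k)/2$. Extracting $[y^{j+1}][y^{>0}]$ from $(\by-y\alpha)\beta^{\,n-q}$ is the identical reflection applied to the Laurent polynomial $\beta=\by+y\alpha$, and extracting $[x^{i+1}][x^{>0}]$ from $(x-\bx)\alpha^{\,L}$ is again the same computation in $x$. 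Evaluating each ballot difference by the elementary identity $\binom{N}{r-1}-\binom{N}{r}=\tfrac{N!\,(2r-N-1)}{r!\,(N-r+1)!}$ produces precisely the factors $-(k+1)$, $-(j+1)$, $+(i+1)$ in the numerator; substituting $n=8m+4i+2j+3k$ turns the binomial arguments into the linear forms of the statement, and the intermediate factorials (such as $M!=(4m+2i+j+2k)!$ arising at the $y$-step) cancel against the surplus $(4m+2i+j+2k+1)!=(4m+2i+j+2k+1)\,M!$ left by the $z$-step, leaving the stated product with the lone linear factor $(4m+2i+j+2k+1)$ in the denominator. The divisibility requirements that make $q$ and the two subsequent indices integral amount exactly to $n=8m+4i+2j+3k$.

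I expect no genuine summation to arise: each of the three extractions collapses to a single difference of two binomials, because the balanced kernels \emph{reflect} rather than accumulate. The only points needing care are therefore the bookkeeping --- keeping the powers of $\alpha$ and $\beta$ aligned so that the intermediate factorials cancel --- and the justification that the staged positive-part extraction in the prescribed order discards precisely the seven non-identity orbit terms and nothing else, which is Condition~\eqref{extraction} as already verified for this model. As an independent check one may confirm that the closed form satisfies the step recurrence~\eqref{rec}: the factor $(i+1)(j+1)(k+1)$ makes it vanish on the faces $i=-1$, $j=-1$, $k=-1$ that the recurrence touches, and dividing the five-term relation by $o(i,j,k;n)$ reduces it to a rational identity in $(m,i,j,k)$.
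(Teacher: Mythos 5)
Your proposal is correct and follows exactly the paper's route: the paper establishes the positive-part identity via the algebraic kernel method in Section~\ref{sec:example-kernel-rational} and then states that ``the coefficient extraction can be performed explicitly,'' leaving the details to the reader, and your proposal supplies precisely those details. Your factorisation $S=\bz\alpha+z\beta$ with $\alpha=x+\bx$, $\beta=\by+y\alpha$, the three staged ballot-type extractions yielding the factors $-(k+1)$, $-(j+1)$, $+(i+1)$, and the cancellation of $(4m+2i+j+2k)!$ against $(4m+2i+j+2k+1)!$ all check out, as do the parity conditions, which combine to $n\equiv 4i+2j+3k \pmod 8$.
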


\subsection{When the orbit equation does not suffice}

In the study of quadrant walks, there are four models with a finite group for
which the complete generating function, denoted by $Q(x,y)$, cannot be
extracted from the orbit equation
$$
\sum_{g \in G} \text{sign}(g)\  g\!\left(xyQ(x,y)\right) = \frac{1}{K(x,y)}  \sum_{g \in G} \text{sign}(g)g(xy).
$$
They are shown in Figure~\ref{fig:alg}. In each case, the orbit sum happens to
be $0$. This implies that $Q(x,y)=1$ solves the orbit equation, which does not
determine $Q(x,y;t)$ uniquely. Three of these four models can be solved using
a half-orbit sum, followed by a more delicate extraction
procedure~\cite{BoMi10}. The fourth one was first solved using intensive
computer algebra~\cite{KaKoZe08,BoKa10}, and more recently using complex
analysis~\cite{BoKuRa13}.

For models in $\zs^3$, it also happens that we find a finite group but a
non-conclusive orbit equation. This happens for 62 three-dimensional models,
and then the orbit sum is zero\footnote{For 2D octant models, we also find one
non-conclusive orbit equation with a non-zero orbit sum, see
Section~\ref{sec:p1} for details.}. This is the case for instance with
$S(x,y,z)=\bx+ xyz+x\by +x\bz$, or for the 3-dimensional analogue of Kreweras'
walks, $S'(x,y,z)=\bx+\by+\bz+xyz$. For 43 of these 62 models, including $S$,
we were able to guess differential equations, and we have then proved the
D-finiteness of $O(x,y,z;t)$ via a combinatorial construction described in
Section~\ref{sec:hadamard} below. For the 19 others models, including $S'$, we
have not been able do guess any differential equation nor to prove
D-finiteness. We refer to Section~\ref{sec:nonHzero} for details on these 19
models.

\section{Hadamard walks}\label{sec:hadamard}

In this section, we describe how the study of some 3-dimensional models ---
called \emm Hadamard models, --- can be reduced to the study of a pair of
models, one in $\zs$ and the other in $\zs^2$. Let us begin with an example.

\subsection{Example}
\label{sec:H-example}
Let $\cS$ be the  step set with characteristic polynomial
$$ S(x,y,z)=x+ (1+x+\bx)(yz+\by+\bz). $$
The associated group is finite of order 12. The orbit of $[x,y,z]$ contains
$[x,z,y]$, and thus the orbit sum is zero. We will prove the D-finiteness of
the series $O(x,y,z;t)$ using a combinatorial argument.

Write $U=x$, $V=1+x+\bx$, $T=yz+\by+\bz$, so that $S=U+VT$. Let $\cU, \cV$ and
$\cT$ denote the corresponding step sets, respectively in $\{\bar 1 , 0, 1\}$
(for $\cU$ and $\cV$) and $\{\bar 1 , 0, 1\}^2$ (for $\cT$). We thus have
$$ \cS=\left(\cU\times \{0\}^2\right) \cup \left(\cV\times \cT\right), $$
and the union is disjoint.

Consider now an octant walk $w$ with steps in $\cS$, and project it on the
$x$-axis. This gives a walk on $\ns$ with steps $\bar 1 $, $0$ and $1$. Colour
white the steps that are projections of a step of $\cU\times \{0\}^2$ (that
is, of a step $100$), and colour black the projections of the steps of
$\cV\times \cT$. This gives a coloured walk $w_1$. Now return to $w$ and
delete the steps of $\cU\times \{0\}^2$ (that is, the steps $100$). The
resulting walk lives in $\zs\times \ns^2$. Project it on the $yz$-plane to
obtain a second walk $w_2$, which has steps in $\cT$ and is a quarter plane
walk.

The walk $w$ can be recovered from $w_1$ and $w_2$ as follows. Start from
$w_1$, and leave the white steps unchanged. Replace the $j$th black step of
$w_1$, with value $a\in \{\bar 1 ,0,1\}$, by $abc$, where $bc\in \cT$ is the
$j$th step of $w_2$.

Conversely, let $w_1$ be a walk in $\ns$ with steps in $\cU \cup \cV$ having
black and white steps, such that all steps of $\cU\setminus \cV$ are white and
all steps of $\cV\setminus \cU$ are black. In other words, the only steps for
which we can choose the colour are those of $\cU\cap \cV$. Let $w_2$ be a walk
in $\ns^2$ with steps in $\cT$, whose length coincides with the number of
black steps in $w_1$. Then the walk $w$ constructed from $w_1$ and $w_2$ as
described above is an octant walk with steps in $\cS$.

Let $C_1(x,v;t)$ be the generating function of coloured walks like $w_1$,
counted by the length ($t$), the number of black steps ($v$) and the
coordinate of the endpoint ($x$). Let $C_2(y,z;v)$ be the generating
function of walks like $w_2$, counted by the length ($v$) and the
coordinates of the endpoint ($y,z$). The above construction shows that
$$ O(x,y,z;t)= \left. C_1(x,v;t) \odot_v C_2(y,z;v)\right|_{v=1}, $$
where $\odot_v$ denotes the Hadamard product with respect to $v$:
$$ \sum_i a_iv^i \odot_v \sum_j b_j v^j= \sum_i a_ib_i v^i, $$
and the resulting series is specialised to $v=1$.

In our example, $C_1(x,v;t)$ counts (coloured) walks on $\ns$ and is easily
seen to be algebraic, while $C_2(y,z;v)$ is the generating function of
Kreweras walks, which is also algebraic~\cite{Bous05}. Since the Hadamard
product preserves D-finiteness~\cite{Li88}, we conclude that $O(x,y,z;t)$ is
D-finite.

\subsection{Definition and enumeration of Hadamard walks}
\label{sec:hadam1}

We can now generalise the above discussion to count octant walks for \emm
Hadamard models,. Since this discussion works in all dimensions, we actually
consider walks in $\ns^D$, starting from the origin and taking their steps in
a set $\cS\subset\{\bar 1 ,0,1\}^D\setminus\{(0, \ldots, 0)\}$. We denote by
$S(x_1, \ldots, x_D)$ the characteristic polynomial of $\cS$:
$$
S(x_1, \ldots, x_D)= \sum_{(i_1, \ldots, i_D)\in \cS} x_1^{i_1} \cdots
x_D^{i_D}.
$$
We also denote by $0^i$ the $i$-tuple $(0, \ldots, 0)$. Assume there exist
positive integers $d$ and $\delta$ with $d+\delta=D$, and three sets $\cU
\subset \{\bar 1 ,0,1\}^d\setminus \{0 ^d \}$, $\cV\subset \{\bar 1 ,0,1\}^d$
and $\cT \subset \{\bar 1 ,0,1\}^\delta\setminus \{0^\delta\}$, such that
\begin{equation}\label{hadamard-def}
\cS= \left( \cU \times \{0^\delta\}\right) \cup \left(\cV\times \cT\right).
\end{equation}
Note that the union is necessarily disjoint. The characteristic polynomial of
$\cS$ reads
$$
S(x_1, \ldots, x_D)= U(x_1, \ldots, x_d) + V(x_1, \ldots,
x_d)T(x_{d+1}, \ldots, x_D).
$$
We say that $\cS$ is  $(d,\delta)$\emm-Hadamard,. 

Let $ \cC_1$ be the set of walks with steps in $\cU \cup \cV$ confined to
$\ns^d$, in which the steps are coloured black and white, with the condition
that all steps of $\cU\setminus \cV$ are white and all steps of $\cV\setminus
\cU$ are black. We call these walks \emm coloured, $(\cU, \cV)$\emm-walks,.
Let $C_1(x_1, \ldots, x_d,v;t)$ be the associated generating function, where
$t$ keeps track of the length, $x_1, \ldots, x_d$ of the coordinates of the
endpoint, and $v$ of the number of black steps. Equivalently,
\begin{equation}\label{C1}
C_1(x_1, \ldots, x_d,v;t)=\sum_w x_1^{i_1(w)} \cdots
x_d^{i_d(w)}(1+v)^{|w|_{\cU \cap \cV}}v^{|w|_{\cV\setminus \cU}} t^{|w|},
\end{equation}
where the sum runs over all \emm uncoloured, walks $w$ in $\ns^d$ with steps
in $\cU\cup\cV$, the values $i_1(w), \ldots,i_d(w)$ are the coordinates of the
endpoint and $|w|_\cW$ stands for the number of steps of $w$ belonging to
$\cW$, for any step set $\cW\subset \cU\cup\cV$. Let $C_2(x_{d+1}, \ldots,
x_D;v)$ be the generating function of $\cT$-walks confined to $\ns^\delta$,
counted by the length ($v$) and the coordinates of the endpoint ($x_{d+1},
\ldots, x_D$).
\begin{prop}\label{prop:hadamard}
Assume $\cS$ is $(d,\delta)$-Hadamard, given
by~\eqref{hadamard-def}. With the above notation, the series $O(x_1, \ldots,
x_D;t)$ that counts $\cS$-walks confined to $\ns^D$ is 
$$
O(x_1, \ldots,x_D;t)= \left. C_1(x_1, \ldots, x_d,v;t) \odot_v
  C_2(x_{d+1}, \ldots, x_D;v)\right|_{v=1},
$$
where $\odot_v$ denotes the Hadamard product with respect to $v$.

In particular, $O$ is D-finite (in all its variables) if $ C_1$ and $C_2$ are
D-finite (in all their variables).
\end{prop}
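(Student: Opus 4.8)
The plan is to prove Proposition~\ref{prop:hadamard} by establishing a length- and endpoint-preserving bijection between $\cS$-walks confined to $\ns^D$ and suitable pairs $(w_1, w_2)$ of walks, exactly mirroring the construction already carried out in the example of Section~\ref{sec:H-example}, and then to translate this bijection into the claimed Hadamard-product identity. The Hadamard decomposition~\eqref{hadamard-def} is the crucial structural input: every step of $\cS$ either lies in $\cU\times\{0^\delta\}$, affecting only the first $d$ coordinates and leaving the last $\delta$ coordinates fixed, or lies in $\cV\times\cT$, acting as a $\cV$-step on the first $d$ coordinates and a (nonzero) $\cT$-step on the last $\delta$ coordinates. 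Since the last $\delta$ coordinates are never moved by steps of the first kind, the confinement of the walk to $\ns^D$ splits into two independent positivity requirements.

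First I would make the bijection precise. Given an $\cS$-walk $w$ confined to $\ns^D$, define $w_1$ as the projection of $w$ onto its first $d$ coordinates, coloured white at each step coming from $\cU\times\{0^\delta\}$ and black at each step coming from $\cV\times\cT$; the colouring convention forces steps in $\cU\setminus\cV$ to be white and steps in $\cV\setminus\cU$ to be black, with only steps in $\cU\cap\cV$ genuinely bicolourable, so $w_1$ is a coloured $(\cU,\cV)$-walk in $\ns^d$. Define $w_2$ as the sequence of $\cT$-components read off, in order, from the black steps of $w$; this is a $\cT$-walk whose length equals the number of black steps of $w_1$. Conversely, given a coloured $(\cU,\cV)$-walk $w_1$ in $\ns^d$ and a $\cT$-walk $w_2$ in $\ns^\delta$ whose length matches the number of black steps of $w_1$, reinsert the $j$th step of $w_2$ into the $j$th black step of $w_1$ to reconstruct a word in $\cS$. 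The key verification is that the reconstructed word stays in $\ns^D$: its first $d$ coordinates agree prefix-by-prefix with those of $w_1$ (hence are non-negative because $w_1$ is confined to $\ns^d$), while its last $\delta$ coordinates, at any prefix, equal the corresponding partial sum of $\cT$-steps used so far, which is a prefix of $w_2$ (hence non-negative because $w_2$ is confined to $\ns^\delta$). These two checks rely precisely on the fact that white steps never touch the last $\delta$ coordinates.

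Next I would extract the generating-function consequence. Under the bijection, the length of $w$ is the length of $w_1$, the first $d$ endpoint coordinates are those of $w_1$, the last $\delta$ endpoint coordinates are those of $w_2$, and the number of black steps of $w_1$ equals the length of $w_2$. The variable $v$ in $C_1$, as set up in~\eqref{C1}, marks the number of black steps, and the factor $(1+v)^{|w|_{\cU\cap\cV}}$ correctly accounts for the free colour choice on $\cU\cap\cV$ steps; in $C_2(x_{d+1},\dots,x_D;v)$ the variable $v$ marks the length. Summing over all pairs $(w_1,w_2)$ and matching the $v$-exponent of $w_1$ (its number of black steps) against the $v$-exponent of $w_2$ (its length) is exactly the operation $\odot_v$ followed by $v=1$, which yields
$$
O(x_1,\dots,x_D;t)=\left.C_1(x_1,\dots,x_d,v;t)\odot_v C_2(x_{d+1},\dots,x_D;v)\right|_{v=1}.
$$
The factor $t^{|w|}$ carries through unchanged since only $w_1$ records length.

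Finally, the D-finiteness claim follows from general closure properties: the Hadamard product of two D-finite series is D-finite by~\cite{Li88}, and specialising a D-finite series at $v=1$ preserves D-finiteness in the remaining variables. I expect the main obstacle to be stating the two positivity checks in the reconstruction direction with enough care that the reader is convinced the confinement of $w$ to $\ns^D$ decouples cleanly into the confinement of $w_1$ to $\ns^d$ and of $w_2$ to $\ns^\delta$; everything else is bookkeeping. One subtlety worth flagging explicitly is that the $v$-marking in $C_1$ counts black steps whereas in $C_2$ it counts \emph{length}, so the Hadamard product is pairing the right two statistics, and the disjointness of the union in~\eqref{hadamard-def} is what makes the colouring (and hence the whole correspondence) unambiguous.
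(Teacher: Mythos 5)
Your proof is correct and is precisely the argument the paper intends: the paper states that the proof of Proposition~\ref{prop:hadamard} ``is a direct extension of the argument given in Section~\ref{sec:H-example}, and is left to the reader,'' and your write-up carries out exactly that extension --- the colour-coded projection/reinsertion bijection, the decoupling of the two positivity constraints, and the translation of the black-step count versus length pairing into the Hadamard product $\odot_v$ followed by $v=1$. Your closing remarks on the disjointness of the union in~\eqref{hadamard-def} and on the closure of D-finiteness under Hadamard products and specialisation match the paper's reliance on~\cite{Li88}, so nothing is missing.
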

The proof is a direct extension of the argument given in
Section~\ref{sec:H-example}, and is left to the reader. We will
also use the following simple observation.
\begin{obs}\label{mixt}
 The generating function of $\cS$-walks   confined to $\ns^d\times
\zs^\delta$, counted by the length and the coordinates of the
endpoint, is
$$ C_1\left(x_1, \ldots, x_d, T(x_{d+1}, \ldots, x_D);t\right). $$
\end{obs}
We now  specialise the Hadamard decomposition to octant walks.

\subsection{The case of $\boldsymbol{(1,2)}$-Hadamard walks}
Assume that the model $\cS$ is $(1,2)$-Hadamard:
$$ S(x,y,z)= U(x)+ V(x)T(y,z). $$
This was the case with the model of Section~\ref{sec:H-example}. Since the
sets $\cU$ and $\cV$ are (at most) 1-dimensional, the generating function $
C_1(x,v;t)$ counting coloured $(\cU, \cV)$-walks is always D-finite (in fact,
algebraic). Proposition~\ref{prop:hadamard} specialises as follows.
\begin{prop}\label{prop:12}
  If $\cS$ is $(1,2)$-Hadamard with $\cS=\left(\cU\times
  \{0\}^2\right) \cup \left(\cV\times \cT\right)$, the
  generating function $O(x,y,z;t)$ of $\cS$-walks 
  confined to the octant is D-finite as soon as 
  the series $C_2(y,z;t)$ that counts $\cT$-walks in the quadrant is D-finite.
\end{prop}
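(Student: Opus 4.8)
The plan is to derive Proposition~\ref{prop:12} as an immediate specialisation of Proposition~\ref{prop:hadamard}, so the real work is already done. First I would invoke the $(1,2)$-Hadamard hypothesis $S(x,y,z)=U(x)+V(x)T(y,z)$, which by definition means $\cS=(\cU\times\{0\}^2)\cup(\cV\times\cT)$ with $d=1$ and $\delta=2$. Proposition~\ref{prop:hadamard} then gives
$$
O(x,y,z;t)=\left. C_1(x,v;t)\odot_v C_2(y,z;v)\right|_{v=1},
$$
where $C_1$ counts coloured $(\cU,\cV)$-walks on $\ns$ and $C_2$ counts $\cT$-walks in the quadrant $\ns^2$. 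So the entire statement reduces to showing that each factor on the right is D-finite and that the Hadamard product, followed by the specialisation $v=1$, preserves D-finiteness.

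The key observation is that the first factor is \emph{always} D-finite, independently of any hypothesis on $\cT$. Since $\cU,\cV\subset\{\bar 1,0,1\}$, the coloured walks counted by $C_1(x,v;t)$ are one-dimensional walks on the half-line $\ns$ (with steps weighted by the colouring variable $v$). As recalled in the introduction and in Section~\ref{sec:dim}, walks confined to a half-line have an algebraic generating function, and algebraic series are D-finite; the extra catalytic variable $v$ merely tracks a linear statistic (the number of black steps) and does not disturb algebraicity. Thus $C_1(x,v;t)$ is D-finite in all its variables $x$, $v$, $t$. This is exactly the sentence in the paper preceding the proposition, so I would cite it rather than reprove it.

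The second factor $C_2(y,z;v)$ is D-finite precisely by the hypothesis of the proposition. It therefore only remains to combine the two. Here I would use the two closure properties already cited in Section~\ref{sec:hadamard}: the Hadamard product preserves D-finiteness~\cite{Li88}, so $C_1(x,v;t)\odot_v C_2(y,z;v)$ is D-finite in $x,y,z,v,t$; and specialising a D-finite series at $v=1$ (a particular value of one variable) again yields a D-finite series in the remaining variables. Concatenating these two steps shows that $O(x,y,z;t)$ is D-finite in $x$, $y$, $z$, $t$, which is the claim.

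The only point that needs a little care—and the closest thing to an obstacle—is the specialisation $v=1$: one must check that substituting $v=1$ is legitimate, i.e.\ that the relevant coefficients do not blow up and that the substituted series is still a well-defined element of $\qs[x,y,z][[t]]$. This is harmless here because the Hadamard product $C_1\odot_v C_2$ is, coefficient-wise in $t$, a polynomial in $v$ (each length-$n$ walk contributes finitely many black steps), so evaluation at $v=1$ is just summation of finitely many terms per power of $t$ and trivially preserves D-finiteness. I would remark that this is exactly the mechanism illustrated in the worked example of Section~\ref{sec:H-example}, so the proof of Proposition~\ref{prop:12} is genuinely a one-line corollary of Proposition~\ref{prop:hadamard} together with the algebraicity of half-line walks, and can be left to the reader.
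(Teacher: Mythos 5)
Your proof is correct and follows essentially the same route as the paper, which likewise derives Proposition~\ref{prop:12} as an immediate specialisation of Proposition~\ref{prop:hadamard}, justified by the observation (stated just before the proposition) that $C_1(x,v;t)$ is always algebraic, hence D-finite, because $\cU$ and $\cV$ are at most one-dimensional. Your extra verification that the specialisation $v=1$ is well defined coefficient-wise in $t$ is a harmless refinement of what the paper leaves implicit.
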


\subsection{The case of $\boldsymbol{(2,1)}$-Hadamard walks and the reflection
  principle}
Assume that the model  $\cS$ is $(2,1)$-Hadamard:
$$ S(x,y,z)= U(x,y)+ V(x,y)T(z). $$
Since the set $\cT$ is (at most) 1-dimensional, the generating function $
C_2(z;t)$ that counts $\cT$-walks in $\ns$ is always D-finite (in fact,
algebraic).

\begin{prop}\label{prop:21}
If $\cS$ is $(2,1)$-Hadamard with $\cS=\left(\cU\times \{0\}\right) \cup
\left(\cV\times \cT\right)$, then $O(x,y,z;t)$ is D-finite as soon as the
series $C_1(x,y,v;t)$ that counts coloured $(\cU, \cV)$-walks in the quadrant
is D-finite.

Moreover, if $T(z)=z+\bar z$, then 
$$ O(x,y,z;t)=[z^{\ge 0}] (1-\bz ^2) Q(x,y,z;t), $$
where $Q(x,y,z;t)$ counts $\cS$-walks confined to $\ns^2\times \zs$.
\end{prop}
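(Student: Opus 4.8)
The plan is to treat the two assertions separately, since the first is a direct specialisation of Proposition~\ref{prop:hadamard} while the second rests on a reflection argument. For the D-finiteness claim I would invoke Proposition~\ref{prop:hadamard} with $d=2$ and $\delta=1$, which gives
$$ O(x,y,z;t)= \left. C_1(x,y,v;t)\odot_v C_2(z;v)\right|_{v=1}. $$
Here $C_2(z;v)$ counts $\cT$-walks confined to $\ns$, and since $\cT\subset\{\bar 1,0,1\}$ is one-dimensional these are half-line walks, hence algebraic and in particular D-finite. As the Hadamard product preserves D-finiteness~\cite{Li88}, and as specialising $v=1$ is a substitution that preserves it as well, D-finiteness of $C_1$ forces D-finiteness of $O$. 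This part is routine.

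For the reflection identity I would write $Q(x,y,z;t)=\sum_{k\in\zs}Q_k(x,y;t)\,z^k$, where $Q_k$ counts $\cS$-walks confined to $\ns^2\times\zs$ (so $x,y\ge0$ throughout, but $z$ free) ending at $z$-height $k$. Expanding the right-hand side gives
$$ [z^{\ge0}](1-\bz^2)Q(x,y,z;t)=\sum_{k\ge0}\bigl(Q_k(x,y;t)-Q_{k+2}(x,y;t)\bigr)z^k, $$
so the statement reduces to proving, for each $k\ge0$, that $O_k=Q_k-Q_{k+2}$, where $O_k$ is the coefficient of $z^k$ in $O$, counting walks that additionally satisfy $z\ge0$ throughout.

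Two ingredients then finish the proof. First, a symmetry: negating every $z$-increment sends a step $(v,1)$ to $(v,\bar 1)$ and vice versa, leaves each step of $\cU\times\{0\}$ unchanged, and, because $T(z)=z+\bz$ means $\cT=\{1,\bar 1\}$ is symmetric, maps $\cS$-walks in $\ns^2\times\zs$ bijectively to such walks while negating the final $z$-coordinate. Hence $Q(x,y,z)=Q(x,y,\bz)$, so $Q_{k+2}=Q_{-k-2}$. Second, the reflection principle: a walk counted by $Q_k$ with $k\ge0$ either stays in $z\ge0$ (contributing to $O_k$) or first reaches the level $z=-1$, in which case I reflect the portion of the walk following its first visit to $z=-1$ via $z\mapsto -2-z$. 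This map negates the $z$-increments of the reflected suffix, fixes the first-passage time, and is an involution; it produces a valid $\cS$-walk confined to $\ns^2\times\zs$ now ending at height $-2-k$, and it is a bijection onto the walks counted by $Q_{-k-2}$ (all of which must dip to $-1$). Therefore $O_k=Q_k-Q_{-k-2}=Q_k-Q_{k+2}$, which is exactly what was needed.

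The main obstacle — and the reason the Hadamard hypothesis is indispensable — is verifying that reflection keeps us inside the admissible class of walks. The key point is that reflecting the $z$-coordinate does not touch $x$ or $y$ at all, so the quadrant confinement in $(x,y)$ is preserved \emph{automatically}; this is precisely what the product structure $\cS=(\cU\times\{0\})\cup(\cV\times\cT)$ buys us, and it would fail for a general step set. The symmetry of $\cT=\{1,\bar 1\}$ plays a double role: it guarantees that the reflected steps are still steps of $\cS$, and it is what lets me rewrite $Q_{-k-2}$ as $Q_{k+2}$ so that the final answer collapses into the compact form $[z^{\ge0}](1-\bz^2)Q$.
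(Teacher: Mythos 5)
Your proof is correct, and for the reflection identity you take the route the paper deliberately sets aside. For the first statement both you and the paper simply specialise Proposition~\ref{prop:hadamard} with $(d,\delta)=(2,1)$, using that $C_2(z;v)$ counts half-line walks and is therefore algebraic. For the second statement, however, the paper stays inside the Hadamard framework: it applies the one-dimensional reflection principle to the factor $C_2$, writing $C_2(z;v)=[z^{\ge 0}]\frac{1-\bz^2}{1-v(z+\bz)}$, expands $C_1=\sum_k C_{1,k}v^k$, and reassembles the Hadamard product term by term, identifying $\sum_k C_{1,k}(x,y;t)(z+\bz)^k$ with $Q(x,y,z;t)$ via Observation~\ref{mixt}. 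You instead apply the reflection principle directly to the three-dimensional walks in $\ns^2\times\zs$: decomposing $Q=\sum_k Q_k z^k$, reflecting the suffix after the first passage to $z=-1$ via $z\mapsto -2-z$, and using the $z\mapsto\bz$ symmetry of $Q$ to get $O_k=Q_k-Q_{-k-2}=Q_k-Q_{k+2}$. The paper explicitly acknowledges this alternative ("this identity can also be obtained by applying the reflection principle to $\cS$-walks confined to $\ns^2\times\zs$") but prefers its own derivation because it "underlines the connection with Proposition~\ref{prop:hadamard}". Your version is more elementary and self-contained, at the cost of having to verify by hand what the paper's factorisation gives for free: that the reflection preserves the step set and the $(x,y)$-confinement. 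You check both correctly, and rightly locate the crucial point in the product structure; one could sharpen your closing remark by observing that being $(2,1)$-Hadamard with $T(z)=z+\bz$ is in fact \emph{equivalent} to the step set being symmetric under negation of the $z$-coordinate, which is precisely the hypothesis your reflection needs.
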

\noindent Note that $T(z)$ \emm must be, equal to $(z+\bz)$ for a 3D model.
\begin{proof}
The first statement just paraphrases the second part of
Proposition~\ref{prop:hadamard}. Now assume that $T(z)=z+\bar z$,
and let us apply  the first statement of
Proposition~\ref{prop:hadamard}. The reflection principle for walks on
a line gives 
$$
C_2(z;v)= [z^{\ge 0}] \frac{1-\bz^2}{1-v(z+\bz)} = \sum_{k\ge 0} v^k  [z^{\ge 0}] ({1-\bz^2})(z+\bz)^k.
$$
Let us now write $ C_1(x,y,v;t)= \sum_{k\ge 0}  C_{1,k}(x,y;t)v^k$. Then
\begin{eqnarray*}
O(x,y,z;t)&=& \sum_{k\ge 0}  C_{1,k}(x,y;t) [z^{\ge 0}] ({1-\bz^2})(z+\bz)^k
\\
&=&[z^{\ge 0}] ({1-\bz^2}) \sum_{k\ge 0}  C_{1,k}(x,y;t)(z+\bz)^k
\\
&=&[z^{\ge 0}] ({1-\bz^2}) Q(x,y,z;t)
\end{eqnarray*}
by Observation~\ref{mixt}. This identity can also be obtained by
applying the reflection principle to $\cS$-walks confined to
$\ns^2\times \zs$, but this
proof underlines the connection with Proposition~\ref{prop:hadamard}. 
\end{proof}

\begin{example}\label{ex:hadamard-gessel} 
{\bf A $\boldsymbol{(2,1)}$-Hadamard model.}
Take 
$$
S(x,y,z)= x+ xy+ (\bx+\bx\by)(z+\bz).
$$
This is a $(2,1)$-Hadamard model where $\cU\cap \cV=\emptyset$. The series $
C_1(x,y,v;t) $ counts the so-called Gessel walks by the length ($t$), the
coordinates of the endpoint ($x,y$), and the number of $x^-$ steps ($v$). It
can be expressed in terms of the complete generating function $Q(x,y;t)$ of
Gessel walks, which is known to be algebraic~\cite{BoKa10}. Indeed, $$
C_1(x,y,v;t) =Q(x\sqrt{\bv},y;t\sqrt v)$$ with $\bv=1/v$. The series
$C_2(y;v)$ is the (algebraic) generating function of $\pm 1$-walks in $\ns$,
and $O(x,y,z;t)$ is thus D-finite. The associated group is finite of order 16,
with a zero orbit sum.
\end{example}

\section{Three-dimensional octant models}
\label{sec:ThreeDim}

Starting from the 35\,548 non-equivalent models with at most 6 steps that have
dimension 2 or 3 (Proposition~\ref{prop:interesting}), we first apply the
linear programming technique described at the end of Section~\ref{sec:dim} to
determine their dimension. Doing this using the built-in solver of
Sage~\cite{sage} requires no more than a few minutes of computation time (in
total). We thus obtain 20\,804 truly three-dimensional models.

We analyse them with the tools of Sections~\ref{sec:summary}
to~\ref{sec:hadamard}. Our results are summarised by Table~\ref{table:3D}
displayed in Section~\ref{sec:summary}. On the experimental side, we go
through all models and determine which of them appear to have a finite group.
We find only 170 models with a finite group, of order 8, 12, 16, 24 or 48. The
20\,634 remaining models have a group of order at least 200, which we
conjecture to be infinite. This includes for instance the highly symmetric
model $S=x\by\bz+\bx y\bz+ \bx\by z+xyz$ mentioned in~\cite[Sec.~4.1]{BoKa10}.

Among the 170 models with a finite group, 108 have a non-zero orbit sum. The
algebraic kernel method of Section~\ref{sec:kernel} proves that their complete
generating function is D-finite: up to a permutation of coordinates, the
series $xyzO(x,y,z;t)$ is obtained by extracting the positive part in $z$, $y$
and $x$ in a rational function, as in~\eqref{extracted}. The correctness of
the extraction argument is guaranteed by Condition~\eqref{extraction}, which
holds in each of these 108 cases.

There are 62 models left where the orbit sum is zero. Among these, 43 can be
recognised as Hadamard and have a D-finite generating functions. The remaining
19 are not Hadamard.

\subsection{The Hadamard models with zero orbit sum}
\label{sec:Hzero}

Let us give a few details on these 43 Hadamard models. Among them, 31 are
$(1,2)$-Hadamard and 17 are $(2,1)$-Hadamard (up to a permutation of
coordinates). This means that 5 are both $(1,2)$- and $(2,1)$-Hadamard; this
happens when $\cU=\emptyset$ or $\cV=\emptyset$. For the $(1,2)$-Hadamard
models, we apply Proposition~\ref{prop:12}. The set~$\cT$ is found to be
either Kreweras' model $\{\bone0,0\bone, 11\}$, or its reverse
$\{10,01,\bone\bone\}$ or Gessel's model $\{10,\bone0, 11,\bone\bone\}$. In
all three cases, the generating function $C_2(y,z;v)$ is known to be
algebraic~\cite{gessel-proba,Bous05,BoKa10}, so that $O(x,y,z;t)$ is D-finite
by Proposition~\ref{prop:12}.

For the $(2,1)$-Hadamard models, we apply Proposition~\ref{prop:21}, and thus
need to check if $C_1(x,y,v;t)$ is D-finite. The set $\cU\cup \cV$ is found to
be either Kreweras' model, or its reverse, or Gessel's model (in each case,
possibly with an additional null step $00$). Since there are only three steps
in Kreweras' model, the associated complete generating function $Q(x,y;t)$
records in fact the number of steps of each type (after some algebraic changes
of variables). Moreover, adding a null step and recording its number of
occurrences preserves algebraicity. This implies that in the Kreweras case,
the series $C_1(x,y,v;t)$ defined by~\eqref{C1} is obtained by algebraic
substitutions in $Q(x,y;t)$ and is thus algebraic. The same holds in the
reverse Kreweras case. This leaves us with five models for which $\cU\cup \cV$
is Gessel's model (possibly with a null step). It is known that the associated
complete generating function $Q(x,y;t)$ is algebraic, but does it imply that
$C_1(x,y,v;t)$ is also algebraic? The answer is yes. In all five cases,
$\cU\cap \cV=\emptyset$, so that we just have to check that we can keep track
of the number of $\cV$-steps (see~\eqref{C1}). The five sets $\cV$ are
$\{\bone0,\bone\bone\}$ (this is Example~\ref{ex:hadamard-gessel}),
$\{10,11\}$, $\{01,\bone\bone\}$, $\{0\bone,11\}$ and finally $\{00\}$ (in the
latter case, the model is also $(1,2)$-Hadamard). In each case, it is readily
checked that $C_1(x,y,v;t)$ is obtained by algebraic substitutions in
$Q(x,y;t)$ and is thus algebraic.

\subsection{The non-Hadamard models with zero orbit sum}
\label{sec:nonHzero}

These 19 models, shown in Figure~\ref{fig:19}, remain mysterious. We are
tempted to believe that they are not D-finite, which would mean that the nice
correspondence between a finite group and D-finiteness observed for quadrant
walks does not extend to octant walks. We note that models associated with
isomorphic groups may behave very differently: for instance, if we change the
sign in the $x$-coordinate of each step of the second model of
Figure~\ref{fig:19}, we obtain a model with an isomorphic group
(see~\cite[Lemma~2]{BoMi10}), but a non-zero orbit sum. This new model can be
solved using the kernel method and has a D-finite series.

\begin{figure}[bht]
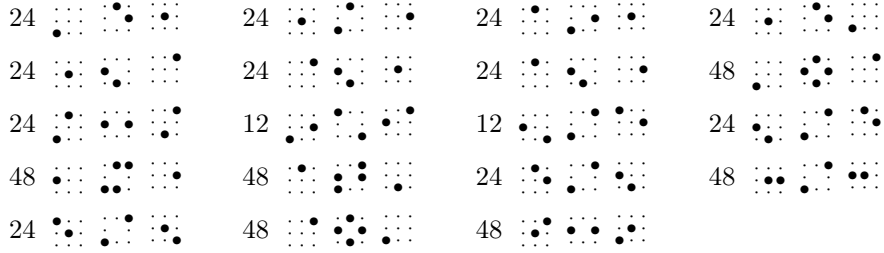

  \centering
  \begin{tabular}{c@{\qquad}c@{\qquad}c@{\qquad}c}
24\ \Stepset10000000000001010000010000 
&24\ \Stepset00001000010000010000001000 
&24\ \Stepset00000001010001000000010000 
&24\ \Stepset00001000000001010100000000 
\\24\ \Stepset00001000001010000000000001 
&24\ \Stepset00000000101010000000010000 
&24\ \Stepset00000001001010000000001000 
&48\ \Stepset10000000001011010000000001 
\\24\ \Stepset10000001000011000010000001 
&12\ \Stepset100 001 000    001 00 100   000 100 001 
&12\ \Stepset00110000010000001000001100 
&24\ \Stepset01010000010000001000001010 
\\48\ \Stepset00010000011000011000001000 
&48\ \Stepset00000001010011001010000000 
&24\ \Stepset00000101010000001010100000 
&48\ \Stepset00001100010000001000110000 
\\24\ \Stepset00001010010000001001010000 
&48\ \Stepset00000000101011010100000000 
&48\ \Stepset00001000100011000100010000 
\end{tabular}  
\caption{The 19 non-Hadamard 3D models with a zero orbit sum, with the
  order of the associated group.  The first one on 
the second line is the natural 3D analogue of Kreweras' model.}
\label{fig:19}
\end{figure}

We have tried hard to guess differential equations for the 19 mysterious
models. In each case, we have calculated the first 5000 terms of the
generating functions $O(x_0,y_0,z_0;t)$ for any choice of $x_0,y_0,z_0$ in
$\{0,1\}$, but we did not find any equation. It could still be that some or
all of the models are D-finite but the equations are so large that 5000 terms
are not enough to detect them. Unfortunately, it is quite hard to compute more
terms, even if the calculations are restricted to a finite field for better
efficiency. The default algorithm based on the recurrence relation~\eqref{rec}
then still takes $O(n^4)$ time and $O(n^3)$ space, which already for $n=5000$
required us to employ a supercomputer (the one we used has 2048 processors and
16000 gigabyte of main memory).

For getting an idea about the expected sizes of equations, we have also
guessed equations for all the 43 Hadamard models with finite group and zero
orbit sum. The formulas given in Section~\ref{sec:hadamard} for their
generating functions give rise to more efficient algorithms for computing
their series expansion. It turns out that the biggest equation appears for the
model
\Stepset00010000110001000000100001, 
for $x=y=z=1$.
It has order~55 and degree~3815, and we needed 20000 terms to construct it.
This suggests that we may not have enough terms in our 19 models to guess
differential equations.

However, we observed that even for the models where $O(1,1,1;t)$ satisfies
only extremely large equations, at least one of the other specialisations
$O(x_0,y_0,z_0;t)$ with $(x_0,y_0,z_0) \in \{0,1\}^3$ satisfies a small
equation that can be recovered from a few hundred terms already. This is in
contrast to the 19 non-Hadamard step sets, where none of these series
satisfies an equation that could be found with 5000 terms. For this reason, we
have some doubts that these models are D-finite.

\section{Two-dimensional octant models}
\label{sec:TwoDim}
We are now left with the 14\,744 two-dimensional models found among the
35\,548 octant models with at most 6 steps
(Proposition~\ref{prop:interesting}). At the moment, we have only studied
their projection on the relevant quadrant:

\begin{quote}
  \emph{For a 2D octant model where the $z$-condition is
    redundant, we  focus on  the series
    $Q(x,y;t):=O(x,y,1;t)$. This series counts quadrant walks with steps in
    $\cS'= \{ij:  ijk \in \cS\}$, which should be thought of as a
    {\emph{multiset}} of steps.}
\end{quote}
For instance, if $\cS$ is the 2D model of Example~\ref{ex:2D}, then
$\cS'=\{0\bone, \bone 1 , \bone 1, 10\}$ contains two copies of the
step $\bone 1$. 

Note that the nature of $Q(x,y;t)$ is not affected by adding a step $00$ to
$\cS'$: it corresponds to substituting $t \mapsto t/(1-t)$ in the generating
function. This is why we do not consider models with a null step.

\subsection{Quadrant models obtained by projection}
We obtain by projection  527 models of
cardinality at  most 6 (after deleting the 00 step, and
identifying models that only differ by an $xy$ symmetry). 
Those with no repeated 
step were  previously classified by Bousquet-M\'elou and
Mishna~\cite{BoMi10}, but we also have models with repeated steps.

\begin{table}
\def\algebraic{\leaf{\fbox{\textbf{algebraic}}}}
\def\dfinite{\leaf{\fbox{\textbf{D-finite}}}}
\def\cdfinite{\leaf{\fbox{\textbf{D-finite?}}}}
\def\notdfinite{\leaf{\fbox{\textbf{not D-finite?}}}}
\edgeheight=5pt\nodeskip=0pt\leavevmode\kern-1.2em
\tree{\begin{tabular}{c}
2D quadrant models with $\leq6$ steps obtained by projection\\
$527=[7,41,141,338]$
\end{tabular}}
{
      \tree{\begin{tabular}{c}
$|G|{<}\infty$\\
$118=[5,19,35,59]$
\end{tabular}}{
        \tree{\begin{tabular}{c}
OS${\neq}0$\\
$95=[3,14,29,49]$
\end{tabular}}
{\tree{\begin{tabular}{c}kernel method\\
$94=[3,14,29,48]$\end{tabular}}
{\dfinite}\tree{
\begin{tabular}{c}
model $ \cS_1$\\
extraction fails,\\
computer alg.
\end{tabular}}
{\dfinite}}
        \tree{\begin{tabular}{c}
OS${=}0$\\
$23=[2,5,6,10]$
\end{tabular}}
{\tree{\begin{tabular}{c}
related to a \\
model of Fig.~\ref{fig:alg}\\
$22=[2,5,6,9]$
\end{tabular}}
{\algebraic}
\tree{\begin{tabular}{c}
model $\overline \cS_1$\\
(Fig.~\ref{fig:quatre})\\
computer alg.
\end{tabular}}{\algebraic}
      }}
      \tree{\begin{tabular}{c}
$|G|{=}\infty$ ?\\
$409=[2,22,106,279]$
\end{tabular}}{\notdfinite}
    }
\caption{Summary of our results and conjectures for the 527 quadrant models
  with at most 6 steps obtained by projection of a 2D octant model. The numbers in brackets give for
  each class the number of models of cardinality 3, 4, 5 and 6.}
\label{table:2Drepeated}
\end{table}

As in the 3D case, we first try to determine, experimentally, when the
group $G$ is finite and when the specialisations of $Q(x,y;t)$
obtained when $\{x,y\}\subset \{0,1\}$ are D-finite.
Our results are summarised in Table~\ref{table:2Drepeated}. 

We find 118 models with a finite group, of order at most 8 (more precisely, of
order 4, 6 or 8). The remaining ones have a group of order at least 200, which
we conjecture to be infinite. (This is known for models with no repeated
step~\cite{BoMi10}, and it is likely that the methods of~\cite{BoMi10} can
prove the other cases as well.) We observe that for each model for which
$Q(1,1;t)$ is guessed D-finite, the group is finite.

Among the 118 models with a finite group, 95 have a non-zero orbit sum. For 94
of them, the algebraic kernel method establishes the D-finiteness of
$Q(x,y;t)$. For 92 of these 94 models, the extraction of the positive part in
$x$ and $y$ is justified by the fact that every element $[x',y']$ of the
orbit, other than $[x,y]$, satisfies the 2D counterpart of~\eqref{extraction},
namely
\begin{equation}\label{extraction2D}
\{x',y'\} \subset \qs(x)[\by] \quad \hbox{or } \quad \{x',y'\} \subset \qs[\bx,
y]
\end{equation}
(up to a permutation of coordinates). This property \emm does not hold, for
the remaining two models $\cS_0$ and $\overline \cS_0$, shown in
Figure~\ref{fig:quatre}, but the extraction is still valid, as detailed in
Section~\ref{sec:kernel-interesting}. The 95th model with a finite group and a
non-zero orbit sum is the model $\cS_1$ also shown in Figure~\ref{fig:quatre}.
As will be detailed in Section~\ref{sec:p1}, it shares with models with a zero
orbit sum the property that the orbit equation does \emm not, characterise the
series $Q(x,y;t)$. We will prove by computer algebra that this series is still
D-finite, but transcendental (see Section~\ref{sec:p1}).

\begin{figure}[ht]
\begin{center}
\hskip -10mm\scalebox{0.9}{\input{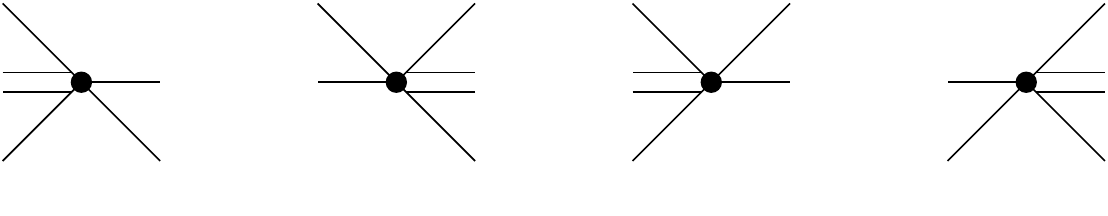_t}}
\caption{Four interesting quadrant models with repeated steps, only
  differing by a symmetry of the square. All are D-finite, and
  $\overline \cS_1$ is even algebraic. See
  Section~\ref{sec:kernel-interesting} for $\cS_0$ 
and $\overline \cS_0$, Section~\ref{sec:p2} for $\overline\cS_1$ and
Section~\ref{sec:p1} for $\cS_1$. Note that with only one copy of the
repeated step, none of these models would be D-finite~\cite{BoRaSa12}. } 
\label{fig:quatre}
\end{center}
\end{figure}

We are now left with the 23 models having a zero orbit sum. Exactly 22 of them
are obtained from the four algebraic quadrant models of Figure~\ref{fig:alg}
by possibly repeating some steps. In each of these 22 cases, the generating
function of the models with multiple steps can be written as an algebraic
substitution in the complete generating function of the model with no
repeated steps. This is obvious in the Kreweras or reverse Kreweras case,
since their complete generating function actually keeps track of the number
of occurrences of each step. This is also obvious in the double Kreweras case:
it has already 6 steps, so that it only occurs in our list with non-repeated
steps. In the Gessel case, the steps are at most duplicated, and the set $\cV$
of duplicated steps is $\{\bone 0, \bone\bone\}$, $\{10,11\}$,
$\{01,\bone\bone\}$, or $\{0\bone,11\}$. This is the same list as that of
Section~\ref{sec:Hzero}, and in each case, the complete generating function
of the model with repetitions is obtained by an algebraic substitution in the
(algebraic) generating function of Gessel's walks.

The 23rd model with a finite group and zero orbit sum is the model $\overline
\cS_1$ of Figure~\ref{fig:quatre}. Using the technique applied to Gessel's
model in~\cite{BoKa10}, we will prove that its generating function
$Q(x,y;t)$ is algebraic (Section~\ref{sec:p2}). Moreover, the series
$Q(0,y;t)$ has nice hypergeometric coefficients.

\subsection{Two interesting applications of the kernel method}
\label{sec:kernel-interesting}
We discuss here the two models for which the orbit sum is non-zero,
and where the kernel method works even though
Condition~\eqref{extraction2D} does not hold. These models are
$\cS_0=\{\bone \bone, \bone 0, \bone 0, \bone 1, 10, 1\bone\}$ (with a
repeated west step) and its
reverse $\overline{\cS}_0=\{11,10,10,1\bone, \bone 0, \bone 1\}$
(Figure~\ref{fig:quatre}). In both 
cases, the group $G$ has order 6. 

For $\cS_0$, the non-trivial elements in the orbit of $[x,y]$ are
\begin{multline*}
\quad [\bx(1+y),y],  \quad [\bx(1+y),\by +\bx^2\by(1+y)^2],
\quad [\bx +(x+\bx )\by,\by +\bx^2\by(1+y)^2], \\
 \quad [\bx +(x+\bx )\by,\by(1+x^2)], \quad
[x,\by(1+x^2)]  .
\end{multline*}
All coordinates are Laurent polynomials in $x$ and $y$. Moreover, all elements
$[x',y']$ satisfy~\eqref{extraction2D}, with the exception of the third.
Still, all monomials $x^i y^j$ occurring in this element satisfy $i+j\le 0$.
Hence this is also true for the monomials occurring in $x'y' Q(x',y')$, so
that this series does not contribute when one extracts the positive part (in
$x$ and $y$) of the orbit sum. One thus obtains
$$
xyQ(x,y)=[x^{>0}][y^{>0}] \frac{(x-\bx-x\by-\bx\by) (x-\bx-\bx y) (\bx
  y-x\by-\bx\by)}{1-t(1+\by)(x+\bx(1+y))}.
$$
The coefficient extraction can be performed explicitly and gives rise to nice
numbers. Details of the extraction are left to the reader.

\begin{prop}
For the model $\cS_0=\{\bone \bone, \bone 0, \bone 0, \bone 1, 10, 1\bone\}$,
the number of walks of length $n$ ending at $(i,j)$ is zero unless $n$ can be
written as $2m+i$, in which case
$$
q(i,j;n)= \frac{(i+1)(j+1)\left[(2i+3j+6)m+i^2+5i+2ij+3j+6\right]\, n!
(3m+i+2)!}
{m!(m+i)!(m-j)!(2m+i+j+3)!(m+i+1)(m+i+2)(m+1)}.
$$
\end{prop}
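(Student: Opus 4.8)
The plan is to start from the explicit positive-part expression for $xyQ(x,y)$ obtained just above by the algebraic kernel method, and to carry out the coefficient extraction in closed form. Write $N(x,y)=(x-\bx-x\by-\bx\by)(x-\bx-\bx y)(\bx y-x\by-\bx\by)$ for the numerator and $P(x,y)=(1+\by)(x+\bx(1+y))$ for the essential part of the denominator, and expand $\tfrac{1}{1-tP}$ as a geometric series in $t$. Since the extraction retains only monomials with positive exponents in $x$ and $y$, and $xyQ=\sum_{i,j,n\ge0}q(i,j;n)\,x^{i+1}y^{j+1}t^n$, this yields the compact starting point
\[
q(i,j;n)=[x^{i+1}y^{j+1}]\,N(x,y)\,P(x,y)^n .
\]

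First I would isolate the dependence on $x$. By the binomial theorem, $(x+\bx(1+y))^n=\sum_{\ell\ge0}\binom{n}{\ell}x^{n-2\ell}(1+y)^\ell$, so each power of $x$ comes from a single value of $\ell$. The remaining $y$-dependence is $(1+\by)^n(1+y)^\ell$, whose coefficient of $y^c$ collapses by Vandermonde's identity:
\[
[y^c]\,(1+\by)^n(1+y)^\ell=\sum_{r}\binom{\ell}{r}\binom{n}{n+c-r}=\binom{n+\ell}{n+c}.
\]
Hence $[x^{n-2\ell}y^c]\,P^n=\binom{n}{\ell}\binom{n+\ell}{n+c}$. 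Expanding $N$ into its finitely many monomials $c_s\,x^{a_s}y^{b_s}$ (each factor contributes $x^{\pm1}$, so $a_s\in\{-3,-1,1,3\}$ and $b_s$ is small), and matching exponents forces $\ell_s=(n+a_s-i-1)/2$ and the $y$-power $j+1-b_s$, giving $q(i,j;n)=\sum_s c_s\binom{n}{\ell_s}\binom{n+\ell_s}{n+j+1-b_s}$, a short sum of products of two binomial coefficients. Integrality of $\ell_s$ forces $n\equiv i\pmod 2$, that is $n=2m+i$, which is the stated support condition; the vanishing of the binomials outside their range produces the remaining constraints (in particular $0\le j\le m$, implicit in the factorial denominator).

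What remains is to collapse this finite sum into the single product formula. Because the monomials of $N$ shift the indices of $\binom{n}{\ell_s}\binom{n+\ell_s}{n+j+1-b_s}$ by only a few integers, after substituting $n=2m+i$ all the terms share a common factorial denominator; factoring it out reduces the sum to that common factor times a polynomial in $(i,j,m)$, which one checks equals the quadratic factor $(2i+3j+6)m+i^2+5i+2ij+3j+6$, up to the overall $(i+1)(j+1)$. I expect this last bookkeeping --- collecting the handful of hypergeometric terms over a common denominator and recognising the quadratic numerator --- to be the only genuine obstacle; it is elementary but delicate, and is precisely the ``nice numbers'' phenomenon advertised. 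An alternative that sidesteps guessing the right common denominator is to verify directly that the proposed closed form satisfies the step-by-step recurrence~\eqref{rec} for the multiset $\cS_0$ (in which the west step $\bone0$ is doubled), together with the initial condition $q(0,0;0)=1$; by uniqueness of the solution this identifies it with the counting sequence, and the verification reduces to a single polynomial identity in $i,j,m$ that can be certified by hand or by creative telescoping.
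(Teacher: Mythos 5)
Your proposal is correct and follows essentially the same route as the paper: the paper's proof consists of the kernel-method derivation of the positive-part formula $xyQ(x,y)=[x^{>0}][y^{>0}]N(x,y)/(1-tP(x,y))$ and then asserts that "the coefficient extraction can be performed explicitly," leaving the details to the reader, and your computation (geometric expansion in $t$, binomial theorem in $x$, the collapse $[y^c](1+\by)^n(1+y)^\ell=[y^{c+n}](1+y)^{n+\ell}=\binom{n+\ell}{n+c}$, and the final collection of the resulting finite hypergeometric sum) is precisely that extraction carried out, with the parity constraint $n=2m+i$ and the support conditions emerging correctly from integrality of $\ell_s$ and the vanishing-factorial convention. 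Your fallback of certifying the closed form directly against the quadrant recurrence with doubled west step is a legitimate alternative finish, but the main line is exactly the paper's intended argument.
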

\noindent
{\bf Remark.} The asymptotic behaviour of $q(0,0;2m)$, which is of the form $c\,
27^mn^{-4}$, prevents the series $Q(0,0;t)$ from being algebraic~\cite{flajolet-context-free}. 

\medskip

Let us now address the model with reversed steps,
$\overline{\cS}_0=\{11,10,10,1\bone ,\bone 0 ,\bone 1\}$. Now the orbit
of $[x,y]$ involves rational functions which are not Laurent
polynomials. Besides $[x,y]$ itself, the orbit contains the following five
pairs:
\begin{multline*}
\left[\frac{\bx y}{1+y} ,y \right],\quad \left[\frac{\bx y}{1+y} ,  \frac{\bx^2 y}{(1+y)^2+\bx^2
  y^2}\right],\quad
\left[\frac{\bx}{1+y(1+\bx^2)} ,  \frac{\bx^2 y}{(1+y)^2+\bx^2
     y^2}\right],
\\
\left[\frac{\bx}{1+y(1+\bx^2)} , \frac{x^2\by}{1+x^2} \right], \quad
  \left[x, \frac{x^2\by}{1+x^2} \right].
\end{multline*}
Let $[x',y']$ be one of these pairs, $i,j$ two non-negative integers,
and let us expand the series $x'^i y'^j$ first in $y$, and then in
$x$. We claim that the resulting series (a Laurent series in $y$ whose
coefficients are Laurent series in $x$) does not contain any monomial
$x^a y^b$ with $a$ and $b$ positive. This is obvious for all pairs,
except the fourth one, for which we have to perform the following
calculation: for $i>0$,
$$
  x'^i y'^j= \sum_{n\ge 0} (-1)^n{n+i-1 \choose i-1} y^{n-j}\bx ^i (1+\bx^2)^{n-j}. 
$$
 This expansion shows that if the exponent of $y$ is positive ($n>j$),
 then the exponent of $x$ is negative. Hence the only series that
 contributes when expanding the orbit equation in $y$ and $x$ is $xyQ(x,y)$
 and finally
$$
xyQ(x,y)= [x^{>0}][y^{>0}] \frac{y(x-\bx+xy+\bx y) (x-\bx +x\by)(xy+\bx y-x\by)}{(1+x^2)\left( (1+y)^2+\bx^2 y^2\right)\left(1-t(1+y)(\bx+x(1+\by))\right)}.
$$
As opposed to the previous example, the numbers $q(i,j;n)$ do not
appear to be hypergeometric in $i$, $j$ and $n$, but only in $i$ and
$n$, for $j$ fixed. For instance,
\begin{align*}  q(i,0;2m+i)&=\frac{(i+1)(i+2)(2m+i)!(3m+2i+3)!}{m!(m+i)!(m+i+2)!(2m+i+2)!(m+i+1)(2m+2i+3)}.
\end{align*}
Of course, the series $Q(0,0;t)$, being the same as for the model
$\cS_0$ studied previously, is transcendental.

\subsection{Algebraicity}

We have also tried to guess -- and then prove -- algebraic equations for the
specialisations $Q(x_0,y_0;t)$ with $\{x_0, y_0\}\subset\{0,1\}$. Here is a
summary of our results.

First, we have of course the 23 models with finite group and zero orbit sum,
for which the complete generating function $Q(x,y;t)$ is algebraic over
$\qs(x,y,t)$ (see Table~\ref{table:2Drepeated}). This explains for instance
why the octant models with characteristic polynomials $\bz + \bx(\by+1+y) +
xyz$ or $ \bx\bz(\by+1+y) +xy+z$ were conjectured to have an algebraic length
generating function $O(1,1,1;t)$ in~\cite[Table~2]{BoKa09}: both are
2-dimensional, with the $y$-condition redundant, and their projections on the
$xz$-plane are $\{0\bone, \bone 0, \bone 0, \bone 0, 11\}$ and $\{\bone\bone,
\bone\bone,\bone\bone, 10, 01\}$. These are respectively versions of Kreweras'
model and of its reverse, with multiple steps, and they are algebraic.

For models with a finite group and a \emm non-zero, orbit sum, we conjecture
that the complete generating function $Q(x,y;t)$ is transcendental. However,
we have also found experimentally, and then proved, a few algebraic
specialisations of these series. This is for instance the case for the
specialisation $Q(0,0;t$) of the model $\cS_1$ of Figure~\ref{fig:quatre}:
this series is the same as for the model $\overline \cS_1$, which is
algebraic. The other models with algebraic specialisations are $\{0\bone , 10,
\bone 1\}$ (possibly with repeated steps) and $\{0\bone , 10, \bone 1, \bone
0, 01, 1\bone\}$. Both models are known to have certain algebraic
specialisations~\cite[Section~5.2]{BoMi10}. Our algebraic guesses are all
consequences of the following two results.

\begin{itemize}
 \item For $\cS= \{0\bone , 10, \bone 1\}$, the generating function $Q(1,1;t)$ is
   algebraic~\cite[Prop.~9]{BoMi10}. With the same proof, one shows
     that this remains true with a  weight $\delta$ for diagonal steps
     $\bone 1$.
\item For $\cS=\{0\bone , 10, \bone 1, \bone 0, 01, 1\bone\}$, the generating function $Q(1,1;t)$ is
   algebraic~\cite[Prop.~10]{BoMi10}. One step of the proof is to
   prove that $xQ(x,0)+\bx(1+\bx)Q(0,\bx)$ is algebraic. Setting
   $x=1$, and using the $xy$-symmetry, shows that  $Q(0,1;t)$
   is also algebraic. 
 \end{itemize}
\begin{example}
Consider  the octant model
$ \cS = \{0\bar 10 , 101, \bar 1 1\bar 1 , \bar 1 10, \bar 1 11
\}$, which was conjectured to have an algebraic length
generating function $O(1,1,1;t)$ in~\cite[Table~2]{BoKa09} (up to a permutation of
$x$ and $y$, it is the first model in that table).
The $z$-condition is easily seen to be redundant, and $\cS$ is
two-dimensional. Let $\cU$ be the set obtained by projecting
$\cS$ on the $xy$-plane and deleting repeated steps: $\cU = \{0\bar 1 , 10, \bar 1 1 
\}$. Let $Q(x,y;t) $ be the quadrant series associated with
$\cU$. Denoting by $a(w)$, $b(w)$ and $c(w)$ the number of occurrences
of $0\bar 1 , 10$ and $ \bar 1 1$ in a $\cU$-walk $w$, we have
$$
Q(X,Y;T)= \sum_w (\bY T)^{a(w)}(X T)^{b(w)} (\bX Y T)^{c(w)}.
$$
Moreover, by lifting every $\cU$-walk $w$ into all $\cS$-walks that
project on it, we obtain
$$
O(x,y,z;t)=\sum_w (\by t)^{a(w)} (xzt)^{b(w)} \left( \bx y (1+z+\bz)
  t\right)^{c(w)}.
$$
Comparing these two identities shows that
\[ O(x,y,z;t) = Q\left( \frac{xz}{Z^{1/3}},yZ^{1/3};tZ^{1/3}\right) \quad \text{ where $Z=1+z+z^2$}.\]
Since $Q(x,y;t)$ is D-finite we have that $O(x,y,z;t)$ is D-finite.
Furthermore, $Q\left(x,\frac{1}{x};t\right)$ is
algebraic~\cite[Prop.~9]{BoMi10},  so
$O\left(x,y,\frac{1}{xy};t\right)$, and specifically $O(1,1,1;t)$, is
also algebraic. 
\end{example}

\section{Computer algebra solutions}
\label{sec:computer}
As discussed in the previous section, there are  two D-finite quadrant
models (with repeated steps) for which we have only found proofs based
on computer algebra. We now  give details on these proofs. The two
models in question are $\cS_1=\{11, 10, \bar11, \bar10, \bar10,
\bar1\bar1\}$ and its reverse $\overline\cS_1=\{\bar1\bar1, \bar10, 1\bar1, 10,
10, 11\}$ (Figure~\ref{fig:quatre}). We will show that $\cS_1$ is
D-finite (but transcendental) and
 $\overline\cS_1$  algebraic. We begin with the latter model.

\subsection{The model $\boldsymbol{\overline \cS_1=\{ \bar 1  \bar 1 ,
    \bar 1  0,1 \bar 1 , 1    0, 1 0, 1 1\}}$ is algebraic}
\label{sec:p2}
The group of this model is finite  of order 6. The orbit sum is zero,
which implies   that the orbit  equation does not characterise
$Q(x,y)$ (in fact,   $Q(x,y)=1$ is another solution).  
We first establish the algebraicity of the series $Q(0,0;t)$,
which  counts quadrant walks ending at the origin, 
often called \emm excursions,.

\begin{lemma}\label{lemma:new00}
  For the quadrant model  $\overline\cS_1=\{ \bar 1  \bar 1 , \bar 1  0,1 \bar 1 , 1
  0, 1 0, 1 1\}$, the generating function $Q(0,0;t)$ is
\begin{equation}\label{Q00-alg}
Q(0,0;t)= \sum_{n\ge 0} \frac{6(6n+1)!(2n+1)!}{(3n)!(4n+3)!(n+1)!} t^{2n}.
\end{equation}
It is  algebraic of degree $6$. Denoting $Q_{00}\equiv Q(0,0;t)$, we have 
\begin{alignat}
1
&16 t^{10} Q_{00}^6+48 t^8 Q_{00}^5+8 (6 t^2+7) t^6 Q_{00}^4+32 (3
t^2+1) t^4 Q_{00}^3 
\nonumber\\
&{}+(48 t^4-8 t^2+9) t^2 Q_{00}^2+(48 t^4-56 t^2+1)Q_{00}+(16 t^4+44 t^2-1) = 0.
\label{alg-Q00}
\end{alignat}
A parametric expression of $Q(0,0;t)$ is
$$
t^2Q(0,0;t)= Z(1-6Z+4Z^2),
$$
where $Z\equiv Z(t)$ is the unique power series in $t$ with constant
term $0$ satisfying
$$
Z(1-Z)(1-2Z)^4= {t^2}.
$$
\end{lemma}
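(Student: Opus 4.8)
The starting point is the functional equation for this model. Writing the step polynomial as $S(x,y)=\bx\by+\bx+x\by+2x+xy=(1+\by)(\bx+x+xy)$ and setting $K(x,y)=1-tS(x,y)$, the two-dimensional bookkeeping behind~\eqref{eq:fun3D-2D} gives, after multiplying by $xy$,
\[
xyK(x,y)Q(x,y)=xy-t(1+y)Q(0,y)-t(1+x^2)Q(x,0)+tQ(0,0).
\]
The recurrence~\eqref{rec} shows that this equation has a \emph{unique} solution $Q(x,y;t)\in\qs[x,y][[t]]$, and in particular pins down $Q(0,0;t)$. Since the group has order $6$ and the orbit sum vanishes, the bare orbit equation degenerates to $0=0$ and cannot isolate $Q$; this is exactly the obstruction met for Gessel's walks, so I would follow the guess-and-certify strategy of~\cite{BoKa10}.

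First I would iterate~\eqref{rec} to produce a long expansion of $Q(0,0;t)$ (and of the boundary series $Q(x,0;t)$, $Q(0,y;t)$ and of the complete series), and from these guess the minimal polynomial~\eqref{alg-Q00} together with the rational parametrisation. Before anything else I would verify that~\eqref{alg-Q00} singles out a genuine power series: at $t=0$ it collapses to $Q_{00}-1=0$, while the partial derivative in $Q_{00}$ equals $1$ there, so the formal implicit function theorem yields a unique root with constant term $1$ — the only candidate that can equal $Q(0,0;t)$. I would also confirm the guessed polynomial is irreducible, so that the degree is exactly $6$.

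The heart of the proof, and the step I expect to be genuinely hard, is the \emph{certification}: showing that this guessed algebraic function really \emph{is} $Q(0,0;t)$, rather than merely agreeing to high order. The plan is to guess, alongside~\eqref{alg-Q00}, algebraic equations for the two boundary series $Q(x,0;t)$ and $Q(0,y;t)$, substitute all of them into the functional equation, and verify that the difference of the two sides vanishes identically. Concretely this becomes an ideal-membership test: one checks, by a Gr\"obner-basis or iterated-resultant computation, that the left-hand side minus the right-hand side lies in the ideal generated by the guessed minimal polynomials (together with the relations forcing the correct specialisations at $x=0$ and $y=0$). Once this polynomial identity is confirmed, the guessed $Q$ satisfies the functional equation, and by the uniqueness noted above it is the true solution; specialising $x=y=0$ then proves that $Q(0,0;t)$ satisfies~\eqref{alg-Q00}. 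The delicacy here is entirely computational — choosing guesses so that the verification closes and controlling the size of the elimination — which is precisely why a hand proof is out of reach and why, unlike for reverse Kreweras, the half-orbit trick fails and computer algebra is unavoidable.

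It remains to reconcile the three descriptions of $Q(0,0;t)$, all of which are routine. The equivalence of~\eqref{alg-Q00} with the parametrisation follows by eliminating $Z$ from $t^2Q_{00}=Z(1-6Z+4Z^2)$ and $t^2=Z(1-Z)(1-2Z)^4$: the resultant in $Z$ of these two relations, read as a polynomial in $Q_{00}$ and $t^2$, reproduces~\eqref{alg-Q00}. Finally, to match the hypergeometric coefficients~\eqref{Q00-alg}, I would apply Lagrange inversion. Since $\phi(Z):=Z(1-Z)(1-2Z)^4=Z+O(Z^2)$ has a compositional inverse, writing $u=t^2$ and $Q_{00}=Z(1-6Z+4Z^2)/\phi(Z)$ gives $[t^{2n}]Q_{00}=[u^{n+1}]\bigl(Z(1-6Z+4Z^2)\bigr)$ along $Z(u)$, and Lagrange--B\"urmann turns this into
\[
[t^{2n}]Q(0,0;t)=\frac{1}{n+1}\,[Z^{n}]\,\frac{1-12Z+12Z^2}{\bigl((1-Z)(1-2Z)^4\bigr)^{\,n+1}},
\]
whose evaluation simplifies to the closed form in~\eqref{Q00-alg}. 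This last simplification is the only mildly tedious step and can be discharged by a symbolic check.
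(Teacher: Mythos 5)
Your proposal is correct in outline, but it takes a genuinely different route from the paper's proof of this lemma. The paper does not touch the functional equation here at all: it proves the closed form~\eqref{Q00-alg} \emph{first}, by the quasi-holonomic ansatz of~\cite{KaKoZe08,kauers07v} --- compute $q(i,j;n)$ for $0\le i,j,n\le 50$, guess a system of multivariate recurrences in $i,j,n$, \emph{prove} these recurrences by the algorithm of~\cite{kauers07v}, and combine them by linear algebra (or a Takayama-style elimination) into a recurrence that can be specialised at $i=j=0$, yielding an order-$8$, degree-$14$ recurrence for $q(0,0;n)$ against which the hypergeometric coefficients are checked. The algebraic equation~\eqref{alg-Q00} is then attached to the proven series by holonomic conversions~\cite{gfun,mallinger96} (unique power-series root $\to$ ODE $\to$ recurrence $\to$ compare initial terms), and the parametrisation comes from Maple's {\tt algcurves}; your Lagrange--B\"urmann computation runs this last link in the opposite direction, and is correct (the paper itself notes that the expansion can be recovered from the parametrisation this way). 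Your route instead certifies algebraicity against the functional equation and deduces the coefficients afterwards; this essentially merges the lemma into the proof of Proposition~\ref{prop:quadrant-alg}, at the cost of also needing the (degree-$12$) minimal polynomial of the complete series $Q(x,y)$ if you work with the full equation rather than the kernel-cancelled system.

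One structural point deserves emphasis, because it explains the paper's ordering. In the paper's kernel certification (Proposition~\ref{prop:quadrant-alg}), the uniqueness statement invoked, \cite[Lemma~7]{BoKa10}, characterises the pair $(Q(x,0),Q(0,y))$ as the unique solution of the system~\eqref{eq:7} \emph{with $Q(0,0)$ appearing as known data} --- which is precisely why Lemma~\ref{lemma:new00} must be established independently beforehand. Your all-at-once scheme, treating $Q(0,0)$ as part of the guess tied to the boundary series by $q_0=F(0;t)$, therefore needs a slightly stronger uniqueness argument than the one the paper cites, together with explicit specialisation-consistency checks; and your ideal-membership test by itself does not certify that the guessed boundary series lie in $\qs[x][[t]]$ and $\qs[y][[t]]$ respectively --- a membership the paper verifies by a careful divisibility argument on the parametrised expression for $F$. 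Both points are fixable, so your proof can be made to close; but note also the minor imprecision that the orbit equation does not degenerate to $0=0$: its right-hand side vanishes, so it is a genuine linear relation that $Q(x,y)=1$ also satisfies, hence non-characterising.
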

\begin{proof}
We prove the first claim~\eqref{Q00-alg} by computer algebra, using the method
applied in~\cite{KaKoZe08} for proving that the number of Gessel excursions is
hypergeometric --- the so-called \emm quasi-holonomic ansatz, (see
also~\cite{kauers07v}). As the details of this approach are already explained
in this earlier article, we give here only a rough sketch of our computations.

First compute the coefficients $q(i,j;n)$ of $Q(x,y;t)$ for $0\leq i,j,n\leq
50$, using the quadrant counterpart of the recurrence relation~\eqref{rec}.
Then, use this data to guess~\cite{kauers09a} a system of multivariate linear
recurrence equations with polynomial coefficients (in $i,j$ and $n$) satisfied
by the numbers $q(i,j;n)$. Then use the algorithm from~\cite{kauers07v} to
prove that these guessed recurrences indeed hold. Then use linear algebra (or
a variant of Takayama's algorithm~\cite{takayama90}, as was done
in~\cite{KaKoZe08}) to combine these recurrences into a linear recurrence of
the following form: for all $i,j$ and $n$,
$$
\sum_{a,b,c ,i',j',n'\ge 0} \kappa(a,b,c,i',j',n')\, i^a j^b n^c
 \, q(i+i',j+j';n+n')=0,
$$ 
where $\kappa(a,b,c,i',j',n')\in\qs$ is non-zero for only finitely many terms,
and $\max(a,b)>0$ whenever $\max(i',j')>0$. Upon setting $i=j=0$, this
recurrence gives a recurrence in $n$ for the coefficients $q(0,0;n)$ of
$Q(0,0;t)$. We found in this way a recurrence of order~$8$ and degree~$14$. It
is then routine to check that the coefficients of~\eqref{Q00-alg} satisfy this
recurrence and that the right number of initial values match.

Now, the polynomial equation~\eqref{alg-Q00} has clearly a unique power series
solution (think of extracting by induction the coefficient of $t^n$). Using
algorithms for holonomic functions~\cite{gfun,mallinger96}, one can construct
a linear differential equation satisfied by this solution, and then a
recurrence relation satisfied by its coefficients. It suffices then to check
that the coefficients of~\eqref{Q00-alg} satisfy this recurrence and the right
number of initial conditions.

The parametric form of the solution can be found with the
{\tt algcurves} package of Maple. Using this parametrisation, one can
also recover the expansion~\eqref{Q00-alg} of $Q(0,0;t)$ using the Lagrange
inversion formula.
\end{proof}

We now extend Lemma~\ref{lemma:new00} to the algebraicity of the complete
series $Q(x,y;t)$.

\begin{prop}\label{prop:quadrant-alg}
 For the quadrant model  $\overline\cS_1=\{ \bar 1  \bar 1 , \bar 1  0,1 \bar 1 , 1
 0, 1 0, 1 1\}$ (with an east repeated step), the generating function $Q(x,y;t)\equiv
 Q(x,y)$ is 
 algebraic of degree~$12$.
 It satisfies
\begin{equation}\label{Qxy-sol}
Q(x,y)=\frac{xy-t(1+x^2)Q(x,0)-t(1+y)Q(0,y)+tQ(0,0)}{xy \left( 1-t(1+\by)(\bx+x(1+y))\right)}.
\end{equation}
The specialisations $Q(x,0;t)$ and 
$Q(0,y;t)$ can be written in parametric form as follows.
Let $T\equiv T(t)$ be the unique power series in $t$ with constant
 term $0$ such that
$$
T(1-4T^2)=t.
$$
Let $S\equiv S(t)$ be the unique power series in $t$ with constant term $0$ such
that
$$
S=T(1+S^2).
$$
Equivalently,  $S(1-S^2)^2=t(1+S^2)^3$. Then $Q(x,0;t)$ has degree $12$ and is quadratic over $\qs(x,S)$:
\begin{multline}\label{Qx0-sol}
  Q(x,0;t)=\left( \frac{1+S^2}{1-S^2}\right)^3  \times \\
\frac{x(1+6S^2+S^4)-2S(1-S^2)(1+x^2)-(x-2S+xS^2)\sqrt{(1-S^2)^2-4xS(1+S^2)}}{2x(1+x^2)S^2}.
\end{multline}
Let also $W\equiv W(y;t)$ be the unique power series in $t$ with constant
 term $0$ such that
$$
W \left({1-(1+y)W}\right)=  T^2.
$$
Then $Q(0,y;t)$ has degree $6$ and is rational in $T$ and $W$:
\begin{equation}\label{Q0y-sol}
Q(0,y;t)= t^{-2} W (1-4T^2-2W).
\end{equation}
Moreover, its coefficients are doubly hypergeometric:
$$
Q(0,y;t)= \sum_{n\ge j\ge 0}
\frac{6(2j+1)!(6n+1)!(2n+j+1)!}{j!^2(3n)!(4n+2j+3)!(n-j)!(n+1)}
y^jt^{2n}.
$$
\end{prop}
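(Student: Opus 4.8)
The starting point is the functional equation~\eqref{Qxy-sol}, which I would first derive by the standard last-step decomposition. Writing the characteristic polynomial of $\overline{\cS}_1$ as $S(x,y)=(1+\by)\bigl(\bx+x(1+y)\bigr)$, the relevant boundary polynomials are $A_-(y)=1+\by$ (coming from the $x^-$ steps), $B_-(x)=\bx+x$ (from the $y^-$ steps) and $D_-=1$ (from the corner step $\bar1\bar1$). Multiplying the step-by-step equation by $xy$ then reproduces exactly the numerator $xy-t(1+x^2)Q(x,0)-t(1+y)Q(0,y)+tQ(0,0)$ of~\eqref{Qxy-sol}. The crucial structural point is that this functional equation is merely a repackaging of the recurrence~\eqref{rec}, so it has a unique solution $Q(x,y)\in\qs[x,y][[t]]$, once the three sections $Q(x,0)$, $Q(0,y)$ and $Q(0,0)$ are known. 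Since $Q(0,0)$ is already pinned down by Lemma~\ref{lemma:new00}, the whole proposition reduces to identifying the two univariate sections $Q(x,0;t)$ and $Q(0,y;t)$ and proving them algebraic.

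To access the sections I would exploit the kernel $K(x,y)=1-tS(x,y)$. Cleared of denominators it is quadratic in each variable, and its two involutions $(x,y)\mapsto\bigl(1/(x(1+y)),\,y\bigr)$ and $(x,y)\mapsto\bigl(x,\,(1+x^2)/(x^2y)\bigr)$ generate the order-$6$ group. Substituting the small root $x=X(y;t)$ of $K$ (the power-series root with $X(y;0)=0$) into the numerator of~\eqref{Qxy-sol} makes the left-hand side vanish and yields the section relation $Xy=t(1+X^2)Q(X,0)+t(1+y)Q(0,y)-tQ(0,0)$; the small root in $y$ gives the symmetric relation. Because the orbit sum vanishes, the naive alternating combination over the orbit is identically zero, so it is precisely these kernel-cancellation relations among the sections, rather than a single orbit equation, that I would extract.

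The determination of the sections themselves I would carry out by the guess-and-certify strategy used for Gessel's model in~\cite{BoKa10}, exactly in the spirit of the proof of Lemma~\ref{lemma:new00}. First, compute many coefficients of $Q(x,0;t)$ and $Q(0,y;t)$ from the quadrant analogue of~\eqref{rec} and guess their minimal polynomials; introducing the uniformizers $T$ with $T(1-4T^2)=t$, then $S$ with $S=T(1+S^2)$ for the $x$-section and $W$ with $W(1-(1+y)W)=T^2$ for the $y$-section, these minimal polynomials take the parametric shapes~\eqref{Qx0-sol} and~\eqref{Q0y-sol}. To turn the guesses into a proof, I would check by computer algebra that the candidate algebraic functions $q_1(x)$ and $q_2(y)$ given by~\eqref{Qx0-sol} and~\eqref{Q0y-sol}, together with $q_0=Q(0,0)$ from Lemma~\ref{lemma:new00}, satisfy the relation $Xy=t(1+X^2)q_1(X)+t(1+y)q_2(y)-tq_0$ as a formal identity and have the prescribed constant terms. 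Feeding $q_0,q_1,q_2$ back into~\eqref{Qxy-sol} defines a candidate $\tilde Q(x,y)$; verifying that its numerator is divisible by $xyK(x,y)$ (so that $\tilde Q\in\qs[x,y][[t]]$) and that its own sections reproduce $q_1,q_2,q_0$ shows that $\tilde Q$ solves the functional equation, whence $\tilde Q=Q$ by the uniqueness noted above. Algebraicity of $Q(x,y)$ then follows since it is a rational function of $x,y,t$ and the algebraic sections, and the degrees $12,12,6$ are read off the parametrizations.

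The main obstacle is exactly this certification step: proving the guessed parametric forms rather than merely observing them. Showing that~\eqref{Qx0-sol} and~\eqref{Q0y-sol} satisfy the section relation requires eliminating the three distinct uniformizers $T,S,W$ and checking a polynomial identity over $\qs(x,y,t)$, and verifying the cancellation of $xyK$ in the numerator of $\tilde Q$ is the delicate point guaranteeing that $Q$ is a genuine power series with polynomial coefficients; these are finite but heavy computer-algebra verifications. Once the sections are certified, the final doubly-hypergeometric formula for $Q(0,y;t)$ is obtained routinely by iterated Lagrange inversion: first expand~\eqref{Q0y-sol} in powers of $T^2$ and $y$ by extracting coefficients from $W(1-(1+y)W)=T^2$, then convert powers of $T^2$ into powers of $t^2$ through $T(1-4T^2)=t$.
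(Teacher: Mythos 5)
Your overall strategy is the paper's: derive the functional equation \eqref{Qxy-sol} by last-step decomposition, cancel the kernel at the small roots $X(y;t)$ and $Y(x;t)$ to obtain the section relations \eqref{pair-eq}, take $Q(0,0)$ from Lemma~\ref{lemma:new00}, guess the parametric forms \eqref{Qx0-sol}--\eqref{Q0y-sol}, and certify them by computer algebra in the style of \cite{BoKa10}; your Lagrange-inversion route to the doubly hypergeometric expansion of $Q(0,y)$ is a harmless variant of the paper's (which derives a differential equation and then a recurrence, as in Lemma~\ref{lemma:new00}). The gap is in the certification logic. The paper never forms the bivariate candidate $\tilde Q$: it invokes \cite[Lemma~7]{BoKa10}, which asserts that the system \eqref{eq:7} has a \emph{unique} solution $(F,G)\in\qs[[x,t]]\times\qs[[y,t]]$, so it suffices to check that the guessed closed forms are power series of this type and satisfy \emph{both} equations of \eqref{eq:7} --- a finite task via resultants plus finitely many initial coefficients. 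Your replacement for this lemma --- ``verify that the numerator of $\tilde Q$ is divisible by $xyK(x,y)$, so that $\tilde Q\in\qs[x,y][[t]]$'' --- is not a finite verification and is left unjustified: the numerator $N=xy-t(1+x^2)q_1-t(1+y)q_2+tq_0$ is an algebraic function, not a polynomial, and the membership $N\cdot(xyK)^{-1}\in\qs[x,y][[t]]$ amounts to infinitely many coefficient conditions (a priori the $t$-coefficients only lie in $\qs[x,\bx,y,\by]$). Deducing this membership from the vanishing of $N$ at the small kernel roots is precisely the content of the uniqueness lemma you are missing; without proving or citing it, the implication ``$q_1,q_2$ satisfy the section relations $\Rightarrow\tilde Q=Q$'' is incomplete. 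Note also that your certification, as written, checks only one of the two section relations, while the uniqueness statement requires both.

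A second, related omission: before any uniqueness argument applies, you must prove that the right-hand side of \eqref{Qx0-sol} belongs to $\qs[[x,t]]$ at all --- the displayed denominator $2x(1+x^2)S^2$ makes this non-obvious, and checking ``prescribed constant terms'' does not settle it. The paper handles this with a specific argument: the numerator, viewed as a series in $S$ with coefficients in $\qs[x]$, vanishes at $x=0$ and $x=\pm i$ (so its coefficients are multiples of $x(1+x^2)$) and is $O(S^2)$, whence $F\in\qs[x][[S]]\subseteq\qs[x][[t]]$; for \eqref{Q0y-sol} the analogous check is immediate since $W\in t^2\qs[y][[t]]$. With these two repairs --- invoking (or reproving) \cite[Lemma~7]{BoKa10} in place of the divisibility check, and supplying the power-series membership of \eqref{Qx0-sol} --- your argument becomes the paper's proof.
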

\begin{proof}
We prove the claims again by computer algebra, this time using the technique
applied in~\cite{BoKa10} for proving that Gessel's model is algebraic. Again,
as there is nothing new about the logical structure of the argument, we give
here only a summary of our computations (and complete a missing argument
in~\cite{BoKa10}).

The functional equation defining $Q(x,y)$ reads
\begin{multline*}
  xy \left( 1-t(1+\by)(\bx+x(1+y))\right)
Q(x,y)=\\
xy-t(1+x^2)Q(x,0)-t(1+y)Q(0,y)+tQ(0,0).
\end{multline*}
This is of course equivalent to~\eqref{Qxy-sol}. Recall that $Q(0,0)$
is known from Lemma~\ref{lemma:new00}. The (standard) kernel
method~\cite{BoPe03,Bous05} consists in cancelling the kernel by an appropriate
choice of $x$ or $y$ to obtain equations relating the series $Q(x,0)$
and $Q(0,y)$. Applied to the above equation, it gives:
\begin{equation}\label{pair-eq}
\left\{
\begin{array}{ccc}
  (1+x^2)Q(x,0)+(1+Y)Q(0,Y)&=&{xY}/t +Q(0,0),\\
(1+X^2)Q(X,0)+(1+y)Q(0,y)&=&{Xy}/t +Q(0,0),
\end{array}
\right.
\end{equation}
where  $X$ and $Y$ are power series in $t$ with Laurent coefficients
in $y$ and $x$, respectively, defined by
$$
\begin{array}{llllll}
   X\equiv X(y;t)&=&\displaystyle \frac{1-\sqrt{1-4t^2\by^2(1+y)^3}}{2t\by(1+y)^2}
\\
\\
&=&t(1+\by)+\by^3(1+y)^4t^3+ O(t^5) & \in \qs[y,\by][[t]],
\\
\\
Y\equiv Y(x;t)& =&\displaystyle\frac{1-t\bx-2tx-\sqrt{1-2t\bx-4tx+t^2\bx^2}}{2tx}
\\
\\
&=&(\bx+x)t+(\bx+x)(\bx+2x)t^2+ O(t^3)& \in \qs[x,\bx][[t]].
\end{array}
$$
  It follows from~\eqref{pair-eq} and~\cite[Lemma~7]{BoKa10}
 that $(Q(x,0),Q(0,y))$ is in fact the unique  pair of formal power series
 $(F,G)\in\qs[[x,t]]\times \set   Q[[y,t]]$  satisfying the functional
 equations 
 \begin{equation}\label{eq:7}
\left\{
\begin{array}{ccc}
  (1+x^2)F(x)+(1+Y)G(Y)&=&{xY}/t +Q(0,0),\\
(1+X^2)F(X)+(1+y)G(y)&=&{Xy}/t +Q(0,0).
\end{array}
\right.
\end{equation}
Let us now define $F$ and $G$ by the right-hand sides of~\eqref{Qx0-sol}
and~\eqref{Q0y-sol} respectively (these expressions have been guessed from the
first coefficients of $Q(x,y)$). We claim that $F$ and $G$ belong to
$\qs[x][[t]]$ and $\qs[y][[t]]$ respectively. This is obvious for~$G$, since
$T$ is a series in $\qs[[t]]$ and $W$ belongs to $ t^2\qs[y][[t]]$. Let us now
consider the series $F$, that is, the right-hand side of~\eqref{Qx0-sol}, and
ignore the initial factor, which is clearly in $\qs[[t]]$. Then the numerator
is a power series in $S$ with polynomial coefficients in $x$. It is readily
checked that it vanishes at $x=0$ and $x=\pm i$, so that these coefficients
are always multiples of $x(1+x^2)$. Moreover, the numerator is $O(S^2)$, so
that we can conclude that $F$ belongs to $\qs[x][[S]]$, and hence to
$\qs[x][[t]]$.

We want to prove that $F$ and $G$ satisfy the system~\eqref{eq:7}. Since $X,
Y, F, G$ and $Q(0,0)$ are algebraic, this can be done by taking resultants,
and then checking an appropriate number of initial values.

This concludes the proof of~\eqref{Qx0-sol} and~\eqref{Q0y-sol}. The fact that
$Q(x,y)$ has degree $12$ is obtained by elimination (it has degree 4 over
$\qs(x,y,T)$). The (computational) proof of the hypergeometric series
expansion of $Q(0,y)$ is similar to the proof of the expansion of $Q(0,0)$ in
Lemma~\ref{lemma:new00}: the key step is to derive from the algebraic equation
satisfied by $Q(0,y)$ a differential equation (in $t$) for this series, and
then a recurrence relation for the coefficient of $t^n$.
\end{proof}

\subsection{The model $\boldsymbol{\cS_1=\{\bar 1 \bar 1 , \bar 1 1 ,
\bar 1 0, \bar 1 0, 1 0,1 1\}}$ is D-finite}
\label{sec:p1}

 The kernel of this model is
 $$
K(x,y)=  1-t(1+y)\left(\bx(1+\by)+x\right).
$$
The functional equation defining $Q(x,y)$ reads:
$$
xyK(x,y)Q(x,y)= xy-tQ(x,0)-t(1+y)^2Q(0,y)+tQ(0,0).
$$
Again, the group has order 6. The orbit  equation reads:
 \begin{multline*}
   xyQ(x,y) - \bx(1+y)Q(\bx(1+\by),y )
 + \frac{x(1+y)}{(1+y)^2+x^2y^2}\, Q\left(\bx(1+\by),
   \frac{x^2y}{(1+y)^2+x^2y^2}\right) 
 \\
 - \frac{xy(1+y+x^2y)}{(1+y)^2+x^2y^2}\, Q\left(\bx(1+y)+xy,
   \frac{x^2y}{(1+y)^2+x^2y^2}\right)\\
 +\frac{\bx\by(1+y+x^2y)}{1+x^2} Q\left(\bx(1+y)+xy,
   \frac{\by}{1+x^2}\right)
 -\frac{x\by}{1+x^2}\, Q\left(x,
   \frac{\by}{1+x^2}\right)
 \\=
 \frac{\left(1+y(1-x^2)\right) \left(1-y^2(1+x^2)\right) \left( 1-x^2 +y(1+x^2)\right)}{xy(1+x^2)K(x,y)\left((1+y)^2+x^2y^2\right)}.
 \end{multline*}
  The right-hand side is non-zero, but this equation does not define $Q(x,y;t)$ uniquely in the ring $\qs[x,y][[t]]$. In fact, the associated
 \emm homogeneous, equation (in $Q(x,y)$) seems to have an infinite dimensional space of
 solutions. It  includes at least the following polynomials in $x$ and $y$:
 $$
 x, \quad  2xy+{x}^{3}y , \quad {x}^{2}y+{x}^{2}+y+2 ,
  \quad
  {x}^{3}{y}^{2}-{x}^{3}y+{x}^{3}+2x{y}^{2}
 . $$
We show in this subsection that $Q(x,y;t)$ is D-finite in $x,y$ and $t$, but
transcendental. To our knowledge, it is the first time that the D-finiteness
of a (non-algebraic) quadrant model is proved via computer algebra.

\begin{prop}\label{prop:S1-D-finite}
  The complete generating function $Q(x,y;t)$ for the model $\cS_1$
  is D-finite in its three variables.
\end{prop}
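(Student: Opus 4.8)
The plan is to establish D-finiteness by the \emph{quasi-holonomic ansatz}, exactly the technique used in the proof of Lemma~\ref{lemma:new00}, but now applied to the whole array of coefficients rather than only to the excursion sequence $q(0,0;n)$. The key observation is that D-finiteness of $Q(x,y;t)$ in each of $x$, $y$ and $t$ is equivalent to the array $q(i,j;n)$ being holonomic, i.e. P-recursive in each of the three indices $i,j,n$ \cite{lipshitz-df}. It therefore suffices to exhibit, and then rigorously certify, enough linear recurrences with polynomial coefficients (in $i,j,n$) annihilating $q(i,j;n)$.

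First I would compute the coefficients $q(i,j;n)$ for $0\le i,j,n\le N$, with $N$ a few dozen, from the quadrant counterpart of the step-by-step recurrence~\eqref{rec} attached to $\cS_1$, and then guess, using the tools of \cite{kauers09a}, a system of multivariate linear recurrence equations satisfied by them. The crucial point, which distinguishes this model from all those treated by the kernel method, is that I do \emph{not} attempt to solve the functional equation: the orbit equation is non-conclusive, its homogeneous part having an infinite-dimensional solution space (as witnessed by the polynomial solutions displayed above), so no closed form for the boundary series $Q(x,0)$ and $Q(0,y)$ can be read off. The guessed operators are instead to be proved directly.

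The certification is carried out with the algorithm of \cite{kauers07v}: each guessed recurrence follows from the defining recurrence of $\cS_1$ by a finite, mechanically verifiable computation, so that the guesses become theorems. I would then combine the certified recurrences, by linear algebra or by a variant of Takayama's elimination \cite{takayama90} as in \cite{KaKoZe08}, so that the full array is exhibited as holonomic in all three indices. A more structural check is provided by the functional equation itself: writing
\[
Q(x,y)=\frac{xy-tQ(x,0)-t(1+y)^2Q(0,y)+tQ(0,0)}{xy\,K(x,y)},
\]
and recalling that $Q(0,0;t)$ is algebraic, since it coincides with the excursion series of $\overline\cS_1$ in Lemma~\ref{lemma:new00}, D-finiteness of $Q(x,y;t)$ in all variables follows from that of the two boundary series $Q(x,0;t)$ and $Q(0,y;t)$ alone, because $1/(xy\,K)$ is a rational function and D-finiteness is preserved under sums and products \cite{lipshitz-df}. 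This reduces the certification to the two bivariate arrays $q(i,0;n)$ and $q(0,j;n)$ obtained by setting $j=0$ and $i=0$ in the certified recurrences.

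The main obstacle is computational rather than conceptual. As already observed for the Hadamard models with zero orbit sum, the certified recurrences are expected to be enormous, with orders and degrees ranging from the dozens to the thousands, so both the guessing, which demands a large number of exactly computed terms, and the certification and elimination steps are extremely heavy. Crucially, there is no algebraic shortcut to fall back on, precisely because the series is transcendental; this is exactly why $\cS_1$, unlike every previously treated quadrant model, requires a genuine certified-guessing proof of D-finiteness.
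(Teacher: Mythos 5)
Your route is genuinely different from the paper's. The paper does \emph{not} certify guessed recurrences for the array $q(i,j;n)$ by induction on the defining recurrence; instead it guesses two differential ideals $\mathfrak{I}_x,\mathfrak{I}_y$ annihilating the boundary series $Q(x,0;t)$ and $Q(0,y;t)$ (checking they have Hilbert dimension $0$ by Gr\"obner bases \cite{koutschan10c}), and then proves the guesses correct through the \emph{standard kernel method}: the pair $\left(Q(x,0),Q(0,y)\right)$ is characterised as the unique solution in $\qs[[x,t]]\times\qs[[y,t]]$ of the system~\eqref{eq:7-bis} built from the roots $X,Y$ of the kernel, as in~\cite{BoKa10} and Proposition~\ref{prop:quadrant-alg}. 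Concretely, the paper constructs series $F,G$ annihilated by the guessed ideals, shows $F\in\qs[x][[t]]$ via desingularisation of the associated recurrence \cite{abramov99b}, and verifies that $(F,G)$ satisfies~\eqref{eq:7-bis} using D-finite closure properties plus finitely many initial terms. Your proposal replaces this uniqueness argument by direct certification of multivariate recurrences \`a la \cite{kauers07v,KaKoZe08} followed by Takayama-style elimination \cite{takayama90} --- the machinery the paper uses only for the univariate excursion series in Lemma~\ref{lemma:new00}. This is a legitimate alternative in principle; what the paper's detour through~\eqref{eq:7-bis} buys is that all verifications reduce to univariate differential computations, whereas your route requires trivariate guessing and elimination for a model whose \emph{bivariate} guessed Gr\"obner bases are already about 1Mb long, so feasibility is a real risk (which you acknowledge).

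There is, however, one conceptual repair needed. Your opening claim --- that D-finiteness of $Q(x,y;t)$ in $x$, $y$ and $t$ is \emph{equivalent} to $q(i,j;n)$ being ``P-recursive in each of the three indices'' --- is false in the multivariate setting, and \cite{lipshitz-df} does not assert it. Coordinate-wise P-recursiveness, and even finite-dimensionality of the full shift module over $\qs(i,j,n)$, is strictly weaker than D-finiteness of the generating function: the classical example $a(i,j)=1/(i^2+j^2+1)$ has all its shifts in the one-dimensional $\qs(i,j)$-span of $a$, yet its generating function is not D-finite. The correct correspondence is between D-finite series and \emph{holonomic} arrays, so after certifying your guessed recurrences you must additionally verify that the annihilating ideal they generate is holonomic (a dimension computation), not merely that it contains recurrences in each index. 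Relatedly, your reduction to the boundary arrays $q(i,0;n)$ and $q(0,j;n)$ by ``setting $j=0$ (resp.\ $i=0$)'' is only sound if the certified recurrences have the quasi-holonomic shape of Lemma~\ref{lemma:new00} --- positive shifts in $j$ carrying coefficients that vanish at $j=0$ --- and producing such operators is exactly what the Takayama elimination must be shown to achieve; it is a requirement of the method, not automatic. With these two repairs (holonomy check, quasi-holonomic shape), your plan is a valid, if computationally heavier, alternative proof strategy; the final step, deducing D-finiteness of $Q(x,y;t)$ from the boundary series, the algebraic $Q(0,0;t)$ of Lemma~\ref{lemma:new00}, and the rationality of $1/\bigl(xyK(x,y)\bigr)$, agrees with the paper.
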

\begin{proof}
Saying that $Q(x,y;t)$ is D-finite means that the $\set Q(x,y,t)$-vector space
generated by $Q(x,y;t)$ and all its derivatives $D_x^i D_y^j D_t^k\,Q(x,y;t)$
(for $i,j,k \in\set N$) has finite dimension. This vector space is isomorphic
to the algebra $\set Q(x,y,t)[D_x,D_y,D_t]/\mathfrak{I}$, where $\set
Q(x,y,t)[D_x,D_y,D_t]$ is the Ore algebra of linear partial differential
operators with rational function coefficients, and $\mathfrak{I}$ is the left
ideal consisting of all the operators in this algebra which map $Q(x,y;t)$ to
zero. The condition that $Q(x,y;t)$ be D-finite is thus equivalent to
$\mathfrak{I}$ having Hilbert dimension~0.

The structure of the argument is similar to the proof of
Proposition~\ref{prop:quadrant-alg}, but now with differential equations
instead of polynomial equations. Again, it suffices to prove that the
specialisations $Q(x,0)$ and $Q(0,y)$ are D-finite.

First, we calculate the coefficients $q(i,j;n)$ of $Q(x,y;t)$ for $0\leq
i,j,n\leq 100$ using the combinatorial recurrence relation. From these, two
systems of differential operators annihilating $Q(x,0;t)$ and $Q(0,y;t)$,
respectively, can be guessed. The ideals $\mathfrak{I}_{x}$ and
$\mathfrak{I}_{y}$ generated by the guessed operators have Hilbert
dimension~0, as can be checked by a Gr\"obner basis computation. We used
Koutschan's package~\cite{koutschan10c} for this and all the following
Gr\"obner basis computations. The generators of $\mathfrak{I}_{x}$ and
$\mathfrak{I}_{y}$ are somewhat too lengthy to be reproduced here. They are
available on \href{http://www.risc.jku.at/people/mkauers/bobokame}{Manuel
Kauers' website}. We simply report that $\dim_{\set Q(x,t)}\set
Q(x,t)[D_x,D_t]/\mathfrak{I}_{x}= \dim_{\set Q(y,t)}\set
Q(y,t)[D_y,D_t]/\mathfrak{I}_{y}= 11$, and that the Gr\"obner bases for
$\mathfrak{I}_{x}$ and $\mathfrak{I}_{y}$ with respect to a degree order are
roughly 1Mb long.

The proof consists in showing that these guesses are correct. As in the
previous proof, we will first exhibit a pair of series $(F\equiv F(x;t),
G\equiv G(y;t))$ in $\qs[[x,t]] \times\qs[[y,t]]$ that satisfy the guessed
differential equations and certain initial conditions. Then, we will prove
that they satisfy the system
\begin{equation}\label{eq:7-bis}
\left\{
\begin{array}{ccc}
  F(x)+(1+Y)^2G(Y)=&{xY}/t +Q(0,0),\\
F(X)+(1+y)^2G(y)=&{Xy}/t +Q(0,0),
\end{array}
\right.
\end{equation}
which, by the standard kernel method,  characterises the pair
$(Q(x,0),Q(0,y)) $ in $\qs[[x,t]]\times \qs[[y,t]]$. Here, the
algebraic series $X$ and $Y$ that cancel the kernel are given by
$$
\begin{array}{llllll}
   X\equiv X(y;t)&=&\displaystyle \frac{1-\sqrt{1-4t^2\by(1+y)^3}}{2t(1+y)}
\\
\\
&=&t\by(1+y)^2+\by^2(1+y)^5t^3+ O(t^5) & \in \qs[y,\by][[t]],
\\
\\
Y\equiv Y(x;t)& =&\displaystyle\frac{1-tx-2t\bx-\sqrt{1-2tx-4t\bx+t^2x^2}}{2t(x+\bx)}
\\
\\
&=&\bx t+(2\bx^2+1)t^2+ O(t^3)& \in \qs[x,\bx][[t]].
\end{array}
$$
  To exhibit  a power series of $\set Q[[x,t]]$ 
  annihilated by $\mathfrak{I}_{x}$, we construct a nonzero element of
  $\mathfrak{I}_{x}\cap\set Q(x,t)[D_t]$, free of~$D_x$. For such an operator
  it is easy to check that it admits a solution in $\set
  Q(x)[[t]]$. To determine this series uniquely, we prescribe the
  first coefficients of its expansion in $t$ to  coincide
  with those of   $Q(x,0)$.  It remains to  prove that this series,
  henceforth denoted $F\equiv F(x;t)$, actually
  belongs to  $\qs[x][[t]]$. In order to do so,  we look at  the
  recurrence relation satisfied by its coefficients (this recurrence
  can be derived from the  
  differential equation in $t$). Alas, it reads
$$
P(n,x) f_n(x)= \sum_i P_i(n,x) f_{n-i}(x), 
$$
where $P$ and the $P_i$'s are polynomials, and $P$ actually involves $x$ (we
denote by $f_n(x)$ the coefficient of $t^n$ in the series $F(x;t)$). However,
once we apply a desingularisation algorithm to this
recurrence~\cite{abramov99b}, the leading coefficient becomes free of~$x$.
Since the first values of $f_n(x)$ are polynomials in $x$, this implies that
our series $F(x;t)$ lies in $\set Q[x][[t]]\subseteq\set Q[[x,t]]$. In the
same way we construct a series $G(y;t)$ in $\set Q[[y,t]]$ that is cancelled
by $\mathfrak{I}_{y}$ and coincides with $Q(0,y;t)$ far enough to be uniquely
determined by these two conditions.

It remains to prove that the pair $(F,G)$ satisfies~\eqref{eq:7-bis}. Using
algorithms for D-finite closure properties, together with the defining
algebraic equations for $X$, $Y$ and $Q(0,0)$ (Lemma~\ref{lemma:new00}), we
can compute differential equations in $t$ for the left-hand side and the
right-hand side of these identities (which are both power series in $t$, with
coefficients in $\qs[y,\by]$ or $\qs[x, \bx]$, respectively). After checking
an appropriate number of initial terms, it follows that $F$ and $G$ form a
solution of~\eqref{eq:7-bis}, and this concludes our proof.
\end{proof}

The next result proves that the D-finite generating function $Q(x,y;t)$ for
the model $\cS_1$ is transcendental. We have not been able to find a direct
proof of this; arguments based on asymptotics for coefficients of several
specialisations are not enough, since for instance, the coefficients of
$Q(1,0;t)$ appear to grow like $c\,(2+\sqrt{3})^n/n^{3/2}$, for some
constant~$c$, and those of $Q(0,1;t)$ appear to grow like $c\,(4 \sqrt{2})^n/
n^{3/2}$, for some constant~$c$, but these asymptotic behaviours are not
incompatible with algebraicity~\cite{flajolet-context-free}. We present a more
sophisticated argument, reducing the transcendence question to linear algebra.
Our reasoning is inspired by Section~9 in~\cite{Hoeij97}. The main idea can be
traced back at least to~\cite{Ohtsuki82}; similar arguments are used
in~\cite{Chudnovsky80,BeBe85} and \cite[\S2]{CoSiTrUl02}. The argument may be
viewed as a general technique for proving transcendence of D-finite power
series;\footnote{There exists an alternative algorithmic procedure based
on~\cite{Singer80}, that allows in principle to answer this
question~\cite{Singer14}. It involves, among other things, factoring linear
differential operators, and deciding whether a linear differential operator
admits a basis of algebraic solutions. However, this procedure would have a
very high computational cost when applied to our situation.} even though based
on ideas in~\cite{Hoeij97}, we write down the proof in detail, since we have
not spotted out such a proof elsewhere in the literature. The needed
background on linear differential equations can be found
in~\cite[\S4.4.1]{PuSi03} or \cite[\S20]{Poole1960}.

\begin{prop}\label{prop:trans}
   The complete generating function $Q(x,y;t)$ for the model $\cS_1$
  is not algebraic.
\end{prop}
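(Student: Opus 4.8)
The plan is to reduce the transcendence of $Q(x,y;t)$ to a local analysis of a linear differential operator, exploiting the classical fact that an algebraic power series is annihilated by a linear ODE whose \emph{entire} solution space consists of algebraic functions (this is the technique of \cite{Hoeij97}, going back to \cite{Ohtsuki82}). Concretely, if $Q(x,y;t)$ were algebraic over $\qs(x,y,t)$, then so would be the specialisation $Q(0,y;t)$ over $\qs(y,t)$; it therefore suffices to prove that $Q(0,y;t)$ is transcendental. By Proposition~\ref{prop:S1-D-finite} this series is D-finite, and the ideal $\mathfrak{I}_y$ computed there provides an explicit linear operator in $D_t$, of order at most $11$, that annihilates it.

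First I would pass to the \emph{minimal-order} operator $L\in\qs(y,t)[D_t]$ annihilating $Q(0,y;t)$ — for instance by extracting the minimal monic right factor of a generator of $\mathfrak{I}_y\cap\qs(y,t)[D_t]$. Minimality is the crucial point: the solution space of $L$ is spanned by the branches obtained from $Q(0,y;t)$ by analytic continuation, so if $Q(0,y;t)$ were algebraic then \emph{every} solution of $L$ would be algebraic. An operator with a full basis of algebraic solutions has finite differential Galois group; in particular it is Fuchsian, all its local exponents are rational, and its local monodromy is semisimple, i.e.\ it admits no logarithmic solution at any singular point. Transcendence will follow as soon as one of these necessary conditions is seen to fail.

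Next I would carry out the local analysis of $L$ by computer algebra. Its singularities are the roots in $t$ of the leading coefficient, lying in $\overline{\qs(y)}$, and at each of them one forms the indicial polynomial — the characteristic polynomial of the leading term of the associated recurrence — whose roots are the local exponents. The whole test then reduces to linear algebra over $\qs(y)$: one checks whether some indicial polynomial has an irrational root, and, at resonant points where two exponents differ by a positive integer, whether the Frobenius obstruction (the solvability of a linear system for the series coefficients) forces a logarithm. I expect to exhibit a singular point of $L$ carrying either an irrational local exponent or a genuine logarithmic solution. Since the dominant real singularity is compatible with algebraicity — as the quoted asymptotics $c\,(2+\sqrt3)^n/n^{3/2}$ and $c\,(4\sqrt2)^n/n^{3/2}$ of $Q(1,0;t)$ and $Q(0,1;t)$ already show — this obstruction must, and I believe does, live at a non-dominant singularity invisible to coefficient asymptotics. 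Either phenomenon contradicts the finiteness of the Galois group, proving that $Q(0,y;t)$, and hence $Q(x,y;t)$, is transcendental.

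The main obstacle is precisely the minimality step, and more generally ensuring that the ``bad'' local behaviour is genuinely attached to the branch $Q(0,y;t)$: a logarithm or an irrational exponent belonging to a solution of a \emph{non-minimal} operator says nothing about the nature of $Q$. One must therefore certify that $L$ is minimal, or equivalently that the offending local solution lies in the differential module generated by $Q(0,y;t)$. This is delicate because the operators involved have order $11$ with coefficients in the function field $\qs(y,t)$, so both the factorisation/minimisation and the indicial computations are heavy and must be rigorously certified — for instance with the same desingularisation and Gröbner-basis tools already used in the proof of Proposition~\ref{prop:S1-D-finite}. Once minimality is secured, the remaining local checks are routine finite-dimensional linear algebra.
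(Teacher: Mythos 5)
Your setup is the same as the paper's (reduce to a specialisation, invoke the fact that the \emph{minimal} annihilating operator of an algebraic function has a full basis of algebraic solutions, hence is Fuchsian with rational exponents and no logarithmic solutions, citing \cite{Hoeij97,Ohtsuki82,CoSiTrUl02}), but your plan stalls exactly at the point you yourself flag, and you offer no way past it. You propose to compute the minimal operator $L$ of the series and then exhibit an irrational exponent or a logarithm \emph{for $L$}. Two problems. First, certifying that a given right factor of the order-$11$ operator is the minimal annihilator of the combinatorial series is essentially as hard as the transcendence statement itself; you acknowledge this (``the main obstacle is precisely the minimality step'') but the proof never closes the loop. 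Second, and more fatally, the hoped-for local obstruction need not exist: a Fuchsian operator can have entirely rational exponents and no logarithmic solution at any point and still have infinite monodromy (non-Schwarz hypergeometric operators are standard examples), so transcendence is compatible with all your local tests passing. In the present model this is not hypothetical --- the paper's local analysis of $\mathcal{L}$ (for $Q(1,0;t)$) shows \emph{all} exponents at all singular points are rational, and the logarithm at $t=0$ is only known to occur in a basis of solutions of $\mathcal{L}$, not in the differential module generated by $Q(1,0;t)$; it is used solely to conclude that the minimal operator $\mathcal{M}$ is a \emph{strict} right divisor of $\mathcal{L}$, hence of order $n\leq 10$.

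The idea you are missing is that one never needs to know $\mathcal{M}$: under the algebraicity hypothesis one can \emph{bound its size a priori} and then refute its existence by undetermined coefficients. Concretely, the paper uses the structure theorem of \cite[\S2.2]{CoSiTrUl02} --- $\mathcal{M}$ Fuchsian of the shape~\eqref{eq:M10} with $A$ squarefree and $\deg a_{n-i}\le \deg(A^i)-i$ --- so it suffices to bound $\deg A$. The true singularities of $\mathcal{M}$ lie among the $6$ finite true singularities of $\mathcal{L}$, and the number of apparent singularities is bounded via Fuchs' relation~\eqref{eq:Fuchs}: each apparent singularity contributes $S_p(\mathcal{M})\geq 1$, while at each true singularity $S_p(\mathcal{M})\geq S_p^{(n)}(\mathcal{L})$, the sum of the $n$ smallest exponents of $\mathcal{L}$ there minus $0+1+\cdots+(n-1)$; this yields~\eqref{eq:A2bound} and the explicit bounds $\deg A\leq 58$ for $n=10$ (and smaller bounds for $n\leq 9$). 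Then a candidate $\hat{\mathcal{M}}=\sum_{i=0}^{10}\sum_{j=0}^{570+i}m_{i,j}t^jD_t^i$ annihilating $Q(1,0;t)$ modulo $t^{6400}$ gives a linear system of $6400$ equations in $6336$ unknowns whose only solution is trivial --- contradiction. Note also two practical divergences: the paper specialises to the univariate series $Q(1,0;t)$ over $\qs$, not to $Q(0,y;t)$ over the function field $\qs(y,t)$, which would make the singularity analysis and the final linear algebra substantially heavier; and it explicitly records that coefficient asymptotics cannot decide the question, which you correctly echo. As written, your argument is a program whose two load-bearing steps (minimality certification, existence of a local obstruction for $L$) are respectively unproved and, in this instance, likely false; it does not yield a proof.
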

\begin{proof}
It is sufficient to prove that the series
\[Q(1,0;t) = 1+t+4t^2+8t^3+39t^4+98t^5 + O(t^{6})\] 
is not algebraic.

Assume by contradiction that $Q(1,0;t)$ is algebraic. Recall that it is
D-finite, and denote by $\mathcal{M}(t,D_t)$ the monic linear differential
operator in $\mathbb{Q}(t)[D_t]$ of minimal order such that
$\mathcal{M}(Q(1,0;t))=0$. Denote by $\mathcal{L}$ the 11th order linear
differential operator in $\mathbb{Q}(t)[D_t]$ computed in the proof of
Proposition~\ref{prop:S1-D-finite} such that $\mathcal{L}(Q(1,0;t))=0$. By
minimality, $\mathcal{M}$ right-divides $\mathcal{L}$ in $\mathbb{Q}(t)[D_t]$.
Moreover, $\mathcal{M}$ strictly right-divides $\mathcal{L}$, since otherwise
$\mathcal{L}$ would have a basis of algebraic
solutions~\cite[\S2]{CoSiTrUl02}, which is ruled out by a local analysis
showing that logarithms occur in a basis of solutions of $\mathcal{L}$ at
$t=0$. Therefore, the order $n$ of $\mathcal{M}$ is at most 10. On the other
hand, it follows from~\cite[\S2.2]{CoSiTrUl02} that $\mathcal{M}$ is Fuchsian
and has the form
\begin{equation} \label{eq:M10}
	\mathcal{M} = D_t^n + \frac{a_{n-1}(t)}{A(t)}D_t^{n-1} + \cdots + \frac{a_0(t)}{A(t)^n},
\end{equation}	
where $A(t)$ is a squarefree polynomial and $a_{n-i}(t)$ is a polynomial of
degree at most $\deg(A^i)-i$.

To get a contradiction (thus proving that $Q(1,0;t)$ is transcendental), we
use the following strategy: (i) get a bound on the degree of $A$ ---and thus
on the degrees of the coefficients of $\mathcal{M}$--- in terms of $n$ and of
local information of $\mathcal{L}$; (ii) use this bound and linear algebra to
show that such an operator $\mathcal{M}$ cannot annihilate $Q(1,0;t)$.

For (i), it is sufficient to bound the number of singularities of
$\mathcal{M}$ (i.e., the roots of $A(t)$, and possibly $t=\infty$). Let us
write $A(t) = A_1(t)A_2(t)$, where the roots of $A_1(t)$ are the finite
\emph{true} singular points of $\mathcal{M}$, and the roots of $A_2(t)$ are
the finite \emph{apparent} singular points of $\mathcal{M}$ (these are the
roots $p \in \mathbb{C}$ of $A(t)$ such that the equation
$\mathcal{M}(y(t))$=0 possesses a basis of analytic solutions at $t=p$).

Since $\mathcal{M}$ right-divides $\mathcal{L}$, the true singularities of
$\mathcal{M}$ form a subset of the true singularities of $\mathcal{L}$. Now,
$\mathcal{L}$ is explicitly known, and its local analysis shows that it is
Fuchsian and that it admits the following local data, presented below as pairs
(point $p$, (sorted) list of exponents of $\mathcal{L}$ at~$p$):
\begin{align*}
0, & \quad [-2, -2, -3/2, -1, -1/2, 0, 1, 2, 3, 4, 5], \\
1, & \quad [-1, 0, 1, 2, 3, 4, 5, 6, 7, 8, 9], \\
\infty, & \quad [1, 5/3, 2, 7/3, 8/3, 3, 10/3, 11/3, 4, 13/3, 5], \\
\pm \frac{\sqrt{3}}{9}, & \quad [0, 1, 3/2, 2, 5/2, 3, 3, 4, 5, 6, 7], \\
3 \pm 2 \sqrt{2}, & \quad [0, 1/2, 1, 3/2, 2, 5/2, 3, 4, 5, 6, 7], \\
55 \text{\, apparent singularities}, & \quad [0, 1, 2, 3, 4, 5, 6, 7, 8, 9, 11]. 
\end{align*}
Therefore, $\mathcal{L}$ has $6$ finite true singularities, and this implies that $\deg A_1$ is at most 6. It remains to upper bound $\deg A_2$; this will result from an analysis of the apparent singularities of $\mathcal{M}$. To do this, 
for any point $p$ in $\mathbb{C} \cup \{\infty\}$ we denote by
$S_p(\mathcal{M})$ the following quantity:  
	\[S_p(\mathcal{M}) = (\text{sum of local exponents of
          \,} \mathcal{M} \text{\, at \,} p) - (0 + 1 + \cdots +
        (n - 1)).\] 
If $p$ is an ordinary point of $\mathcal{M}$, 
then $S_p(\mathcal{M}) = 0$; therefore
$S_p(\mathcal{M})$ is an interesting quantity only for
singularities $p$ of $\mathcal{M}$. 
Moreover, the quantities $S_p(\mathcal{M})$ are globally related
by the so-called Fuchs' relation~\cite[\S4.4.1]{PuSi03}:
\begin{equation}\label{eq:Fuchs}
\sum_{p \in \mathbb{C} \cup \{\infty\}} S_p(\mathcal{M}) = \sum_{p \text{\,
singularity of \,} \mathcal{M}} S_p(\mathcal{M}) = \; - n(n-1).
\end{equation}	

If $p$ is an apparent singularity of $\mathcal{M}$, then $S_p(\mathcal{M}) \geq 1$, since the local exponents of $\mathcal{M}$ at $p$ are $n$ distinct non-negative integers, the largest one being at least equal to~$n$~\cite[Lemma~2.2]{CoSiTrUl02}.
We use this observation in conjunction with Fuchs'
relation~\eqref{eq:Fuchs} in order to bound the number of
apparent singularities of $\mathcal{M}$:
\begin{equation}\label{eq:app-bound}
\#\{\text{ apparent singularities of } \mathcal{M}  \, \}  \; \leq \; -n(n-1) - \sum_{p \text{\, true singularity of \,} \mathcal{M}} S_p(\mathcal{M}).
\end{equation}
For any true singularity $p$ of $\mathcal{M}$, the  list of local exponents of $\mathcal{M}$ at $p$ is a sublist of the list of exponents of $\mathcal{L}$ at $p$. Therefore  $S_p(\mathcal{M}) \geq S_p^{(n)}(\mathcal{L})$, where $S_p^{(n)}(\mathcal{L})$ denotes the sum of the smallest~$n$ elements from the list of exponents of $\mathcal{L}$ at~$p$, minus $(0 + 1 + \cdots + (n - 1))$. Thus, the sum $\displaystyle{\sum_{p \text{\, true singularity of \,} \mathcal{M}} S_p(\mathcal{M})}$ is minimal if the true singularities of $\mathcal{M}$ are precisely those true singularities $p$ of $\mathcal{L}$ for which $S_p^{(n)}(\mathcal{L})$ is non-positive.
It then follows from~\eqref{eq:app-bound} that 
\begin{equation}\label{eq:A2bound}
\deg(A_2)  \; \leq \; -n(n-1) - \sum_{p \text{\, true singularity of \,} \mathcal{L}} \min(0, S_p^{(n)}(\mathcal{L})).
\end{equation}

For instance, for $n=10$, the quantities $S_p^{(n)}(\mathcal{L})$ where $p$
runs through the set $\{ 0, 1, \infty, \frac{\sqrt{3}}{9},
-\frac{\sqrt{3}}{9}, 3 + 2 \sqrt{2}, 3 - 2 \sqrt{2}\}$ of true singularities
of $\mathcal{L}$ are equal respectively to $-42, -10, -17, -17, -17, -39/2,
-39/2$. Therefore, the inequality~\eqref{eq:A2bound} reads $\deg(A_2) \leq -
10 \times 9 - (42 + 10 + 17 + 17 + 17 + 39/2 + 39/2) = 52$, and thus $\deg(A)
= \deg(A_1) + \deg(A_2)\leq 6+52 = 58.$

A similar computation for $n=1, 2, \ldots, 9$ implies the respective upper
bounds $9, 12, 15, 19, 24, 31, 40, 47, 53$ for the degree of $A$. We conclude
that the minimal-order annihilating operator~$\mathcal{M}$ has the
form~\eqref{eq:M10} with $n\leq 10$ and coefficients of degree $\deg(A)\leq
58$ and $\deg(a_{n-i}) \leq 57i$ for $0\leq i \leq n-1$.

To summarise, under the assumption that $Q(1,0;t)$ is algebraic, we proved
that there exists a non-zero linear differential operator that annihilates it
(namely $A(t)^n \cdot \mathcal{M}$), of order at most $10$ and polynomial
coefficients of degrees at most~$580$. Moreover, this operator writes
$\hat{\mathcal{M}} = \sum_{i=0}^{10} \sum_{j=0}^{570+i} m_{i,j}t^j D_t^i$. The
equality $\hat{\mathcal{M}}(Q(1,0;t)) = 0 \bmod t^{6400}$ then yields a linear
system of 6400 equations in the 6336 unknowns $m_{i,j}$, and a linear algebra
computation shows that its unique solution is the trivial one $m_{i,j}=0$ for
all $i$ and $j$, contradicting the fact that $\mathcal{M} \neq 0$.

In conclusion, $Q(1,0;t)$ is transcendental.
\end{proof}

\section{Final comments and questions}
\label{sec:final}

We have determined strategies for classifying the generating functions of
walks with small steps in the positive octant, based on the algebraic kernel
method, reductions to lower dimensions, and computer algebra.

In this way, we have been able to prove D-finiteness for all models for which
we had guessed a differential equation. All these models have a finite group.
However, we have not ruled out the possibility of non-D-finite models having a
finite group. Indeed, for the 19 models discussed in
Section~\ref{sec:nonHzero} and listed in Figure~\ref{fig:19}, the group is
finite but we have not discovered any differential equations. If one of these
models was proved to be non-D-finite, it would be the end of the
correspondence between finite groups and D-finiteness that holds for quadrant
walks. One approach to prove non-D-finiteness might be a 3D extension
of~\cite{BoRaSa12}.

Here are other natural questions that we leave open.
\begin{itemize}
\item {\bf{Groups.}} Can one prove that the groups that we conjecture infinite
are infinite indeed? Some techniques that apply to quadrant models are
presented in~\cite[Section~3]{BoMi10}.
\item {\bf{Transcendence.}} For the complete generating function of many
models with a finite group, we have proved D-finiteness but not algebraicity.
Can one prove that these models are in fact transcendental, either using an
asymptotic argument or by adapting the proof of Proposition~\ref{prop:trans}?
\item {\bf Non-D-finiteness.} Can one prove the non-D-finiteness of certain
quadrant models with repeated steps with the technique of~\cite{BoRaSa12}? Can
it be extended to octant models?
\item {\bf Human proofs.} Can one give ``human proofs'' for the computer
algebra results of Section~\ref{sec:computer}?
\item {\bf Repeated steps.} If we investigate systematically quadrant models
with repeated steps (and arbitrary cardinality), do we find more attractive
models, like those of Figure~\ref{fig:quatre}?
\item {\bf{Closed form expressions.}} Can one express all the D-finite length
generating functions that we obtained in terms of hypergeometric series, as
was done for quadrant walks in~\cite{BCHKP}?
\end{itemize}


\bigskip \noindent {\bf Acknowledgements.} S.M. would like to thank Simon
Fraser University and the Inria-MSR joint lab, where he was working while the
majority of this research was completed.


\bibliographystyle{plain}

\spacebreak
\appendix

\section{The number of non-equivalent 2D or 3D models}
\label{app:interesting}

We now want to prove Proposition~\ref{prop:interesting}. To begin
with, we determine the polynomial counting models with no unused step.
\subsection{Models with no unused step}
\label{sec:IsoRed-A}

\begin{prop}\label{propA}

The generating function of step sets that contain no unused step, counted
up to permutations of the coordinates, is
  \begin{multline*}
J=
1+3u+21u^2+179u^3+1294u^4+7041u^5+28917u^6+92216u^7+235338u^8\\
+492509u^9+860520u^{10}+1271528u^{11}+1603192u^{12}+1734397u^{13}\\
+1614372u^{14}+1293402u^{15}+890395u^{16}+524638u^{17}+263008u^{18}\\
+111251u^{19}+39256u^{20}+11390u^{21}+2676u^{22}+500u^{23}+73u^{24}+9u^{25}+u^{26}.
 \end{multline*}
\end{prop}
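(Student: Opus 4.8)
The plan is to combine the characterisation of unused steps from Lemma~\ref{lem:unused} with inclusion--exclusion and Burnside's lemma, always refining the count by cardinality (the variable $u$). By Lemma~\ref{lem:unused}, together with the permuted variants subsumed in case $(D)$, a model $\cS\subseteq\{\bar 1,0,1\}^3\setminus\{(0,0,0)\}$ contains an unused step if and only if it satisfies at least one of the seven conditions $A_x,A_y,A_z,B,C_x,C_y,C_z$, where $A_y,A_z$ and $C_x,C_y$ are the coordinate-permuted analogues of $A_x$ and $C_z$. Hence a model has no unused step precisely when it avoids all seven. Since the group $\mathfrak{S}_3$ permuting the three coordinates preserves cardinality and, by case $(D)$, preserves the property of having no unused step, Burnside's lemma gives
\[ J=\tfrac16\sum_{g\in\mathfrak{S}_3} J_g=\tfrac16\bigl(J_{\mathrm{id}}+3J_{\tau}+2J_{\sigma}\bigr), \]
where $J_g$ is the generating function of the $g$-invariant models with no unused step, and $\tau,\sigma$ represent the three conjugacy classes (identity, transposition, $3$-cycle). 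It remains to compute each $J_g$.

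The key device is that each of the seven conditions, viewed as an indicator on models, is a signed sum of indicators of downward-closed families $\mathbbm{1}[\cS\subseteq U]$ for explicit subsets $U$ of the $26$ steps. For instance, $A_x$ (no $x^+$ step, at least one $x^-$ step) equals $\mathbbm{1}[\cS\subseteq\{x\le 0\}]-\mathbbm{1}[\cS\subseteq\{x=0\}]$; the condition $B$ equals $\mathbbm{1}[\cS\subseteq N]-\mathbbm{1}[\cS\subseteq\emptyset]$, with $N$ the $19$ steps having a negative coordinate; and $C_z$ unfolds into four such terms (one pair encoding ``$001\in\cS$ and $\cS\subseteq\{x+y\le 0\}$'', the other encoding the exclusion $\cS\not\subseteq\{001,00\bar 1\}$). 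Consequently, for any subset $T$ of the seven conditions, the indicator of $\bigcap_{b\in T}b$ is a signed sum of terms $\mathbbm{1}[\cS\subseteq U]$, with $U$ the intersection of the corresponding containers. Restricted to $g$-invariant models, each such term contributes $\prod_O(1+u^{|O|})$, the product running over the $g$-orbits $O$ lying in the $g$-invariant core $U\cap gU$ (any $g$-invariant model inside $U$ is automatically inside $U\cap gU$). Thus $J_g=\sum_T(-1)^{|T|}N_g(T)$, where each $N_g(T)$ is a signed sum of products $\prod_O(1+u^{|O|})$ obtained by elementary orbit counting.

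To keep the sum over the $2^7=128$ subsets $T$ manageable, I would first record the many vanishing intersections. Because $C_x$ forces the step $100$ while $A_x$ forbids every $x^+$ step, $A_x\cap C_x=\emptyset$, and likewise for $y,z$; because $001$ has no negative coordinate, $B\cap C_z=\emptyset$ and its permutations; and because $001$ violates the constraint $j+k\le 0$ demanded by $C_x$, any two distinct $C$-conditions are incompatible. After discarding these, the surviving $T$ are the subsets of $\{A_x,A_y,A_z,B\}$, each optionally augmented by a single $C$-condition (which then excludes $B$ and the matching $A$). The $\mathfrak{S}_3$-symmetry collapses these into a small number of representative shapes, for which one tabulates the intersected container $U$, its core $U\cap gU$, and the multiset of $g$-orbit sizes; summing the resulting signed powers of $(1+u)$ and of the $(1+u^{s})$ and averaging via Burnside produces $J$.

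The main obstacle is precisely the bookkeeping of this last step: correctly determining, for each surviving $T$ and each representative $g$, the container $U$, its $g$-invariant core, and the orbit sizes therein --- in particular tracking how the ``contains $001$'' part of a $C$-condition interacts with the $A$- and $B$-containers, and how a transposition or $3$-cycle splits each container into fixed points and $2$- or $3$-cycles. No individual computation is hard, but there are enough of them that a systematic (and readily automated) tabulation is needed to avoid error; this is what makes the proof tedious rather than deep.
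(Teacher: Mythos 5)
Your proposal follows essentially the same route as the paper's proof in Appendix~A.1: Burnside's lemma over the three conjugacy classes of $\mathfrak{S}_3$, inclusion--exclusion over the seven conditions $A_x,A_y,A_z,B,C_x,C_y,C_z$ of Lemma~\ref{lem:unused}, the same vanishing intersections (your $A_x\cap C_x=B\cap C_z=C_x\cap C_z=\emptyset$ are, up to relabelling, the paper's $A_zC_z=BC_z=C_yC_z=0$), and the counting of $g$-invariant sets inside each container as a product of $(1+u^{|O|})$ over $g$-orbits $O$. The only difference is one of execution: the paper actually carries out the tabulation --- computing, e.g., $A_x=(1+u)^{17}-(1+u)^8$, $C_z=u\bigl((1+u)^{16}-(1+u)\bigr)$, and using the absorption $A_{xyz}=A_{xyz}B$ to shorten the sum --- and arrives at explicit closed forms for $J^{\id}$, $J^{(1,2)}$ and $J^{(1,2,3)}$ before averaging, whereas you describe the systematic (and correct) computation scheme without performing it, so your argument is a sound blueprint but stops short of verifying the stated coefficients.
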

We prove this using Burnside's lemma~\cite[Lem.~7.24.5]{stanley-vol2}:
\begin{equation}\label{J-burnside}
 J=\frac 1 6 \sum_{\sigma \in \Sn_3} J^{\sigma},
\end{equation}
where $J^{\sigma}$ counts sets that contain no unused step and are left
invariant under the \p\ $\sigma$. By symmetry, it suffices to determine
$J^{\sigma}$ when $\sigma=\id$, when $\sigma$ is the 2-cycle $(1,2)$ and when
$\sigma$ is the 3-cycle $(1,2,3)$. This is done in
Sections~\ref{sec:IsoRed-Pid},~\ref{sec:IsoRed-Pxy},
and~\ref{sec:IsoRed-Pxyz}, respectively.

\subsubsection{No prescribed symmetry: the series $\boldsymbol{J^{\id}}$}
\label{sec:IsoRed-Pid}

There are $26=3^3-1$ non-trivial steps, and thus the generating function of
all models is $(1+u)^{26}.$

Now we want to remove sets that contain unused steps, using the
characterisation of Lemma~\ref{lem:unused}. We use the notation $A_x$ for the
generating function of models that satisfy Condition $(A_x)$, and so on. We
extrapolate this so that $A_{xy}$ counts models satisfying $(A_x)$ and the
condition $(A_y)$ obtained from $(A_x)$ by replacing $x$ by $y$. Similarly,
$A_xC_z$ counts models satisfying both $(A_x)$ and $(C_z)$, and is \emm not,
the product of the series $A_x$ and $C_z$. We hope this will not raise any
confusion. Let us note that
$$
A_zC_z=BC_z= C_yC_z=  0 \quad \hbox{and}\quad A_{xyz}=A_{xyz}B.
$$
Using inclusion-exclusion,  obvious symmetries ($A_x=A_y$ for
instance) and the above  relations, we obtain
\begin{equation}\label{JIE}
J^{\id}=
(1+u)^{26}-3A_x-B-3C_z+3A_{xy}+3A_xB+6A_xC_z-3A_{xy}B-3A_{xy}C_z
.
\end{equation}

Let us now determine the series $A_x$, $B$ and $C_z$. We will then give
without details the values of the other five series.

For $A_x$, we count models containing no $x^+$-step, but at least one
$x^-$-step. Among the 26 steps, $26-9=17$ are not $x^+$. Among these 17
steps, 8 are not $x^-$ either. Hence
$$
A_x=(1+u)^{17}-(1+u)^8 =[17]-[8],
$$
where we denote $[i]=(1+u)^i$.

For $B$, we count non-empty models in which each step has a negative
coordinate. There are $26-7=19$ such steps, and thus
$$
B=[19]-1.
$$

Finally, a model satisfying $(C_z)$ must contain $001$ and consists
otherwise of steps $ijk$ with $i+j \le 0$. There are $26-9=17$ steps
satisfying this inequality, among which $001$, which is necessarily in
$\cS$. Thus
$$
C_z=u\left( [16]-[1]\right)
$$
as the model must not be a subset of $\{ 001, 00\bar 1\}$.

Let us now list the values of the other series occurring in~\eqref{JIE}:
\begin{eqnarray*}
  A_{xy}&=&[11]-2[5]+[2]
\\
A_xB&=& [14]-[5]
\\
A_xC_z&=&u\left( [13]-[4]\right)
\\
A_{xy}B&=&[10]-2[4]+[1]
\\
A_{xy}C_z&=&u\left( [10]-2[4]+[1]\right).
\end{eqnarray*}
Returning to~\eqref{JIE}, this now gives the generating function of models with no
unused step:
\begin{multline}
  \label{Jid}
J^{\id}=[26]-[19]-3[17]+3[14]+3[11]-3[10]+3[8]-9[5]+6[4]+3[2]-3[1]+1
\\+3u\left( -[16]+2[13]-[10]\right).
\end{multline}

\subsubsection{With an $xy$-symmetry: the series $\boldsymbol{J^{(1,2)}}$}
\label{sec:IsoRed-Pxy}
We now count models $\cS$ with no unused step that have  an
$xy$-symmetry:  $ijk\in \cS \Leftrightarrow jik\in \cS$. We
revisit the above argument with this additional constraint. 
The set of all 26 steps has 17 orbits under the action of the
  2-cycle $(1,2)$: 8 of them are singletons, and 9 are pairs. Hence
  the generating function of $xy$-symmetric sets is $ [8][[9]]$
where $[[j]]$ stands for $ (1+u^2)^j$.
We recycle the notation $A_x$, $B$, and so on, but we add bars to
indicate that we only count models with an $xy$-symmetry:
$\overline{A_x}, \overline B$, etc. We apply again inclusion-exclusion to count symmetric models with no unused step,
noting that
$$
\overline{A_x}=\overline{A_y}=\overline{A_{xy}},  \quad 
\overline{C_x}=\overline{A_zC_z}=\overline{BC_z}=0 
\quad \hbox{and} \quad \overline{A_{xz}B}= \overline{A_{xz}}.
$$
We thus obtain:
$$
J^{(12)}= [8][[9]]-\overline{A_x}-\overline{A_z}-\overline B- \overline{C_z}
+\overline{A_xB} +\overline{A_zB} +
\overline{A_xC_z}.
$$
Let us determine $\overline{A_x}$.  We observe that 5 singleton orbits are
not $x^+$ (among which 2 are not $x^-$ either), and that 3 orbits of
size 2 are not $x^+$ (all of them are $x^-$). This gives
$$
\overline{A_x}= [5][[3]]-[2].
$$
For  $\overline{A_z}$, we find that  5 singleton orbits are
not $z^+$ (among which 2 are not $z^-$ either), and that 6 orbits of
size 2 are not $z^+$ (among which 3 are not $z^-$ either). This gives
$$
\overline{A_z}= [5][[6]]-[2][[3]].
$$
We further obtain
\begin{eqnarray*}
   \overline B &=&[5][[7]]-1
\\
 \overline{C_z}&=&u([4][[6]]-[1])
\\
 \overline{A_xB} &=&[4][[3]]-[1]
\\
 \overline{A_zB} &=&[4][[5]]-[1][[2]]
\\
\overline{A_xC_z}&=&u([4][[3]]-[1]),
\end{eqnarray*}
so that finally
\begin{multline}\label{J12}
  J^{(1,2)}= [8][[9]]-[5]\left( [[7]]+[[6]]+[[3]]\right)
+[4]\left( [[5]]+[[3]]\right) +[2]\left( [[3]]+1\right)
\\-[1]\left([[2]]+1\right)+1 -u[4]\left( [[6]]-[[3]]\right).
\end{multline}

\subsubsection{With an $(x,y,z)$-symmetry: the series $\boldsymbol{J^{(1,2,3)}}$}
\label{sec:IsoRed-Pxyz}
We finally count how many models with no unused step
are left invariant under the action of the cycle
$(1,2,3)$. That is, if $ijk\in \cS$, then $jki$ (and
$kij$) also belong to $\cS$. The set of our 26 steps has 10 orbits
under the action of the 3-cycle: 2 of them are singletons, and 8
contain 3 elements. Hence the generating function of symmetric models is $[2]\langle8\rangle$,
where $\langle j\rangle=(1+u^3)^j$.  We now use tildes above our generating functions to indicate the
invariance under $(1,2,3)$. Noting that
$$
\widetilde{A_x}=\widetilde{A_y }=\widetilde {A_z}=\widetilde {A_xB}
 \quad \hbox{and} \quad \widetilde{C_z}=0,
$$
the inclusion-exclusion reduces to
\begin{equation}\label{J123}
J^{(1,2,3)}=[2]\langle  8\rangle-\widetilde B
= [2]\langle8\rangle-[1]\langle6\rangle+1,
\end{equation}
since there are 1 non-positive singleton orbit  and 6
non-positive orbits of size 3.

\noindent
\begin{proof}[Proof of Proposition~\ref{propA}] We apply Burnside's
  formula~ \eqref{J-burnside} with $J^{\id}$, $J^{(1,2)}$ and
  $J^{(1,2,3)}$ given by~\eqref{Jid}, \eqref{J12} and~\eqref{J123} respectively.
  
\end{proof}
\subsection{Models with no unused step and dimension at most 1}
\label{sec:IsoRed-A1}
We now establish the counterpart of Proposition~\ref{propA} for models of
dimension at most 1.

\begin{prop}
\label{propA1}
  The generating function of models of dimension at most $1$, having
  no unused step, counted up to permutations of the coordinates, is
  \begin{multline*}
    K= 1+3\,u+21\,{u}^{2}+106\,{u}^{3}+315\,{u}^{4}+616\,{u}^{5}+846\,{u}^{6}
+844\,{u}^{7}\\+622\,{u}^{8}+341\,{u}^{9}+138\,{u}^{10}+40\,{u}^{11}+8\,
{u}^{12}+{u}^{13}.
  \end{multline*}
\end{prop}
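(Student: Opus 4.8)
The plan is to reuse, verbatim in spirit, the Burnside strategy of Proposition~\ref{propA}. Both defining properties here --- ``no unused step'' and ``dimension at most $1$'' --- are stable under permutations of the coordinates (for the first this is exactly why case $D$ was added in Lemma~\ref{lem:unused}; for the second it is immediate from Definition~\ref{def:dim}). Writing $K^{\sigma}$ for the generating function of the $\sigma$-invariant models having no unused step and dimension at most $1$, Burnside's lemma as in~\eqref{J-burnside} gives
$$K=\tfrac16\bigl(K^{\id}+3K^{(1,2)}+2K^{(1,2,3)}\bigr),$$
so it suffices to compute the three series $K^{\id}$, $K^{(1,2)}$, $K^{(1,2,3)}$ and then conclude exactly as in the proof of Proposition~\ref{propA}.

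To compute each $K^{\sigma}$, I would first translate ``dimension at most $1$'' via Definition~\ref{def:dim}: a model has dimension at most $1$ if and only if one single coordinate-condition already confines the walks, that is, the $x$-condition suffices, or the $y$-condition, or the $z$-condition suffices. Denote these three events by $\Pi_x,\Pi_y,\Pi_z$. By Lemma~\ref{lem:1D}, $\Pi_x$ is the conjunction of ``(no $y^-$ step) or (every step $ijk$ has $j\ge i$)'' with ``(no $z^-$ step) or (every step $ijk$ has $k\ge i$)'', and $\Pi_y,\Pi_z$ are its coordinate permutations. Thus ``dimension at most $1$'' is $\Pi_x\cup\Pi_y\cup\Pi_z$, and writing $N$ for the family of models with no unused step and $[\,\cdot\,]^{\sigma}$ for the generating function of the $\sigma$-invariant members of a family, inclusion-exclusion yields
$$K^{\sigma}=\sum_{i}[\Pi_i\cap N]^{\sigma}-\sum_{i<j}[\Pi_i\cap\Pi_j\cap N]^{\sigma}+[\Pi_x\cap\Pi_y\cap\Pi_z\cap N]^{\sigma},$$
where the symmetry under $\sigma$ collapses these into a few genuinely distinct terms.

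Each $\Pi_i$ is a disjunction of two ``all steps avoid a prescribed forbidden set'' conditions, so each $\Pi_i$, and each intersection of several of them, specifies that $\cS$ is an arbitrary subset of an explicit sub-collection of the $26$ steps. The $\sigma$-invariant generating function of the subsets of such a collection is a product $(1+u)^{a}(1+u^2)^{b}(1+u^3)^{c}$ read off from the number of $\sigma$-orbits of sizes $1,2,3$ it contains, exactly as in Sections~\ref{sec:IsoRed-Pid}--\ref{sec:IsoRed-Pxyz}. For instance, in the dominant subcase of $\Pi_x$ where every step satisfies both $j\ge i$ and $k\ge i$, precisely $13$ of the $26$ steps survive ($9$ with $i=-1$, $3$ with $i=0$, and the single step $111$), which is the source of the degree $13$ of $K$. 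It then remains to intersect with $N$: by Lemma~\ref{lem:unused} the models with an unused step form the union of $A_x,B,C_z$ and their coordinate images, so each $[\Pi_{\bullet}\cap N]^{\sigma}$ is obtained by a further inclusion-exclusion --- the very same pruning that turned $(1+u)^{26}$ into $J^{\id}$ in~\eqref{JIE} --- but now with every allowed-step collection cut down by the relevant $\Pi$-conditions before counting orbits.

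The genuinely hard part is not conceptual but the bookkeeping. Each $\Pi_i$ already splits into four subset-conditions through its two disjunctions; intersecting two or three of the $\Pi_i$ compounds this, and superimposing the unused-step inclusion-exclusion multiplies the number of terms again, all of which must finally be re-expressed as orbit counts for each $\sigma\in\{\id,(1,2),(1,2,3)\}$. The key devices for keeping this under control are to exploit the coordinate symmetries to merge equal terms (e.g.\ $\Pi_x\cap\Pi_y$, $\Pi_x\cap\Pi_z$, $\Pi_y\cap\Pi_z$ coincide up to relabelling) and the vanishing relations among the unused-step events (the analogues of $A_zC_z=BC_z=0$ used in Section~\ref{sec:IsoRed-Pid}). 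I would guard against arithmetic slips by verifying the low-order coefficients by hand --- the constant term must be $1$ (the empty model) and the coefficient of $u$ must be $3$ (the three singleton orbits $001$, $011$, $111$, all of dimension $0$ and hence unused-step-free) --- and by an independent computer enumeration of all dimension-$\le 1$ models with no unused step of each small cardinality, against which $K$ can be checked term by term.
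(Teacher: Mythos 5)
Your proposal follows the paper's own proof essentially verbatim: Burnside's lemma over $\Sn_3$ in the form $K=\tfrac16\bigl(K^{\id}+3K^{(1,2)}+2K^{(1,2,3)}\bigr)$, the inclusion--exclusion $K^{\id}=3K_x^{\id}-3K_{xy}^{\id}+K_{xyz}^{\id}$ over which single coordinate-condition suffices (translated through Lemma~\ref{lem:1D}), the pruning of unused steps via Lemma~\ref{lem:unused} exactly as in the computation of $J$, and orbit-counting of the admissible step collections by products $(1+u)^a(1+u^2)^b(1+u^3)^c$ --- including the correct identification of the $13$ steps with $j\ge i$ and $k\ge i$ that account for the degree of $K$. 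The only divergence is bookkeeping economy rather than substance: where you propose fully mechanical compound inclusion--exclusions, the paper collapses the unused-step correction in $K_x^{\id}$ to the single subtraction of $A_x$ by exploiting absorption relations such as $A_y=A_{xy}$, $BA_x=B$ and $A_xC_z=C_z$, and it computes $K_{xy}^{\id}$ and $K_{xyz}^{\id}$ by directly listing the very few surviving models instead of expanding the compounded conditions.
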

As before, we  prove this using Burnside's lemma:
\begin{equation}\label{K1-burnside}
 K=\frac 1 6 \sum_{\sigma \in \Sn_3} K^{\sigma},
\end{equation}
where $K^{\sigma}$ is the generating function of 0- or 1-dimensional models with no
unused step, left invariant by the permutation $\si$. Again, we only
need to determine $K ^{\id}$, $K^{(1,2)}$ and $K^{(1,2,3)}$.

\subsubsection{Small dimension, no prescribed symmetry: the series $\boldsymbol{K^{\id}}$}
\label{sec:1Dno}
By definition,  a model is at most one-dimensional if it suffices to
enforce one non-negativity condition to confine walks in the octant. We
denote by $K_x^{\id}$ the generating function of models (with no unused step) for which it
suffices to enforce the $x$-condition. We denote  similarly by $K_{xy}^{\id}$
the generating function of models for which it suffices to enforce the $x$-condition or
the $y$-condition, and adopt a similar notation $K_{xyz}^{\id}$ for models
for which it suffices to enforce any of the three
conditions. This is the case, for instance, if $\cS=\{\bone \bone
\bone, 111\}$. By inclusion-exclusion,
\begin{equation}\label{K1id-xyz}
K^{\id}= 3K_x^{\id}-3K_{xy}^{\id}+K_{xyz}^{\id}.
\end{equation}

\medskip
\paragraph{\bf The generating function $\boldsymbol {K_x^{\id}}$.} This polynomial count models
with no unused step satisfying Lemma~\ref{lem:1D}. To begin with, let us
count \emm all, models satisfying this lemma, or equivalently, one
(and exactly one) of the following conditions:
\begin{enumerate}
\item[1.]  there is no $y^-$ nor $z^-$ step,
\item[2.] there is no $y^-$ step, but there are $z^-$ steps and every
  step $ijk$ satisfies $k\ge i$,
\item[3.]  there 
is no $z^-$ step, but there are $y^-$ steps and every
  step $ijk$ satisfies $j\ge i$,
\item[4.]  there  are $y^-$ and $z^-$ steps and every
  step $ijk$ satisfies $j \ge i$ and $k\ge i$.
\end{enumerate}
Since exactly 11 of the 26 steps are neither $y^-$ nor $z^-$, the
generating function of models satisfying Condition~1 is $(1+u)^{11}=[11]$.  For
models satisfying Condition~2, there are also 11 admissible steps,
among which 9 are not $z^-$. Hence the associated generating function is
$[11]-[9]$. Of course, the argument and the series are the same for
Condition~3. Finally, there are 13 admissible steps for Condition~4,
among which 10 are not $z^-$, 10 are not $y^-$ and 8 are neither $y^-$
nor $z^-$. Hence the generating function of models satisfying Condition~4 is
$[13]-2[10]+[8]$. In total, this gives
$$
 [13]+3[11]-2[10]-2[9]+[8]
$$
for the generating function of 
models satisfying Lemma~\ref{lem:1D}. We now want to subtract those that
have unused steps. We use  the notation $A_x$ for the generating function of
models that satisfy Lemma~\ref{lem:1D} and Condition $(A_x)$ of
Lemma~\ref{lem:unused}. Similarly, $C_z$ counts models that satisfy Lemma~\ref{lem:1D}
  and Condition $(C_z)$ of Lemma~\ref{lem:unused}, and so on. Observe that
$$
A_y=A_{xy}, \quad BA_x=B,  \quad  BA_{yz}= A_{yz}, \quad C_x=0   \quad
\hbox{and} \quad A_xC_z=C_z.
$$
Moreover, $y$ and $z$ play symmetric roles. Hence, the
inclusion-exclusion formula reduces in this case to 
$$
K_x^{\id}=[13]+3[11]-2[10]-2[9]+[8] -A_x.
$$
Let us determine $A_x$, that is, the generating function of models that contain $x^-$
steps but no $x^+$ steps, and satisfy one of the 4 conditions above.
For those that satisfy Condition 1, we find the polynomial $[7]-[3]$
(because there are 7 steps that are neither $x^+$, nor $y^-$ nor
$z^-$, among which 3 are not $x^-$ either). For Condition~2, we obtain
$[9]-[7]$ and for Condition~3 as well, by symmetry. Finally, for
Condition~4 we obtain $[12]-2[9]+[7]$. Finally,
$$
A_x=[12]-[3].
$$
Hence
\begin{equation}\label{Kxid}
K_x^{\id}=[13]-[12]+3[11]-2[10]-2[9]+[8]+[3].
 \end{equation}

\medskip
\paragraph{\bf The generating function $\boldsymbol {K_{xy}^{\id}}$.} This polynomial
counts models with no unused step   that satisfy Lemma~\ref{lem:1D}
and the counterpart of Lemma~\ref{lem:1D} with $x$ replaced by
$y$. Rather than proceeding by inclusion-exclusion as above, we find
convenient to list all these models $\cS$:
\begin{itemize}
\item Either $\cS$ only consists of steps from $\{0,1\}^3\setminus \{000\}$; the generating function
  is then $[7]$.
\item If there is a $z^-$ step in $\cS$, then Lemma~\ref{lem:1D} and
  its $y$-counterpart imply that it must be $\bone\bone\bone$. Since this step is
  not unused, there must be an $x^+$ step $1jk$, but then
  Lemma~\ref{lem:1D} implies that it is $111$. There is no other $x^+$
  nor $y^+$ step. All remaining steps $ijk$ must satisfy $i=j\le k$,
  and so must be taken in $\{001, \bone\bone0, \bone\bone 1\}$. Hence
  the associated generating function is $u^2[3]$.
\item Otherwise there is no $z^-$ step, and there is, say, a $y^-$
  step. By Lemma~\ref{lem:1D}, it must be of the form $\bone\bone k$
  with $k=0,1$. Since this step is not unused, there must be also an
  $x^+$ step, which must be of the form $11\ell$, with $\ell=0,1$. The
  only other possible step is $001$. Hence the generating function for this case is
  $\left([2]-1\right)^2[1]$. 
\end{itemize}
In total,
$$
K^{\id}_{xy}=[7]+u^2[3]+\left([2]-1\right)^2[1].
$$

\medskip
\paragraph{\bf The generating function $\boldsymbol {K_{xyz}^{\id}}$.} This polynomial
counts models with no unused step   that satisfy Lemma~\ref{lem:1D}
and the counterparts of Lemma~\ref{lem:1D} with $x$ replaced by
$y$ and then by $z$. It suffices to select from the list established
above for $K_{xy}^{\id}$ the models satisfying also the latter
condition. One finds that only models with non-negative steps, and
also $\cS=\{\bone\bone\bone,111\}$ satisfy the required conditions, so
that
$$
K^{\id}_{xyz}=[7]+u^2.
$$
Returning to~\eqref{K1id-xyz}, we obtain the generating function of models of dimension at
most 1 having no unused steps:
\begin{equation}\label{K1idsol}
K^{\id}=3[13]-3[12]+9[11]-6[10]-6[9]+3[8]-2[7]+3[3]-3u^2[3]-3\left([2]-1\right)^2[1] +u^2.
\end{equation}

\subsubsection{Small dimension, $xy$-symmetry: the series $\boldsymbol{K^{(1,2)}}$}
We revisit the enumeration of Section~\ref{sec:1Dno} and enforce now an
$xy$-symmetry. The variables $x$ and $y$ still play the same role, but
$z$ plays a different role, and~\eqref{K1id-xyz} becomes
$$
K^{(1,2)}= 2 K_x^{(1,2)}+K_z^{(1,2)}-K_{xy}^{(1,2)}-2K_{xz}^{(1,2)}+
K_{xyz}^{(1,2)}.
$$
We note however that the $xy$-symmetry implies that
$$
K_x^{(1,2)}= K_{xy}^{(1,2)}\quad \hbox{and}\quad
K_{xz}^{(1,2)}=
K_{xyz}^{(1,2)},
$$
so that
\begin{equation}\label{K112-xyz}
K^{(1,2)}= K_{xy}^{(1,2)}+K_z^{(1,2)}-K_{xyz}^{(1,2)}.
\end{equation}

\medskip
\paragraph{\bf The generating function $\boldsymbol {K_z^{(1,2)}}$.}
Clearly, $K_z^{(1,2)}=K_x^{(2,3)}$. We
revisit the determination of $K_x^{\id}$ made in  Section~\ref{sec:1Dno}, but
restricting the enumeration to models with a $yz$-symmetry. We begin
with the models that satisfy Lemma~\ref{lem:1D} and have a
$yz$-symmetry. With Condition~1 we find the generating function
$[5][[3]]$. Conditions~2 and~3 do not satisfy $yz$-symmetry, so do not
contribute. Condition~4 contributes $[5][[4]]-[4][[2]]$. 

As in the determination of $K_x^{\id}$, it remains to subtract the
series $\overline{A_x}$ corresponding to models that contain $x^-$
steps, but no $x^+$ step. With Condition~1 we find
$[3][[2]]-[1][[1]]$, and with Condition~4, $[4][[4]]-[3][[2]]$.
Hence
$$
K_z^{(1,2)}= [5][[3]]+[5][[4]]-[4][[2]]
+[1][[1]]-[4][[4]].
$$

\medskip
\paragraph{\bf The generating function $\boldsymbol {K_{xy}^{(1,2)}}$.}
We now revisit the list of models used to determine $K_{xy}^{\id}$ in
Section~\ref{sec:1Dno}, now enforcing an  $xy$-symmetry. The generating function of
symmetric non-negative models is $[3][[2]]$. All listed models that
include a negative step are $xy$-symmetric. Hence
$$
K_{xy}^{(1,2)}= [3][[2]]+u^2[3]+ ([2]-1)^2 [1].
$$

\medskip
\paragraph{\bf The generating function $\boldsymbol {K_{xyz}^{(1,2)}}$.}
We now revisit the determination of $K_{xyz}^{\id}$ by  enforcing an
$xy$-symmetry. Besides non-negative models, counted by $[3][[2]]$, we
only have the model $\{\bone\bone\bone, 111\}$. Hence
$$
K_{xyz}^{(1,2)}= [3][[2]]+u^2.
$$

Returning to~\eqref{K112-xyz}, we obtain
\begin{equation}\label{K12sol}
K^{(1,2)}= [5][[3]]+[5][[4]]-[4][[2]]+[1][[1]]-[4][[4]] +u^2[3]+ ([2]-1)^2 [1]-u^2.
\end{equation}

\subsubsection{Small dimension, cyclic symmetry: the series $\boldsymbol{K^{(1,2,3)}}$}
We have finally reached the last, and simplest step of our
calculation. By symmetry, we have
$$
K^{(1,2,3)}=K_{xyz}^{(1,2,3)}.
$$
Hence we now revisit the determination of $K_{xyz}^{\id}$ by  enforcing a cyclic
symmetry. Besides non-negative models, counted by $[1]\langle 2\rangle$, we
only have the model $\{\bone\bone\bone, 111\}$. Hence
\begin{equation}\label{K123sol}
K^{(1,2,3)}=[1]\langle 2\rangle+u^2.
\end{equation}

We can now conclude the
\begin{proof}[Proof of Proposition~\ref{propA1}]
  Apply Burnside's formula~\eqref{K1-burnside} with $K^{\id}$, $K^{(1,2)}$ and
  $K^{(1,2,3)}$ respectively given  by~\eqref{K1idsol}, \eqref{K12sol} and ~\eqref{K123sol}.
\end{proof}

We have finally counted how many  models we need to study.

\begin{proof}[Proof of Proposition~\ref{prop:interesting}]
The series $I$ is $J-K$, with $J$ and $K$ given by Propositions~\ref{propA}
and~\ref{propA1} respectively.  
\end{proof}
\end{document}